\newtheorem{thm}{Theorem}[section]
\newtheorem{cor}[thm]{Corollary}
\newtheorem{lem}[thm]{Lemma}
\newtheorem{prop}[thm]{Proposition}
\newtheorem{ques}[thm]{Question}
\newtheorem{ex}[thm]{Example}
\newtheorem{rem}[thm]{Remark}
\theoremstyle{definition}
\newtheorem{defn}[thm]{Definition}
\numberwithin{equation}{section}
\newcommand{\N}{\mathbb N}
\def\R {\Bbb R}
\def\Z {\Bbb Z}
\def \ep {\epsilon}
\def \ra {\rightarrow}
\def \HUL {{\bf{HUL}}}
\begin{document}

\title{Lowering topological entropy over subsets}

\date{February 16, 2008}
\date{April 7, 2008}
\date{October 18, 2008}

\author{Wen Huang, Xiangdong Ye and Guohua Zhang}

\address{Department of Mathematics, University of Science and Technology of
China, Hefei, Anhui, 230026, P.R. China}

\email{wenh@mail.ustc.edu.cn, yexd@ustc.edu.cn}

\address{School of Mathematical Sciences, Fudan University, Shanghai 200433, China}

\email{zhanggh@fudan.edu.cn}

\subjclass[2000]{Primary: 37B40, 37A35, 37B10, 37A05.}

\keywords{lowerable, hereditarily lowerable, hereditarily uniformly
lowerable, asymptotically $h$-expansive, principal extension}

\thanks{The authors are supported by a grant
from Ministry of Education (20050358053), NSFC and  973 Project
(2006CB805903). The first author is supported by FANEDD (Grant No
200520) and the third author is supported by NSFC (10801035).}

\begin{abstract}
Let $(X, T)$ be a topological dynamical system (TDS), and $h (T, K)$
the topological entropy of a subset $K$ of $X$. $(X, T)$ is {\it
lowerable} if for each $0\le h\le h_{} (T, X)$ there is a non-empty
compact subset with entropy $h$; is {\it hereditarily lowerable} if
each non-empty compact subset is lowerable; is {\it hereditarily
uniformly lowerable} if for each non-empty compact subset $K$ and
each $0\le h\le h (T, K)$ there is a non-empty compact subset
$K_h\subseteq K$ with $h (T, K_h)= h$ and $K_h$ has at most one
limit point.

It is shown that each TDS with finite entropy is lowerable, and that
a TDS $(X, T)$ is hereditarily uniformly lowerable if and only if it
is asymptotically $h$-expansive.
\end{abstract}

\maketitle

%\tableofcontents

\markboth{lowering topological entropy over subsets}{Wen Huang,
Xiangdong Ye and Guohua Zhang}

\section{Introduction}

Throughout the paper, by a {\it topological dynamical system} (TDS)
$(X, T)$ we mean a compact metric space $X$ and a homeomorphism $T:
X\rightarrow X$ (in fact our main results hold for continuous maps,
see Appendix). Let $(X, T)$ be a TDS. It is an interesting question,
considered in \cite{SW} firstly, whether for any given $0\le h\le
h_{\text{top}} (T, X)$, there is a factor $(Y, S)$ of $(X, T)$ with
entropy $h$. We remark that the answer to this question in the
measure-theoretical setup is well known, but in the topological
setting the answer is not completely obtained yet. In \cite{SW} Shub
and Weiss presented an example with infinite entropy such that each
its non-trivial factor has infinite entropy. Moreover, Lindenstrauss
\cite{L1} showed that the question has an affirmative answer when
$X$ is finite-dimensional; and for an extension of non-trivial
minimal $\Z$-actions the question has an affirmative answer if it
has zero mean topological dimension \cite{L2} which includes
finite-dimensional systems, systems with finite entropy and uniquely
ergodic systems. For the definition and properties of mean
topological dimension see \cite{LW} by Lindenstrauss and Weiss.

Let $(X, T)$ be a TDS and $K\subseteq X$. Denote by $h (T, K)$ the
topological entropy of $K$. In this paper we study a question
similar to the above one. Namely, we consider the question if for
each $0\le h\le h_{} (T, X)$ there is a non-empty compact subset of
$X$ with entropy $h$. We remark that the question was motivated by
\cite{SW, L1, LW, L2} and the well-known result in fractal geometry
\cite{Fal,M} which states that if $K$ is a non-empty Borel subset
contained in $\R^n$ then for each $0\le h\le dim_H(K)$ there is a
Borel subset $K_h$ of $K$ with $dim_H(K_h)=h$, where $dim_H(*)$ is
the Hausdorff dimension of a subset $*$ of $\R^n$.

In \cite{YZ} Ye and Zhang introduced and studied the notion of
entropy points, and showed that for each non-empty compact subset
$K$ there is a countable compact subset $K_1\subseteq K$ with $h (T,
K_1)= h (T, K)$. Moreover, the subset can be chosen such that the
limit points of the subset has at most one limit point (for details
see \cite[Remark 5.13]{YZ}). Inspired by this fact we have the
following notions.

\begin{defn}
Let $(X, T)$ be a TDS. We say that $(X, T)$ is
\begin{enumerate}
\item {\it lowerable} if for each $0\le h\le h_{} (T,
X)$ there is a non-empty compact subset of $X$ with entropy $h$;

\item {\it hereditarily
lowerable} if each non-empty compact subset is lowerable, that
is, for each non-empty compact subset $K\subseteq X$ and each
$0\le h\le h (T, K)$ there is a non-empty compact subset $K_h$
of $K$ with entropy $h$;

\item {\it hereditarily
uniformly lowerable} (\HUL\ for short) if for each non-empty
compact subset $K$ and each $0\le h\le h (T, K)$ there is a
non-empty compact subset $K_h\subseteq K$ such that $h (T, K_h)=
h$ and $K_h$ has at most one limit point.
\end{enumerate}
\end{defn}

So our question can be divided further into the following questions.

\begin{ques} \label{q 1}
Is any TDS lowerable?
\end{ques}

\begin{ques} \label{q 2}
Is any TDS hereditarily lowerable?
\end{ques}

\begin{ques} \label{q 3}
For which TDS it is \HUL?
\end{ques}

We remark that lowering  entropy for factors is not the same as
lowering entropy for subsets. For example, in \cite{L1}
Lindenstrauss showed that each non-trivial factor of $([0,1]^\Z,
\sigma)$ has infinite entropy, where $\sigma$ is the shift. But
since $(\{0,1, \ldots, k\}^\Z, \sigma)$ can be embedded as a
sub-system of $([0,1]^\Z, \sigma)$ for any $k\ge 1$, it is clear
that $([0,1]^\Z, \sigma)$ is lowerable in our sense.

In this paper, we show that each TDS with finite entropy is
lowerable (this is also true when we talk about the dimensional
entropy of a subset), and that a TDS is \HUL\ iff it is
asymptotically $h$-expansive. In particular, each \HUL\ TDS has
finite entropy. Moreover, a principal extension preserves the
lowerable, hereditarily lowerable and \HUL\ properties. It is not
hard to construct examples with infinite entropy which are
hereditarily lowerable. Thus, there are TDSs which are hereditarily
lowerable but not \HUL. In fact, an example with the same property
is explored at the end of the paper, which has finite entropy. The
questions remain open if there are lowerable but not hereditarily
lowerable examples, or there are TDSs with infinite entropy which
are not lowerable. We should remark that if $([0,1]^\Z, \sigma)$ is
hereditarily lowerable then each finite dimensional TDS without
periodic points is hereditarily lowerable (see \cite{L2}), and if it
is not then it is a lowerable TDS with infinite entropy which is not
hereditarily lowerable. We also remark that if there exists a TDS
which is not lowerable (such a TDS, if exists, must have infinite
entropy) then we can obtain a lowerable TDS with infinite entropy
which is not hereditarily lowerable by considering the union of it
and $([0,1]^\Z, \sigma)$. There are also many other interesting
questions related to the topic.

\medskip
The paper is organized as follows. In section 2 the definitions of
topological entropy and dimensional entropy of subsets are given,
and some basic properties are discussed. In the following section
two distribution principles are stated which will be used in section
4, where it is shown that each TDS with finite entropy is lowerable
by using the principles and a conditional version of
Shannon-McMillan-Breiman Theorem. The next three sections are
devoted to prove that a TDS is \HUL\ iff it is asymptotically
$h$-expansive, and the main ingredients of which are some techniques
developed in \cite{YZ, BD, DS, LW1}. An example with finite entropy
which is hereditarily lowerable but not \HUL\ is presented at the
end of paper.

\medskip

We thank D. Feng \cite{F} for asking the question: whether each
non-empty compact subset is lowerable? His question gave us the
first motivation of the research. We also thank the referees of the
paper for their careful reading and useful suggestions which greatly
improved the writing of the paper.

\section{Preliminary}

The discussions in this section and next section proceed for a {\it
general TDS} (GTDS), by a GTDS $(X, T)$ we mean a compact metric
space $X$ and a continuous mapping $T: X\rightarrow X$.

\medskip

Let $(X, T)$ be a GTDS, $K\subseteq X$ and $\mathcal{W}$ a family of
subsets of $X$. Set $\text{diam} (K)$ to be the diameter of $K$ and
put $||\mathcal{W}||= \sup \{\text{diam} (W): W\in \mathcal{W}\}$.
We shall write $K\succeq \mathcal{W}$ if $K\subseteq W$ for some
$W\in \mathcal{W}$ and else $K\nsucceq \mathcal{W}$. If
$\mathcal{W}_1$ is another family of subsets of $X$, $\mathcal{W}$
is said to be {\it finer} than $\mathcal{W}_1$ (we shall write
$\mathcal{W}\succeq \mathcal{W}_1$) when $W\succeq \mathcal{W}_1$
for each $W\in \mathcal{W}$. We shall say that a numerical function
{\it increases} (resp. {\it decreases}) with respect to (w.r.t.) a
set variable $K$ or a family variable $\mathcal{W}$ if the value
never decreases (resp. increases) when $K$ is replaced by a set
$K_1$ with $K_1\subseteq K$ or when $\mathcal{W}$ is replaced by a
family $\mathcal{W}_1$ with $\mathcal{W}_1\succeq \mathcal{W}$. By a
{\it cover} of $X$ we mean a finite family of Borel subsets with
union $X$ and a {\it partition} a cover whose elements are disjoint.
Denote by $\mathcal{C}_X$ (resp. $\mathcal{C}^o_X$, $\mathcal{P}_X$)
the set of covers (resp. open covers, partitions). Observe that if
$\mathcal{U}\in \mathcal{C}^o_X$ then $\mathcal{U}$ has a Lebesgue
number $\lambda> 0$ and so $\mathcal{W}\succeq \mathcal{U}$ when
$||\mathcal{W}||< \lambda$. If $\alpha\in \mathcal{P}_X$ and $x\in
X$ then let $\alpha (x)$ be the element of $\alpha$ containing $x$.
Given $\mathcal{U}_1, \mathcal{U}_2\in \mathcal{C}_X$, set
$\mathcal{U}_1\vee \mathcal{U}_2= \{U_1\cap U_2: U_1\in
\mathcal{U}_1, U_2\in \mathcal{U}_2\}$, obviously $\mathcal{U}_1\vee
\mathcal{U}_2\in \mathcal{C}_X$ and $\mathcal{U}_1\vee
\mathcal{U}_2\succeq \mathcal{U}_1$. $\mathcal{U}_1\succeq
\mathcal{U}_2$ need not imply that $\mathcal{U}_1\vee \mathcal{U}_2=
\mathcal{U}_1$, $\mathcal{U}_1\succeq \mathcal{U}_2$ iff
$\mathcal{U}_1$ is equivalent to $\mathcal{U}_1\vee \mathcal{U}_2$
in the sense that each refines the other. For each $\mathcal{U}\in
\mathcal{C}_X$ and any $m, n\in \mathbb{Z}_+$ with $m\le n$ we set
$\mathcal{U}_m^n= \bigvee_{i= m}^n T^{- i} \mathcal{U}$.

The following obvious fact will be used in several places and is
easy to check.

\begin{lem}
Let $\mathcal{V}\in \mathcal{C}^o_X$ and $\{\mathcal{U}_n: n\in
\N\}\subseteq \mathcal{C}_X$. If $||\mathcal{U}_n||\rightarrow 0$ as
$n\rightarrow +\infty$ then there exists $n_0\in \N$ such that
$\mathcal{U}_n\succeq \mathcal{V}$ for each $n\ge n_0$.
\end{lem}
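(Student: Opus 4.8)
The plan is to reduce the statement to the Lebesgue number lemma, precisely as foreshadowed in the remark immediately preceding it. First I would fix a Lebesgue number: since $X$ is a compact metric space and $\mathcal{V}\in \mathcal{C}^o_X$ is a finite \emph{open} cover, the Lebesgue number lemma yields a number $\lambda>0$ such that every subset $W\subseteq X$ with $\text{diam}(W)<\lambda$ lies in some member of $\mathcal{V}$; in the notation introduced above, $W\succeq \mathcal{V}$ whenever $\text{diam}(W)<\lambda$.

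With $\lambda$ fixed, I would let the hypothesis $||\mathcal{U}_n||\rightarrow 0$ do the rest. By definition of the limit there is an $n_0\in \N$ with $||\mathcal{U}_n||<\lambda$ for all $n\ge n_0$. Then for any such $n$ and any $U\in \mathcal{U}_n$ one has $\text{diam}(U)\le ||\mathcal{U}_n||<\lambda$, whence $U\succeq \mathcal{V}$ by the choice of $\lambda$. Since this holds for every $U\in \mathcal{U}_n$, the definition of $\succeq$ between families gives $\mathcal{U}_n\succeq \mathcal{V}$, which is exactly the asserted conclusion.

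I expect no serious obstacle here: the single non-formal ingredient is the existence of a Lebesgue number, which rests on compactness of $X$ together with the openness of $\mathcal{V}$. The one point I would be careful to record is where each hypothesis is used. The cover $\mathcal{V}$ must be open, since a Lebesgue number need not exist for an arbitrary Borel cover; by contrast the $\mathcal{U}_n$ may be arbitrary members of $\mathcal{C}_X$, as only an upper bound on the diameters of their elements is needed and no topological regularity of the $\mathcal{U}_n$ enters.
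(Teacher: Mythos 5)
Your proof is correct and is exactly the argument the paper has in mind: the lemma is stated there as an ``obvious fact'' with no written proof, but the sentence immediately preceding it records precisely your key step, namely that an open cover $\mathcal{V}\in \mathcal{C}^o_X$ has a Lebesgue number $\lambda>0$ so that $\mathcal{W}\succeq \mathcal{V}$ whenever $||\mathcal{W}||<\lambda$. Your additional remark on where openness of $\mathcal{V}$ is needed (while the $\mathcal{U}_n$ may be arbitrary Borel covers) is accurate and consistent with the paper's usage.
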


\subsection{Topological entropy of subsets}\

Let $(X, T)$ be a GTDS, $K\subseteq X$ and $\mathcal{U}\in
\mathcal{C}_X$. Set $N (\mathcal{U}, K)$ to be the minimal
cardinality of sub-families $\mathcal{V}\subseteq \mathcal{U}$ with
$\cup \mathcal{V}\supseteq K$, where $\cup\mathcal{V}= \bigcup_{V\in
\mathcal{V}}V$. We write $N(\mathcal{U},\emptyset)=0$ by convention.
Obviously, $N (\mathcal{U}, T (K))= N (T^{- 1} \mathcal{U}, K)$. Let
\begin{equation*}
h_\mathcal{U} (T, K)= \limsup_{n\rightarrow +\infty} \frac{1}{n}
\log N (\mathcal{U}_0^{n- 1}, K).
\end{equation*}
Clearly $h_\mathcal{U} (T, K)$ increases w.r.t. $\mathcal{U}$.
Define the {\it topological entropy of $K$} by
$$h (T, K)= \sup_{\mathcal{U}\in \mathcal{C}^o_X} h_\mathcal{U} (T, K),$$
 and define the {\it
topological entropy of $(X, T)$} by $h_{\text{top}} (T, X)= h (T,
X)$.

Let $Z$ be a topological metric space and $f: Z\rightarrow [-
\infty, + \infty]$ a generalized real-valued function on $Z$. The
function $f$ is called {\it upper semi-continuous} (u.s.c. for
short) if $\{z\in Z: f (z)\ge r\}$ is a closed subset of $Z$ for
each $r\in \mathbb{R}$, equivalently,
\begin{equation*}
\limsup_{z'\rightarrow z} f (z')\le f (z)\ \text{for each}\ z\in Z.
\end{equation*}
Thus, the infimum of any family of u.s.c. functions is again a
u.s.c. one, both the sum and supremum of finitely many u.s.c.
functions are u.s.c. ones. In particular, the infimum of any family
of continuous functions is a u.s.c. function.

Let $(X, T)$ be a GTDS and $2^X$ its hyperspace, that is, $$2^X=
\{K: K\ \text{is a non-empty compact subset of}\ X\}.$$ We endow the
Hausdorff metric on $2^X$. Then $T$ induces a continuous mapping
$\widehat{T}$ on $2^X$ by $\widehat{T} (K)= T K$. The {\it entropy
hyper-function} $H: 2^X\rightarrow [0, h_{\text{top}} (T, X)]$ of
$(X, T)$ is defined by $H (K)= h (T, K)$ for $K\in 2^X$. Then we
have the follow results.

\begin{prop} \label{hyper-function-measurable}
Let $(X, T)$ be a GTDS and $\mathcal{U}\in \mathcal{C}^o_X$. Then
\begin{enumerate}

\item
$h_\mathcal{U} (T, K)= h_\mathcal{U} (T, T K)$ for any $K\subseteq
X$. Moreover, the entropy hyper-function $H$ is
$\widehat{T}$-invariant.

\item The function
$h_\mathcal{U} (T, \bullet)$ is Borel measurable on $2^X$.

\item The entropy hyper-function $H$ is Borel measurable.
\end{enumerate}
\end{prop}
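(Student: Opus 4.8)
The plan is to establish the three parts in order, deriving the measurability of $H$ from that of the individual functions $h_\mathcal{U}(T,\bullet)$.

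For (1) I would compute directly from the definition. Using the identity $N(\mathcal{U}, TK)= N(T^{-1}\mathcal{U}, K)$ together with $T^{-1}\mathcal{U}_0^{n-1}= \mathcal{U}_1^n$, one gets $N(\mathcal{U}_0^{n-1}, TK)= N(\mathcal{U}_1^n, K)$. The covers $\mathcal{U}_1^n$, $\mathcal{U}_0^n$ and $\mathcal{U}_0^{n-1}$ are comparable: since $\mathcal{U}_0^n= \mathcal{U}\vee \mathcal{U}_1^n$ and $N$ is submultiplicative under $\vee$, I have $N(\mathcal{U}_1^n, K)\le N(\mathcal{U}_0^n, K)\le N(\mathcal{U}, K)\, N(\mathcal{U}_1^n, K)$. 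Dividing by $n$, taking logarithms and then $\limsup$, the bounded factor $\frac{1}{n}\log N(\mathcal{U}, K)\to 0$ drops out and the index shift $n\mapsto n+1$ is absorbed (as $\frac{n+1}{n}\to 1$), yielding $h_\mathcal{U}(T, TK)= h_\mathcal{U}(T, K)$. Taking the supremum over $\mathcal{U}\in \mathcal{C}^o_X$ then gives $H(\widehat{T}K)= H(K)$.

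For (2) it suffices to show that for each $\mathcal{V}\in \mathcal{C}^o_X$ the integer-valued function $K\mapsto N(\mathcal{V}, K)$ is Borel on $2^X$: granting this, each $\frac{1}{n}\log N(\mathcal{U}_0^{n-1}, \bullet)$ is Borel (note $\mathcal{U}_0^{n-1}\in \mathcal{C}^o_X$ by continuity of $T$), and $h_\mathcal{U}(T,\bullet)$, being their $\limsup$, is Borel. The crucial observation is that for any open $U\subseteq X$ the set $\{K\in 2^X: K\subseteq U\}$ is open in the Hausdorff metric: if a compact $K\subseteq U$ then $d(K, X\setminus U)> 0$, so every $K'$ sufficiently close to $K$ remains inside $U$. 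Since $\mathcal{V}$ is finite with open elements, $\{K: N(\mathcal{V}, K)\le m\}$ is the finite union over $m$-element subfamilies $\mathcal{W}\subseteq \mathcal{V}$ of the open sets $\{K: K\subseteq \cup\mathcal{W}\}$, hence open; thus $N(\mathcal{V}, \bullet)$ is upper semi-continuous, in particular Borel.

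For (3) the only issue is that $H$ is an a priori uncountable supremum $\sup_{\mathcal{U}\in \mathcal{C}^o_X} h_\mathcal{U}(T, K)$. I would fix a sequence $\{\mathcal{U}_k\}\subseteq \mathcal{C}^o_X$ with $||\mathcal{U}_k||\to 0$. By the preceding Lemma, every $\mathcal{V}\in \mathcal{C}^o_X$ satisfies $\mathcal{U}_k\succeq \mathcal{V}$ for all large $k$, and since $h_\mathcal{U}(T, K)$ increases w.r.t. $\mathcal{U}$ this forces $\sup_k h_{\mathcal{U}_k}(T, K)\ge h_\mathcal{V}(T, K)$ for every $\mathcal{V}$. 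Hence $H= \sup_k h_{\mathcal{U}_k}(T, \bullet)$, a countable supremum of the Borel functions from (2), and is therefore Borel.

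The main obstacle is the measurability statement in (2); everything else is either a direct $\limsup$ computation (part 1) or a soft reduction to a countable family (part 3). The key point that makes (2) go through — and the reason the definition restricts the supremum to open covers — is that $N(\mathcal{V}, \bullet)$ has open sublevel sets precisely because the elements of $\mathcal{V}$, and hence their finite unions, are open. For a cover by general Borel sets this argument would break down and the measurability of $K\mapsto N(\mathcal{V}, K)$ would be far less transparent.
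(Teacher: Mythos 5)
Your proposal is correct and takes essentially the same route as the paper: the key fact in (2) --- that $N(\mathcal{V},\bullet)$ is upper semi-continuous on $2^X$, which you verify via the Hausdorff-openness of $\{K\in 2^X: K\subseteq U\}$ for open $U$ --- is precisely the fact the paper's proof cites, and your countable reduction in (3) using covers $\{\mathcal{U}_k\}$ with $||\mathcal{U}_k||\rightarrow 0$ is exactly the reduction recorded in the paper's preliminaries. You merely supply the details the paper compresses into ``(1) is clear'' and ``(3) comes from (2),'' including a complete and correct verification of (1) via $N(\mathcal{U}_0^{n-1},TK)=N(\mathcal{U}_1^n,K)$ and submultiplicativity of $N$ under joins.
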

\begin{proof}
(1) is clear. (2) follows from the following fact that for any
$\mathcal{V}\in \mathcal{C}_X^o$, $N(\mathcal{V}, \bullet): K\in
2^X\mapsto N(\mathcal{V},K)$ is a u.s.c function on $2^X$.  (3)
comes from (2).
\end{proof}

We may also obtain the topological entropy of subsets using Bowen's
separated and spanning sets (see \cite[P$_{168-174}$]{Wa}). Let $(X,
T)$ be a TDS with $d$ a metric on $X$. For each $n\in \mathbb{N}$ we
define a new metric $d_n$ on $X$ by
$$d_n (x, y)= \max_{0\le i\le n- 1}d (T^i x, T^i y).$$ Let
$\epsilon> 0$ and $K\subseteq X$. A subset $F$ of $X$ is said to
{\it $(n,\epsilon)$-span $K$ w.r.t. $T$} if for each $ x\in K$,
there is $y\in F$ with $d_n(x,y)\le\epsilon$; a subset $E$ of $K$ is
said to be {\it $(n,\epsilon)$-separated w.r.t. $T$} if $x,y\in E,
x\neq y$ implies $d_n(x,y)>\epsilon$. Let $r_n(d,T,\epsilon,K)$
denote the smallest cardinality of any $(n,\epsilon)$-spanning set
for $K$ w.r.t. $T$ and $s_n(d,T,\epsilon,K)$ denote the largest
cardinality of any $(n,\epsilon)$-separated subset of $K$ w.r.t.
$T$. We write $r_n(d,T,\epsilon,\emptyset)=s_n (d, T, \epsilon,
\emptyset)=0$ by convention. Put
$$r (d, T, \epsilon, K)= \limsup_{n\rightarrow +\infty} \frac{1}{n}
\log r_n (d, T, \epsilon, K)$$ and
$$ s (d, T, \epsilon, K)= \limsup_{n\rightarrow +\infty} \frac{1}{n}
\log s_n (d, T, \epsilon, K).$$ Then put $$h_* (d, T, K)=
\lim_{\epsilon\rightarrow 0+} r (d, T, \epsilon, K)\ \text{and}\ h^*
(d, T, K)= \lim_{\epsilon\rightarrow 0+} s (d, T, \epsilon, K).$$ It
is  well known that $h_* (d, T, K)= h^* (d, T, K)$ is independent of
the choice of a compatible metric $d$ on the space $X$. Now, if
$\mathcal{U}\in \mathcal{C}^o_X$ has a Lebesgue number $\delta> 0$
then, for any $\delta'\in (0, \frac{\delta}{2})$ and each
$\mathcal{V}\in \mathcal{C}^o_X$ with $||\mathcal{V}||\le \delta'$,
one has
$$N(\mathcal{U}_0^{n- 1}, K)\le r_n (d, T, \delta', K)\le s_n (d, T,
\delta', K)\le N(\mathcal{V}_0^{n- 1}, K)$$ for each $n\in
\mathbb{N}$. So if $\{\mathcal{U}_n\}_{n\in \mathbb{N}}\subseteq
\mathcal{C}^o_X$ satisfies $||\mathcal{U}_n||\rightarrow 0$ as
$n\rightarrow +\infty$ then
\begin{equation*}
h_* (d, T, K)=h^* (d, T, K)=\lim_{n\rightarrow +\infty}
h_{\mathcal{U}_n} (T, K)= h (T, K).
\end{equation*}
It is also obvious that $h (T, \overline{K})= h (T, K)$.

\subsection{Dimensional entropy of subsets}\

In the process of proving that each TDS with finite entropy is
lowerable, we shall use some concept named dimensional entropy of
subsets, which is another kind of topological entropy introduced and
studied in \cite{B3}. Let's see how to define it.

Let $(X, T)$ be a GTDS and $\mathcal{U}\in \mathcal{C}_X$. For
$K\subseteq X$ let
\[
n_{T, \mathcal{U}} (K)=\left\{
\begin{array}{ll}
0, &\mbox{if $K\nsucceq \mathcal{U}$};\\
+\infty, &\mbox{if $T^i K\succeq \mathcal{U}$ for all $i\in \mathbb{Z}_+$};\\
k, &\mbox{$k=\max\{j\in\N: T^i(K)\succeq \mathcal{U}\ \text{for
each}\ 0\le i\le j-1 \}$}.
\end{array}
\right.
\]
For $k\in \mathbb{N}$, we define
\begin{eqnarray*}
\mathfrak{C} (T,\mathcal{U},K, k)&=&
 \{ \mathcal{E}:\mathcal{E}\ \text{is a countable family of
subsets of}\ X   \\
&&\hskip1.5cm \text{ such that} K\subseteq \cup \mathcal{E} \text{
and } \mathcal{E}\succeq \mathcal{U}_0^{k-1} \}.
\end{eqnarray*}
Then for each $\lambda\in \mathbb{R}$ set
\begin{equation*}
m_{T, \mathcal{U}} (K, \lambda,k)=\inf_{\mathcal{E}\in
\mathfrak{C} (T, \mathcal{U},K, k)} m (T, \mathcal{U},
\mathcal{E}, \lambda),
\end{equation*}
where $m (T, \mathcal{U}, \mathcal{E}, \lambda)= \sum_{E\in
\mathcal{E}} e^{-\lambda n_{T, \mathcal{U}} (E)}$ and we write $m
(T,\mathcal{U}, \emptyset,\lambda)=0$ by convention. As
$m_{T,\mathcal{U}}(K,\lambda,k)$ is decreasing w.r.t.
$k$, we can define
$$m_{T,\mathcal{U}}(K,\lambda)=\lim_{k\rightarrow +\infty}
m_{T, \mathcal{U}} (K, \lambda,k).$$ Notice that
$m_{T,\mathcal{U}}(K,\lambda)\le m_{T,\mathcal{U}}(K,\lambda')$ for
$\lambda\ge \lambda'$ and $m_{T, \mathcal{U}} (K, \lambda)\notin
\{0, +\infty\}$ for at most one $\lambda$ \cite{B3}. We define {\it
the dimensional entropy of $K$ relative to $\mathcal{U}$} by
\begin{equation*}
h^B_\mathcal{U} (T, K)= \inf \{\lambda\in \mathbb{R}: m_{T, \mathcal{U}}
(K, \lambda)= 0\}= \sup \{\lambda \in \mathbb{R}: m_{T, \mathcal{U}} (K,
\lambda)= +\infty\}.
\end{equation*}
The {\it dimensional entropy of $K$} is defined by
$$h^B (T, K)= \sup_{\mathcal{U}\in \mathcal{C}^o_X} h_\mathcal{U}^B (T, K).$$
Note that $h_{\mathcal{U}}^B (T, K)$ increases w.r.t.
$\mathcal{U}\in \mathcal{C}_X$, thus if $\{\mathcal{U}_n\}_{n\in
\mathbb{N}}\subseteq \mathcal{C}^o_X$ satisfies $\lim_{n\rightarrow
+\infty} ||\mathcal{U}_n||=0$ then $\lim_{n\rightarrow +\infty}
h^B_{\mathcal{U}_n} (T, K)=h^B (T, K)$.

The following results are elementary (see for example
\cite[Propositions 1 and 2]{B3}).

\begin{prop} \label{06.02.28}
Let $(X, T)$ be a GTDS, $K_1, K_2, \cdots, K\subseteq X$ and
$\mathcal{U}\in \mathcal{C}_X$. Then
\begin{enumerate}

\item $h_\mathcal{U} (T, X)= h^B_\mathcal{U} (T, X)$ if $\mathcal{U}
\in \mathcal{C}_X^o$, so $h (T, X)= h^B (T, X)$.

\item $h^B_\mathcal{U} (T, \bigcup_{n\in \mathbb{N}}
K_n)= \sup_{n\in \mathbb{N}} h^B_\mathcal{U} (T, K_n)$, so
$$h^B (T,
\bigcup_{n\in \mathbb{N}} K_n)= \sup_{n\in \mathbb{N}} h^B (T,
K_n).$$

\item For each $m\in \mathbb{N}$ and $i\ge 0$,
$h^B_{T^{-i}\mathcal{U}} (T^m, K)\ge h^B_\mathcal{U} (T^m, T^iK)$,
so $h^B (T^m, K)\ge h^B (T^m, T^i K)$.

\item For each $m\in
\mathbb{N}$, $h^B_{\mathcal{U}_0^{m- 1}} (T^m, K)= m
h^B_\mathcal{U} (T, K)$, so $h^B (T^m, K)= m h^B (T, K)$.

\end{enumerate}
\end{prop}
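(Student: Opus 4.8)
The plan is to reduce everything to one bookkeeping observation: for a subset $E\subseteq X$ and $k\in\mathbb{N}$ one has $n_{T,\mathcal{U}}(E)\ge k$ if and only if $E\succeq\mathcal{U}_0^{k-1}$, so that a countable cover $\mathcal{E}$ of $K$ lies in $\mathfrak{C}(T,\mathcal{U},K,k)$ exactly when $n_{T,\mathcal{U}}(E)\ge k$ for every $E\in\mathcal{E}$. Granting this, parts (3) and (4) become purely combinatorial once one computes how $n$ transforms. For (3) I would check the pointwise identity $n_{T^m,T^{-i}\mathcal{U}}(E)=n_{T^m,\mathcal{U}}(T^iE)$, since both sides assert $T^{ml+i}E\succeq\mathcal{U}$ for $0\le l\le j-1$; sending $\mathcal{E}\mapsto\{T^iE:E\in\mathcal{E}\}$ then carries an admissible family for $(T^m,T^{-i}\mathcal{U},K)$ to one for $(T^m,\mathcal{U},T^iK)$ without increasing the weight, giving $m_{T^m,\mathcal{U}}(T^iK,\lambda)\le m_{T^m,T^{-i}\mathcal{U}}(K,\lambda)$ for every $\lambda$ and hence the claimed inequality of critical exponents. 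For (4) the corresponding identity is $n_{T^m,\mathcal{U}_0^{m-1}}(E)\ge j\iff n_{T,\mathcal{U}}(E)\ge mj$, i.e. $n_{T^m,\mathcal{U}_0^{m-1}}(E)=\lfloor n_{T,\mathcal{U}}(E)/m\rfloor$; this makes the level-$mj$ class for $(T,\mathcal{U})$ and the level-$j$ class for $(T^m,\mathcal{U}_0^{m-1})$ literally the \emph{same} family of covers, and the two-sided estimate $e^{-\lambda(m-1)}e^{-m\lambda\,n_{T^m,\mathcal{U}_0^{m-1}}(E)}\le e^{-\lambda n_{T,\mathcal{U}}(E)}\le e^{-m\lambda\,n_{T^m,\mathcal{U}_0^{m-1}}(E)}$ (valid for $\lambda\ge 0$) pins $m_{T,\mathcal{U}}(K,\lambda)$ between fixed multiples of $m_{T^m,\mathcal{U}_0^{m-1}}(K,m\lambda)$. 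Vanishing is therefore simultaneous, and setting $\mu=m\lambda$ in the critical exponents yields $h^B_{\mathcal{U}_0^{m-1}}(T^m,K)=m\,h^B_\mathcal{U}(T,K)$.

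Part (2) is the countable-stability (Hausdorff-dimension-type) property. The inequality $\sup_n h^B_\mathcal{U}(T,K_n)\le h^B_\mathcal{U}(T,\bigcup_n K_n)$ is immediate from monotonicity, since a cover of $\bigcup_n K_n$ also covers each $K_n$ and admissibility is independent of the set covered, so $\mathfrak{C}(T,\mathcal{U},\bigcup_n K_n,k)\subseteq\mathfrak{C}(T,\mathcal{U},K_n,k)$. For the reverse I would fix $\lambda>\sup_n h^B_\mathcal{U}(T,K_n)$, so that $m_{T,\mathcal{U}}(K_n,\lambda)=0$ for every $n$; given $\varepsilon>0$, choose for each $n$ an admissible family $\mathcal{E}_n$ covering $K_n$ with $m(T,\mathcal{U},\mathcal{E}_n,\lambda)<\varepsilon 2^{-n}$, and observe that $\mathcal{E}=\bigcup_n\mathcal{E}_n$ is a countable cover of $\bigcup_n K_n$, still admissible (each element has $n_{T,\mathcal{U}}\ge 1$), with total weight $<\varepsilon$. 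Hence $m_{T,\mathcal{U}}(\bigcup_n K_n,\lambda)=0$ and $h^B_\mathcal{U}(T,\bigcup_n K_n)\le\lambda$. The global versions in (2), (3) and (4) then follow by taking suprema over $\mathcal{U}\in\mathcal{C}^o_X$, using that $h^B_\mathcal{U}$ increases with $\mathcal{U}$ and that the refined covers $\mathcal{U}_0^{m-1}$ (resp. the covers $T^{-i}\mathcal{U}$) are cofinal among open covers.

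Part (1) is the only substantive point, and the hard part. The easy inequality $h^B_\mathcal{U}(T,X)\le h_\mathcal{U}(T,X)$ I would get by using, at each level $k$, a minimal subcover $\mathcal{E}_k\subseteq\mathcal{U}_0^{k-1}$ of $X$ as a test family: every member is a depth-$k$ cylinder, so $n_{T,\mathcal{U}}\ge k$ and $m_{T,\mathcal{U}}(X,\lambda,k)\le N(\mathcal{U}_0^{k-1},X)e^{-\lambda k}$, which tends to $0$ whenever $\lambda>h_\mathcal{U}(T,X)$; here I use the subadditivity $N(\mathcal{U}_0^{m+n-1},X)\le N(\mathcal{U}_0^{m-1},X)N(\mathcal{U}_0^{n-1},X)$, so that $h_\mathcal{U}(T,X)=\lim_n\frac{1}{n}\log N(\mathcal{U}_0^{n-1},X)=\inf_n\frac{1}{n}\log N(\mathcal{U}_0^{n-1},X)$. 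The reverse inequality $h^B_\mathcal{U}(T,X)\ge h_\mathcal{U}(T,X)$ is where the real work lies: one must bound below the weight of \emph{every} admissible countable cover of $X$. The obstacle is that enlarging an element $E$ to an open cylinder $W_E\in\mathcal{U}_0^{n(E)-1}$ containing it can only raise $n_{T,\mathcal{U}}$ and hence lower its weight, so naively refining all the $W_E$ to a common top level $\mathcal{U}_0^{n^*-1}$ overcharges by factors of $|\mathcal{U}|$ and is far too lossy. My plan is instead to follow Bowen: reduce to a finite subcover by compactness (the $W_E$ are open since $\mathcal{U}$ is), and either run the level-by-level counting that reconstructs a subcover of $\mathcal{U}_0^{n-1}$ with controlled multiplicity, or apply the mass-distribution principle, exhibiting a measure $\mu$ on $X$ with $\mu(E)\le Ce^{-\lambda n_{T,\mathcal{U}}(E)}$ for $\lambda$ just below $h_\mathcal{U}(T,X)$ (built from a measure of nearly maximal entropy via Shannon--McMillan--Breiman), which forces $m_{T,\mathcal{U}}(X,\lambda)\ge\mu(X)/C>0$ and hence $h^B_\mathcal{U}(T,X)\ge\lambda$. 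This identification of the two entropies on the whole space is precisely Bowen's theorem, and I expect it to be the main technical hurdle.
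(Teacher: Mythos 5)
Your treatment of (2)--(4) is correct and essentially coincides with the paper's own proof: your conjugation identity $n_{T^m,T^{-i}\mathcal{U}}(E)=n_{T^m,\mathcal{U}}(T^iE)$ together with the image family $T^i(\mathcal{E})$ is exactly how the paper proves (3), and your floor identity $n_{T^m,\mathcal{U}_0^{m-1}}(E)=\left[n_{T,\mathcal{U}}(E)/m\right]$ is the paper's computation for (4); your two-sided pinning $e^{-\lambda(m-1)}m_{T^m,\mathcal{U}_0^{m-1}}(K,m\lambda)\le m_{T,\mathcal{U}}(K,\lambda)\le m_{T^m,\mathcal{U}_0^{m-1}}(K,m\lambda)$ is in fact a tidier packaging, since the paper proves one inequality and dismisses the reverse as ``similar discussions.'' Your (2) supplies a proof where the paper says ``obvious,'' and it is right up to one bookkeeping slip: admissibility of $\bigcup_n\mathcal{E}_n$ at level $k$ requires every $\mathcal{E}_n$ to lie in $\mathfrak{C}(T,\mathcal{U},K_n,k)$ for a \emph{common} $k$, not merely that each element has $n_{T,\mathcal{U}}\ge 1$ as your parenthetical asserts. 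The fix is immediate: since $\mathfrak{C}(T,\mathcal{U},K_n,k)$ only shrinks as $k$ grows, $m_{T,\mathcal{U}}(K_n,\lambda,k)\le m_{T,\mathcal{U}}(K_n,\lambda)=0$ for every $k$, so for each fixed $k$ you may choose all the $\mathcal{E}_n$ at level $k$ with weight $<\varepsilon 2^{-n}$, conclude $m_{T,\mathcal{U}}(\bigcup_n K_n,\lambda,k)\le\varepsilon$, and let $k\to+\infty$.

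The genuine gap is in (1). The paper does not prove it either: it is cited verbatim as \cite[Proposition 1]{B3}, and your easy half $h^B_\mathcal{U}(T,X)\le h_\mathcal{U}(T,X)$ (which duplicates the paper's Bridge Lemma at $K=X$) is fine. But neither of your sketched routes for the hard half $h^B_\mathcal{U}(T,X)\ge h_\mathcal{U}(T,X)$ is a proof as it stands, and the mass-distribution branch would \emph{fail} in the form stated: a measure $\mu$ of nearly maximal entropy controls, via Shannon--McMillan--Breiman, only the atoms $\alpha_0^{n-1}(x)$ of a partition $\alpha\succeq\mathcal{U}$, whereas an element of $\mathcal{U}_0^{n-1}$ can meet up to $M^n$ such atoms ($M$ the multiplicity of $\mathcal{U}$ relative to $\alpha$), so the bound $\mu(E)\le Ce^{-\lambda n_{T,\mathcal{U}}(E)}$ is only available with $\lambda\le h_\mathcal{U}(T,X)-\log M$ --- precisely the $\log M$ defect visible in the paper's Lemma \ref{non-uniform}. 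Erasing that defect requires Bowen's Lemma 2 (partitions $\alpha_{n,k}\succeq\mathcal{U}_0^{n-1}$ whose overlap multiplicity is only $n\#\mathcal{U}$) applied to the powers $T^n$, so that $\frac{1}{n}\log(n\#\mathcal{U})\to 0$; this is exactly the device the paper deploys in Proposition \ref{more estimate-not measurable}(2) for the analogous lower bound. Your other branch --- the concatenation/counting argument reconstructing subcovers of $\mathcal{U}_0^{N-1}$ from a finite admissible cover --- is Bowen's actual proof, but you only name it. So either execute that counting argument in full, or do what the paper does and simply invoke \cite[Proposition 1]{B3}; as written, part (1) of your proposal is a correct diagnosis with an incomplete (and, on one branch, unworkable) prescription.
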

\begin{proof}
(1) is \cite[Proposition 1]{B3}. (2) is obvious.

(3) Let $m\in \mathbb{N}$ and $i\ge 0$. Assume $k\in \mathbb{N}$ and
$\lambda>0$. If $\mathcal{E}\in \mathfrak{C}
(T^m,T^{-i}\mathcal{U},K, k)$ then $n_{T^m,\mathcal{U}}(T^i
E)=n_{T^m,T^{-i}\mathcal{U}}(E)\ge k$ for each $E\in \mathcal{E}$
and so
$$T^i(\mathcal{E})\doteq \{ T^iE:E\in \mathcal{E}\}\in \mathfrak{C}
(T^m,\mathcal{U},T^iK, k),$$ thus
\begin{eqnarray*}
m_{T^m, \mathcal{U}}(T^iK, \lambda,k)&\le & m (T^m,
\mathcal{U}, T^i (\mathcal{E}), \lambda)= \sum_{E\in \mathcal{E}}
e^{-\lambda n_{T^m, \mathcal{U}}
(T^iE)}\\
&=& \sum_{E\in \mathcal{E}} e^{-\lambda n_{T^m,
T^{-i}\mathcal{U}}(E)}= m (T^m, T^{-i}\mathcal{U}, \mathcal{E},
\lambda),
\end{eqnarray*}
which implies $m_{T^m, \mathcal{U}}(T^iK, \lambda, k)\le
m_{T^m, T^{-i}\mathcal{U}} (K, \lambda,k)$ as $\mathcal{E}$
is arbitrary. Letting $k\rightarrow +\infty$ we get $m_{T^m,
\mathcal{U}}(T^iK, \lambda)\le m_{T^m, T^{-i}\mathcal{U}}(K,
\lambda)$, hence $h^B_{\mathcal{U}} (T^m, T^iK)\le
h^B_{T^{-i}\mathcal{U}}(T^m, K)$, as $\lambda>0$ is arbitrary.

(4) Let $m\in \mathbb{N}$ and $n\in \N$, $\lambda> 0$. If
$\mathcal{E}\in \mathfrak{C} (T,\mathcal{U},K,m n)$ then
$$n_{T^m, \mathcal{U}_0^{m- 1}} (E)= \left[\frac{n_{T, \mathcal{U}}
(E)}{m}\right]\ge \max \left\{n, \frac{n_{T, \mathcal{U}} (E)}{m}-
\frac{m- 1}{m}\right\}$$ for each $E\in \mathcal{E}$, where $[a]$
denotes the integral part of a real number $a$, so
$$\inf_{E\in \mathcal{E}} n_{T^m, \mathcal{U}_0^{m- 1}} (E)\ge
n,$$ thus $\mathcal{E}\in \mathfrak{C} (T^m,\mathcal{U}_0^{m- 1},K,
n)$ and
\begin{eqnarray*}
m_{T^m, \mathcal{U}_0^{m- 1}} (K, \lambda,
n)&\le & m (T^m, \mathcal{U}_0^{m- 1}, \mathcal{E},
\lambda)= \sum_{E\in \mathcal{E}} (e^\lambda)^{- n_{T^m,
\mathcal{U}_0^{m- 1}}
(E)}\\
&\le & \sum_{E\in \mathcal{E}} (e^\lambda)^{\frac{m-
1}{m}- \frac{n_{T, \mathcal{U}} (E)}{m}}= e^{\frac{(m- 1)
\lambda}{m}}\cdot m (T, \mathcal{U}, \mathcal{E},
\frac{\lambda}{m}),
\end{eqnarray*}
which implies $m_{T^m, \mathcal{U}_0^{m- 1}}(K, \lambda,
n)\le e^{\frac{(m- 1) \lambda}{m}} m_{T, \mathcal{U}} (K,
\frac{\lambda}{m}, m n)$ as $\mathcal{E}$ is arbitrary. We
get
$$m_{T^m, \mathcal{U}_0^{m- 1}}(K, \lambda)\le e^{\frac{(m- 1)
\lambda}{m}}\cdot m_{T, \mathcal{U}} (K,
\frac{\lambda}{m})$$ by letting $n\rightarrow +\infty$, hence
$h^B_{\mathcal{U}_0^{m- 1}} (T^m, K)\le m h^B_\mathcal{U} (T, K)$,
as $\lambda> 0$ is arbitrary.

Following similar discussions we obtain $m_{T, \mathcal{U}} (K,
\lambda)\le m_{T^m, \mathcal{U}_0^{m- 1}} (K, m\lambda)$ for each
$\lambda> 0$, then $h^B_\mathcal{U} (T, K)$ $\le \frac{1}{m}
h^B_{\mathcal{U}_0^{m- 1}} (T^m, K)$. That is,
$h^B_{\mathcal{U}_0^{m- 1}} (T^m, K)= m h^B_\mathcal{U} (T, K)$.
\end{proof}
 By Proposition \ref{06.02.28} (2), $h^B (T, E)$ increases w.r.t.
 $E\subseteq X$. At the same time,
if $E\subseteq X$ is a non-empty countable set then $h^B(T,E)=0$.
Finally, it is worth mentioning that a).
$h_{\mathcal{U}}^B(T,\emptyset)=h_{\mathcal{U}}(T,\emptyset)=-\infty$
for any  $\mathcal{U}\in \mathcal{C}_X$, and so
$h^B(T,\emptyset)=h(T,\emptyset)=-\infty$; b). when $\emptyset
\neq K\subseteq X$, one has $h_{\mathcal{U}}(T,K)\ge
h_{\mathcal{U}}^B(T,K)\ge 0$ for any $\mathcal{U}\in
\mathcal{C}_X$, and so $h(T,K)\ge h^B(T,K)\ge 0$.

\section{Distribution principles}

In this section we shall present two important distribution
principles which link Question \ref{q 1} with ergodic theory and
play a key role in the next section. We remark that the distribution
principles were essentially contained in \cite{P}.

\medskip

The first result is an  obvious link between two definitions of
entropy.

\begin{lem}[{\bf Bridge Lemma}] \label{bridge}
Let $(X, T)$ be a GTDS, $\mathcal{U}\in \mathcal{C}_X$ and
$K\subseteq X$. Then
\begin{equation*}
h_\mathcal{U}^B (T, K)\le \liminf_{n\rightarrow +\infty} \frac{1}{n}
\log N (\mathcal{U}_0^{n- 1}, K)\le h_\mathcal{U} (T, K).
\end{equation*}
\end{lem}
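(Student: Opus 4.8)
The second inequality is immediate: by definition $h_\mathcal{U}(T,K)=\limsup_{n\to+\infty}\frac1n\log N(\mathcal{U}_0^{n-1},K)$, and a $\liminf$ never exceeds the corresponding $\limsup$. So all the content lies in the first inequality. Write $L=\liminf_{n\to+\infty}\frac1n\log N(\mathcal{U}_0^{n-1},K)$. The cases $L=+\infty$ (nothing to prove) and $K=\emptyset$ (both sides equal $-\infty$) are trivial, so I may assume $\emptyset\neq K$ and $L<+\infty$. Since $h^B_\mathcal{U}(T,K)=\inf\{\lambda\in\mathbb{R}:m_{T,\mathcal{U}}(K,\lambda)=0\}$, the plan is to fix an arbitrary $\lambda>\max\{L,0\}$ and prove $m_{T,\mathcal{U}}(K,\lambda)=0$; letting $\lambda\downarrow L$ then yields $h^B_\mathcal{U}(T,K)\le L$.

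The one genuinely useful observation is the following link between the two entropy descriptions. If $W\in\mathcal{U}_0^{n-1}$, say $W\subseteq\bigcap_{i=0}^{n-1}T^{-i}U_i$ with $U_i\in\mathcal{U}$, then $T^iW\subseteq U_i$, so $T^iW\succeq\mathcal{U}$ for every $0\le i\le n-1$, whence $n_{T,\mathcal{U}}(W)\ge n$ by the very definition of $n_{T,\mathcal{U}}$. Consequently $e^{-\lambda n_{T,\mathcal{U}}(W)}\le e^{-\lambda n}$ for each such $W$. This is precisely what converts a cardinality bound on a subfamily of $\mathcal{U}_0^{n-1}$ into a bound on the Carath\'eodory-type sum $m(T,\mathcal{U},\cdot,\lambda)$.

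To exploit this, choose $\delta>0$ with $L+\delta<\lambda$. By definition of $\liminf$ the set $A=\{n\in\mathbb{N}:N(\mathcal{U}_0^{n-1},K)<e^{(L+\delta)n}\}$ is infinite. Now fix $k\in\mathbb{N}$. For any $n\in A$ with $n\ge k$, pick a minimal subfamily $\mathcal{V}_n\subseteq\mathcal{U}_0^{n-1}$ with $\cup\mathcal{V}_n\supseteq K$, so that $|\mathcal{V}_n|=N(\mathcal{U}_0^{n-1},K)$. Since $n\ge k$ forces $\mathcal{U}_0^{n-1}\succeq\mathcal{U}_0^{k-1}$, every member of $\mathcal{V}_n$ is contained in an element of $\mathcal{U}_0^{k-1}$, i.e. $\mathcal{V}_n\in\mathfrak{C}(T,\mathcal{U},K,k)$. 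Combining this with the previous paragraph,
\[
m_{T,\mathcal{U}}(K,\lambda,k)\le m(T,\mathcal{U},\mathcal{V}_n,\lambda)=\sum_{W\in\mathcal{V}_n}e^{-\lambda n_{T,\mathcal{U}}(W)}\le|\mathcal{V}_n|\,e^{-\lambda n}<e^{(L+\delta-\lambda)n}.
\]
Letting $n\to+\infty$ along $A$ (legitimate since $A$ is infinite while $k$ is fixed) and using $L+\delta-\lambda<0$ gives $m_{T,\mathcal{U}}(K,\lambda,k)=0$. As this holds for every $k$, we obtain $m_{T,\mathcal{U}}(K,\lambda)=\lim_{k\to+\infty}m_{T,\mathcal{U}}(K,\lambda,k)=0$, which is what was required.

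I expect the main (modest) obstacle to be purely organisational: keeping the quantifiers on $k$ and $n$ in the correct order. One must bound $m_{T,\mathcal{U}}(K,\lambda,k)$ for a \emph{fixed} $k$ using a cover $\mathcal{V}_n$ built at a \emph{large} scale $n\ge k$ drawn from the thin set $A$, and only then send $n\to+\infty$; this is what makes the geometric bound $e^{(L+\delta-\lambda)n}\to0$ available for each $k$ separately, so that the monotone limit in $k$ vanishes as well. Beyond the single inequality $n_{T,\mathcal{U}}(W)\ge n$ for $W\in\mathcal{U}_0^{n-1}$, no delicate estimate is needed.
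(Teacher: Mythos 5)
Your proof is correct and takes essentially the same route as the paper's: both rest on the single observation that $n_{T,\mathcal{U}}(W)\ge n$ for every $W\in\mathcal{U}_0^{n-1}$, so that a minimal subcover of $K$ drawn from $\mathcal{U}_0^{n-1}$ bounds the Carath\'eodory sum by $N(\mathcal{U}_0^{n-1},K)e^{-\lambda n}$ and forces $m_{T,\mathcal{U}}(K,\lambda)=0$ for every $\lambda$ exceeding the $\liminf$. Your explicit bookkeeping with the infinite set $A$ and a fixed $k$ is merely a spelled-out version of the paper's implicit use of the monotonicity of $m_{T,\mathcal{U}}(K,\lambda,k)$ in $k$.
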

\begin{proof} When $K=\emptyset$, this is clear. Now we assume
$K\neq \emptyset$. For each $n\in \mathbb{N}$ let $\mathcal{T}_n=
\{A_1, \cdots, A_{N (\mathcal{U}_0^{n- 1}, K)}\}\subseteq
\mathcal{U}_0^{n- 1}$ such that $\cup \mathcal{T}_n\supseteq K$. As
$n_{T, \mathcal{U}} (A)\ge n$ for each $A\in \mathcal{T}_n$, for
each $\lambda\ge 0$ one has
\begin{equation*}
m_{T, \mathcal{U}} \left( K, \lambda, n \right)\le
\sum_{A\in \mathcal{T}_n} \left(e^\lambda\right)^{- n_{T,
\mathcal{U}} (A)}\le \sum_{A\in \mathcal{T}_n} (e^\lambda)^{- n}= N
(\mathcal{U}_0^{n- 1}, K) e^{- \lambda n},
\end{equation*}
then
\begin{equation*}
m_{T, \mathcal{U}} (K, \lambda)\le \liminf_{n\rightarrow +\infty} N
(\mathcal{U}_0^{n- 1}, K) e^{- \lambda n}= \liminf_{n\rightarrow
+\infty} e^{- n (\lambda- \frac{1}{n} \log N (\mathcal{U}_0^{n- 1},
K))}.
\end{equation*}
So, if $\lambda> \liminf\limits_{n\rightarrow +\infty} \frac{1}{n}
\log N (\mathcal{U}_0^{n- 1}, K)$ then $m_{T, \mathcal{U}} (K,
\lambda)= 0$, which ends the proof.
\end{proof}

Let $(X, T)$ be a GTDS, $\mathcal{U}\in \mathcal{C}_X$, $K\subseteq
X$ and $n\in \mathbb{N}$. Set $\mathfrak{M} (T,\mathcal{U}, K,n)$
to be the collection of all countable families $\mathcal{T}$ of
subsets of $X$ with
\begin{equation*}
\cup \mathcal{T}\supseteq K\ \text{and for each}\ A\in \mathcal{T},
A\cap K\neq \emptyset, n_{T, \mathcal{U}} (A)\ge n\ \text{and}\ A\in
\mathcal{U}_0^{n_{T, \mathcal{U}} (A)- 1}.
\end{equation*}
Then for each $\lambda\in \mathbb{R}$ set
\begin{equation*}
f_{T, \mathcal{U}} (K, \lambda)= \lim_{n\rightarrow +\infty}\, \inf_{\mathcal{T}\in
\mathfrak{M} (T,\mathcal{U}, K,n)} m(T, \mathcal{U}, \mathcal{T}, \lambda).
\end{equation*}
It's not hard to check that $f_{T, \mathcal{U}} (K, \lambda)= m_{T,
\mathcal{U}} (K, \lambda)$ for $\lambda\in \mathbb{R}$. In fact, for
$\mathcal{E}\in \mathfrak{C} (T,\mathcal{U},K, n)$,
note that for each $E\in \mathcal{E}$ there exists $\widetilde{E}\in
\mathcal{U}_0^{n_{T, \mathcal{U}} (E)- 1}$ with $E\subseteq
\widetilde{E}$ and so $n_{T, \mathcal{U}} (E)= n_{T, \mathcal{U}}
(\widetilde{E})$. Then let $\mathcal{T}\doteq \{\widetilde{E}: E\in
\mathcal{E} \text{ with }E\cap K\neq \emptyset\}$. Then
$\mathcal{T}\in \mathfrak{M} (T,\mathcal{U}, K,n)$. Particularly, when $E\cap K\neq \emptyset$ for each
$E\in \mathcal{E}$, one has $m (T,
\mathcal{U}, \mathcal{T}, \lambda)= m (T, \mathcal{U}, \mathcal{E},
\lambda)$. This implies $f_{T, \mathcal{U}} (K, \lambda)= m_{T,
\mathcal{U}} (K, \lambda)$.

\medskip
For a GTDS $(X,T)$, denote by $\mathcal{M}(X)$ the set of all Borel
probability measures on $X$. The following two principles will be
proved to be very useful.

\begin{lem}[{\bf Non-Uniform Mass Distribution Principle}] \label{non-uniform}
Let $(X, T)$ be a GTDS, $d> 0, M\in \mathbb{N}$, $Z\subseteq X$,
$\alpha\in \mathcal{P}_X, \mathcal{U}\in \mathcal{C}_X$ and
$\theta\in \mathcal{M} (X)$. Assume that each element of
$\mathcal{U}$ has a non-empty intersection with at most $M$ elements
of $\alpha$. If there exists $Z_\theta\subseteq Z$ such that
$Z_\theta$ has positive outer $\theta$-measure (i.e. $\theta^*
(Z_\theta)> 0$) and
\begin{equation*}
\forall x\in Z_\theta, \exists c (x)> 0\ \text{such that}\ \forall
n\in \mathbb{N}, \theta (\alpha_0^{n- 1} (x))\le c (x) e^{- n d}.
\end{equation*}
Then $h^B_\mathcal{U} (T, Z)\ge d- \log M$. In particular,
$h^B_\alpha (T, Z)\ge d$.
\end{lem}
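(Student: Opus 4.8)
The plan is to prove directly that $m_{T,\mathcal{U}}(Z,\lambda)>0$ for every $\lambda\le d-\log M$. Since $m_{T,\mathcal{U}}(Z,\cdot)$ is non-increasing and passes from $+\infty$ to $0$ across the single critical value $h^B_\mathcal{U}(T,Z)$, the positivity $m_{T,\mathcal{U}}(Z,\lambda)>0$ forces $\lambda\le h^B_\mathcal{U}(T,Z)$, and letting $\lambda\uparrow d-\log M$ gives the desired bound. We may assume $d-\log M>0$, for otherwise the conclusion is immediate from $h^B_\mathcal{U}(T,Z)\ge 0$ (note $Z\supseteq Z_\theta\neq\emptyset$ since $\theta^*(Z_\theta)>0$). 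The final assertion will then follow by specializing to $\mathcal{U}=\alpha$: each element of the partition $\alpha$ meets only itself among the members of $\alpha$, so one may take $M=1$ and $\log M=0$, yielding $h^B_\alpha(T,Z)\ge d$.

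The combinatorial bridge between $\mathcal{U}$ and $\alpha$ is the observation that each element of $\mathcal{U}_0^{n-1}$ meets at most $M^n$ elements of $\alpha_0^{n-1}$: writing such an element as $\bigcap_{i=0}^{n-1}T^{-i}U_i$ with $U_i\in\mathcal{U}$, any element $\bigcap_{i=0}^{n-1}T^{-i}Q_i$ of $\alpha_0^{n-1}$ meeting it must satisfy $U_i\cap Q_i\neq\emptyset$ for every $i$, and for a fixed $U_i$ there are at most $M$ admissible $Q_i\in\alpha$. To remove the dependence of $c(x)$ on $x$, I would truncate: setting $W_N=\{x\in Z_\theta: c(x)\le N\}$ one has $W_N\uparrow Z_\theta$, and since the outer measure associated with a measure is continuous along increasing unions, $\theta^*(W_N)\to\theta^*(Z_\theta)>0$. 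Fix $N$ with $\beta=\theta^*(W_N)>0$ and write $W=W_N$, so that $\theta(\alpha_0^{n-1}(x))\le Ne^{-nd}$ for every $x\in W$ and every $n\in\N$.

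The core is then a mass-distribution count. Fix $k\in\N$ and let $\mathcal{E}\in\mathfrak{C}(T,\mathcal{U},Z,k)$. For each $E\in\mathcal{E}$ put $n_E=n_{T,\mathcal{U}}(E)\ge k$ and choose $A_E\in\mathcal{U}_0^{n_E-1}$ with $E\subseteq A_E$. By the combinatorial bound $A_E$ meets at most $M^{n_E}$ elements of $\alpha_0^{n_E-1}$, and each such piece $P$ that also meets $W$ satisfies $\theta(P)\le Ne^{-n_Ed}$ (apply the decay at a point of $P\cap W$ with $n=n_E$); summing the $\theta$-masses over these pieces gives
\begin{equation*}
\theta^*(W\cap E)\le\theta^*(W\cap A_E)\le M^{n_E}Ne^{-n_Ed}=Ne^{-n_E(d-\log M)}
\end{equation*}
(when $n_E=+\infty$ the left side is $0$, since $d-\log M>0$). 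Using $W\subseteq Z\subseteq\cup\mathcal{E}$ and countable subadditivity of $\theta^*$, for any $\lambda\le d-\log M$,
\begin{equation*}
\beta=\theta^*(W)\le\sum_{E\in\mathcal{E}}\theta^*(W\cap E)\le N\sum_{E\in\mathcal{E}}e^{-n_E(d-\log M)}\le N\sum_{E\in\mathcal{E}}e^{-\lambda n_E}=N\,m(T,\mathcal{U},\mathcal{E},\lambda).
\end{equation*}
Taking the infimum over $\mathcal{E}\in\mathfrak{C}(T,\mathcal{U},Z,k)$ gives $m_{T,\mathcal{U}}(Z,\lambda,k)\ge\beta/N$ for every $k$, hence $m_{T,\mathcal{U}}(Z,\lambda)\ge\beta/N>0$, which is what we needed.

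I expect the delicate points to be bookkeeping rather than conceptual: the truncation of $c(x)$ combined with continuity-from-below of the outer measure $\theta^*$, and the care needed in bounding $\theta^*(W\cap A_E)$ by summing $\theta$-masses only over those pieces of $\alpha_0^{n_E-1}$ that actually meet $W$ (pieces meeting $A_E$ but missing $W$ are discarded, while those meeting $W$ are controlled by the hypothesis). The factor $M^{n}$ coming from passing between $\mathcal{U}$ and $\alpha$ is precisely what produces the $-\log M$ correction, and setting $M=1$ recovers the sharp estimate $h^B_\alpha(T,Z)\ge d$.
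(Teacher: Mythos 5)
Your proof is correct and follows essentially the same route as the paper's: truncate $c(x)$ to a set $Z_\theta^N$ of positive outer $\theta$-measure, use the fact that an element of $\mathcal{U}_0^{n-1}$ meets at most $M^n$ elements of $\alpha_0^{n-1}$ to bound $\theta^*$ of each covering piece by $Ne^{-n_E(d-\log M)}$, and sum over the cover. The only cosmetic differences are that you work directly with $\mathfrak{C}$ and $m_{T,\mathcal{U}}$, enlarging each $E$ to $A_E\in\mathcal{U}_0^{n_E-1}$ inline (the paper instead routes through the pre-established identity $f_{T,\mathcal{U}}=m_{T,\mathcal{U}}$ and the families $\mathfrak{M}$), and that you treat $n_E=+\infty$ by a limiting remark where the paper uses the $\min\{n_{T,\mathcal{U}}(A),s\}$, $s\to+\infty$ truncation.
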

\begin{proof}
It makes no difference to assume $d- \log M>0$. For each $k\in
\mathbb{N}$ set $Z_\theta^k= \{x\in Z_\theta: c (x)\le k\}$. Then
for some $N\in \mathbb{N}$, $\theta^* (Z_\theta^N)> 0$, as
$Z_\theta^1\subseteq Z_\theta^2\subseteq \cdots$, $Z_\theta=
\bigcup_{k\in \mathbb{N}} Z_\theta^k$ and $\theta^* (Z_\theta)> 0$.

Let $n\in \mathbb{N}$ and $\mathcal{T}\in \mathfrak{M} (T,
\mathcal{U}, Z,n)$. If $A\in \mathcal{T}$ satisfies $A\cap
Z_\theta^N\neq \emptyset$, then for each $s\in \mathbb{N}$ and $B\in
\alpha_0^{\min \{n_{T, \mathcal{U}} (A), s\}- 1}$ with $B\cap (A\cap
Z_\theta^N)\neq \emptyset$ select $x_B\in B\cap (A\cap Z_\theta^N)$,
so
\begin{equation*}
\theta (B)= \theta (\alpha_0^{\min \{n_{T, \mathcal{U}} (A), s\}- 1}
(x_B))\le c (x_B) e^{- \min \{n_{T, \mathcal{U}} (A), s\} d}\le N
e^{- \min \{n_{T, \mathcal{U}} (A), s\} d}.
\end{equation*}
Since there are at most $M^{\min \{n_{T, \mathcal{U}} (A), s\}}$
elements of $\alpha_0^{\min \{n_{T, \mathcal{U}} (A), s\}- 1}$ which
have non-empty intersection with $A\cap Z_\theta^N$, we have
$$\theta^* (A\cap Z_\theta^N)\le M^{\min \{n_{T, \mathcal{U}}
(A), s\}} N e^{- \min \{n_{T, \mathcal{U}} (A), s\} d}= N e^{- \min
\{n_{T, \mathcal{U}} (A), s\} (d- \log M)}.$$ Letting $s\rightarrow
+\infty$ we obtain that $\theta^* (A\cap Z_\theta^N)\le N e^{- n_{T,
\mathcal{U}} (A) (d- \log M)}$ for any $A\in \mathcal{T}$ satisfying
$A\cap Z_\theta^N\neq \emptyset$. Moreover,
\begin{eqnarray*}
\sum_{A\in \mathcal{T}} e^{- n_{T, \mathcal{U}} (A) (d- \log M)}&\ge
& \sum_{A\in \mathcal{T}, A\cap Z_\theta^N\neq \emptyset} e^{-
n_{T, \mathcal{U}} (A) (d- \log M)} \\
&\ge & \sum_{A\in \mathcal{T}, A\cap Z_\theta^N\neq \emptyset}
\frac{\theta^* (A\cap Z_\theta^N)}{N}\ge \frac{1}{N} \theta^*
(Z_\theta^N)> 0.
\end{eqnarray*}
Since $n$ and $\mathcal{T}$ are arbitrary, $f_{T, \mathcal{U}} (Z,
d- \log M)\ge \frac{1}{N} \theta^* (Z_\theta^N)> 0$. So
$h^B_\mathcal{U} (T, Z)\ge d- \log M$.
\end{proof}

\begin{lem}[{\bf Uniform Mass Distribution Principle}] \label{uniform}
Let $(X, T)$ be a GTDS, $c> 0$, $d> 0$, $Z\subseteq X$ and
$\alpha\in \mathcal{P}_X$. If there exists $\theta\in \mathcal{M}
(X)$ such that for each $x\in Z$ and $n\in \mathbb{N}$, $\theta
(\alpha_0^{n- 1} (x))\ge c e^{- n d}$. Then $h_\alpha (T, Z) \le d$.
\end{lem}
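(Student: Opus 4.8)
The plan is to bound the covering number $N(\alpha_0^{n-1}, Z)$ directly from the measure hypothesis and then pass to the exponential growth rate (the case $Z=\emptyset$ being trivial, since then $h_\alpha(T,Z)=-\infty$). The crucial observation I would make first is that, since $\alpha$ is a partition, so is each refinement $\alpha_0^{n-1}=\bigvee_{i=0}^{n-1}T^{-i}\alpha$; hence its atoms are pairwise disjoint, and a minimal subfamily of $\alpha_0^{n-1}$ whose union contains $Z$ consists precisely of those atoms that meet $Z$. Thus $N(\alpha_0^{n-1}, Z)$ equals the number of atoms of $\alpha_0^{n-1}$ having non-empty intersection with $Z$.

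Next, for each atom $A\in\alpha_0^{n-1}$ with $A\cap Z\neq\emptyset$ I would pick a point $x_A\in A\cap Z$. Because $A$ is the unique atom of the partition $\alpha_0^{n-1}$ containing $x_A$, we have $A=\alpha_0^{n-1}(x_A)$, so the hypothesis applied to $x_A\in Z$ gives $\theta(A)=\theta(\alpha_0^{n-1}(x_A))\ge c\,e^{-nd}$. Summing this lower bound over all atoms meeting $Z$, and using that these atoms are disjoint Borel sets whose $\theta$-measures therefore add up to at most $\theta(X)=1$, I obtain
$$1\ge \sum_{A\in\alpha_0^{n-1},\, A\cap Z\neq\emptyset}\theta(A)\ge N(\alpha_0^{n-1}, Z)\, c\, e^{-nd},$$
whence $N(\alpha_0^{n-1}, Z)\le c^{-1}e^{nd}$ for every $n\in\mathbb{N}$.

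Finally I would take logarithms, divide by $n$, and pass to the upper limit:
$$h_\alpha(T, Z)=\limsup_{n\rightarrow+\infty}\frac{1}{n}\log N(\alpha_0^{n-1}, Z)\le \limsup_{n\rightarrow+\infty}\Big(d-\frac{1}{n}\log c\Big)=d,$$
since $\frac{1}{n}\log c\rightarrow 0$. This is exactly the desired conclusion. As for difficulty, there is essentially no serious obstacle: the whole argument is a one-sided counting estimate, the mirror image of the packing-type lower bound used in Lemma \ref{non-uniform}. The only points requiring a little care are the reduction in the first step, which genuinely needs $\alpha$ to be a partition (so that the atoms are disjoint and the minimal cover is canonical), and checking that the constant $c$ enters only through the harmless term $\frac{1}{n}\log c$ that vanishes in the limit.
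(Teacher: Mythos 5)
Your proof is correct and follows essentially the same route as the paper's: both identify $N(\alpha_0^{n-1},Z)$ with the number of atoms of $\alpha_0^{n-1}$ meeting $Z$, pick a point of $Z$ in each such atom to get $\theta(A)\ge c\,e^{-nd}$, sum over the disjoint atoms to obtain $N(\alpha_0^{n-1},Z)\le c^{-1}e^{nd}$, and let $n\rightarrow+\infty$ (the paper likewise dispatches $Z=\emptyset$ first). No gaps; nothing further is needed.
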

\begin{proof} If $Z=\emptyset$ then $h_\alpha (T, Z)=-\infty \le d$. In
the following we assume $Z\neq \emptyset$. For $n\in \mathbb{N}$,
let $\mathcal{T}_n= \{A_1, \cdots, A_k\}$ be the collection of all
elements of $\alpha_0^{n- 1}$ which have non-empty intersection with
$Z$, where $k= N (\alpha_0^{n- 1}, Z)$. Take $x_i\in A_i\cap Z$,
then $\theta (A_i)= \theta (\alpha_0^{n- 1} (x_i))\ge c e^{- n d}$
for $i\in \{1,\cdots,k\}$. Therefore $1\ge \sum_{i= 1}^k \theta
(A_i)\ge k c e^{- n d}$, that is, $N (\alpha_0^{n- 1}, Z)= k\le
\frac{e^{n d}}{c}$. Finally letting $n\rightarrow +\infty$ we know
$h_\alpha (T, Z) \le d$.
\end{proof}

\section{Each TDS with finite entropy is lowerable}

In this section we shall give an affirmative answer to Question
\ref{q 1} for a TDS with finite entropy. In fact, we can obtain more
about it. Precisely, if $(X, T)$ is a TDS with finite entropy, then
for each $0\le h\le h_{\text{top}} (T, X)$ there exists a non-empty
compact subset $K_h\subseteq X$ such that $h^B (T, K_h)= h (T, K_h)=
h$ (for details see Theorem \ref{aim of paper}), particularly, $(X,
T)$ is lowerable.

\medskip

Let $(X, T)$ be a GTDS. Denote by $\mathcal{M}(X, T)$ and
$\mathcal{M}^{e}(X, T)$ the set of all $T$-invariant Borel
probability measures and ergodic $T$-invariant Borel probability
measures on $X$, respectively. Then $\mathcal{M}(X)$ and
$\mathcal{M}(X,T)$ are both convex, compact metric spaces when
endowed with the weak$^*$-topology. Denote by $\mathcal{B}_X$ the
set of all Borel subsets of $X$.

For any given $\alpha \in \mathcal{P}_X$, $\mu\in {\mathcal M}(X)$
and any sub-$\sigma$-algebra $\mathcal{C}\subseteq \mathcal
{B}_\mu$, where $\mathcal{B}_\mu$ is the completion of
$\mathcal{B}_X$ under $\mu$, the {\it conditional informational
function of $\alpha$ relevant to $\mathcal{C}$} is defined by
$$I_\mu(\alpha|\mathcal{C})(x)=\sum_{A\in \alpha}-1_A(x) \log \mathbb{E}_\mu (1_A|\mathcal{C})(x),$$
 where $\mathbb{E}_\mu (1_A|\mathcal{C})$ is the conditional
expectation of $1_A$ w.r.t. $\mathcal{C}$. Let
$$\ H_{\mu}(\alpha|\mathcal{C})=\int_X I_\mu(\alpha|\mathcal{C})(x) \, d\mu(x)=\sum_{A\in \alpha} \int_X
- \mathbb{E}_\mu (1_A|\mathcal{C}) \log
\mathbb{E}_\mu(1_A|\mathcal{C}) d \mu.$$ A standard fact states that
$H_\mu(\alpha| \mathcal{C})$ increases w.r.t. $\alpha$ and decreases
w.r.t. $\mathcal{C}$. Now set
\begin{align*}
H_\mu(\mathcal{U}|\mathcal{C})=\inf_{\beta\in \mathcal{P}_X,
\beta\succeq \mathcal{U}} H_\mu(\beta|\mathcal{C})
\end{align*}
for $\mathcal{U}\in \mathcal{C}_X$. Clearly,
$H_\mu(\mathcal{U}|\mathcal{C})$ increases w.r.t. $\mathcal{U}$ and
decreases w.r.t. $\mathcal{C}$.

 When $\mu\in \mathcal{M}(X,T)$ and $T^{-1}\mathcal{C}\subseteq \mathcal{C}$ in
the sense of $\mu$, it is not hard to see that
$H_\mu(\mathcal{U}_0^{n-1}|\mathcal{C})$ is a non-negative and
sub-additive sequence for a given $\mathcal{U}\in \mathcal{C}_X$,
so we can define
$$h_\mu(T,\mathcal{U}|\mathcal{C})=\lim_{n\rightarrow +\infty} \frac{1}{n}
H_\mu(\mathcal{U}_0^{n-1}|\mathcal{C})=\inf_{n\ge 1} \frac{1}{n}
H_\mu(\mathcal{U}_0^{n-1}|\mathcal{C}).$$ Clearly, $h_\mu(T,
\mathcal{U}|\mathcal{C})$ also increases w.r.t. $\mathcal{U}$ and
decreases w.r.t. $\mathcal{C}$.  The {\it relative
measure-theoretical $\mu$-entropy of $(X, T)$ relevant to
$\mathcal{C}$} is defined by
$$h_\mu (T, X| \mathcal{C})= \sup_{\alpha\in \mathcal{P}_X}
h_\mu (T, \alpha| \mathcal{C}).$$ Following a similar discussion of
\cite[Lemma 2.3 (1)]{HYZ}, one has $$h_\mu (T, X| \mathcal{C})=
\sup_{\mathcal{U}\in \mathcal{C}_X^o} h_\mu (T, \mathcal{U}|
\mathcal{C}).$$ So, if $\{\mathcal{U}_n\}_{n\in \mathbb{N}}\subseteq
\mathcal{C}_X^o$ satisfies $\lim_{n\rightarrow +\infty}
||\mathcal{U}_n||=0$ then $\lim_{n\rightarrow +\infty}h_\mu (T,
\mathcal{U}_n| \mathcal{C})$ $=h_\mu (T, X| \mathcal{C})$, as
$h_\mu(T, \mathcal{U}|\mathcal{C})$ increases w.r.t. $\mathcal{U}$.
It is not hard to see that
\begin{equation*} \label{fact3-06.03.04}
h_\mu (T, \mathcal{U}| \mathcal{C})= \frac{1}{n} h_\mu (T^n,
\mathcal{U}_0^{n- 1}| \mathcal{C})\text{ for each}\ n\in
\mathbb{N} \text{ and } \mathcal{U}\in \mathcal{C}_X.
\end{equation*}
If $\mathcal{C}=\{ \emptyset, X\}$ $(\text{mod}\ \mu)$, for
simplicity we shall write $H_{\mu}(\mathcal{U}|\mathcal{C})$,
 $h_\mu(T,\mathcal{U}|\mathcal{C})$ and $h_\mu(T,X|\mathcal{C})$ by
 $H_{\mu}(\mathcal{U})$, $h_\mu(T,\mathcal{U})$ and $h_\mu(T,X)$, respectively.

The following result is a conditional version of
Shannon-McMillan-Breiman Theorem. Its proof is completely similar to
the proof of Shannon-McMillan-Breiman Theorem (see e.g.
\cite[Theorem 4.2]{Bo}, \cite[Theorem II.1.5]{K}, \cite{G}).

\begin{thm} \label{SMB-conditional version} Let $(X, T)$ be a TDS,
$\mu\in \mathcal{M}(X, T)$, $\alpha\in \mathcal{P}_X$ and
$\mathcal{C}\subseteq \mathcal{B}_\mu$ a $T$-invariant
sub-$\sigma$-algebra (i.e. $T^{-1}\mathcal{C}=\mathcal{C}$ in the
sense of $\mu$). Then there exists a $T$-invariant function $f\in
L^1(\mu)$ such that $\int_X f (x) d\mu (x)= h_\mu (T, \alpha|
\mathcal{C})$ and
\begin{equation*}
\lim_{n\rightarrow +\infty} \frac{I_\mu (\alpha_0^{n- 1}|
\mathcal{C}) (x)}{n}= f (x)\ \mbox{for $\mu$-a.e. }x\in X\ \text{and
in}\ L^1 (\mu).
\end{equation*}
Moreover, if $\mu$ is ergodic then $f (x)= h_\mu (T, \alpha|
\mathcal{C})$ for $\mu$-a.e. $x\in X$.
\end{thm}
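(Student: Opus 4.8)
The plan is to follow the classical proof of the Shannon--McMillan--Breiman theorem essentially verbatim, simply carrying the sub-$\sigma$-algebra $\mathcal{C}$ along as additional conditioning at every step; the hypothesis $T^{-1}\mathcal{C}=\mathcal{C}$ (mod $\mu$) is precisely what makes this harmless. The first step is to rewrite $I_\mu(\alpha_0^{n-1}|\mathcal{C})$ as a Birkhoff-type sum of a convergent sequence of functions. Peeling off one coordinate at a time by the chain rule for conditional information, $I_\mu(\beta\vee\gamma|\mathcal{C})=I_\mu(\gamma|\beta\vee\mathcal{C})+I_\mu(\beta|\mathcal{C})$, and then applying the transformation rule $I_\mu(T^{-k}\beta|T^{-k}\mathcal{G})=I_\mu(\beta|\mathcal{G})\circ T^{k}$ together with the invariances $T\mu=\mu$ and $T^{-k}\mathcal{C}=\mathcal{C}$, I would obtain
\[
I_\mu(\alpha_0^{n-1}|\mathcal{C})=\sum_{k=0}^{n-1} g_k\circ T^{k},\qquad g_k:=I_\mu\Bigl(\alpha \,\Big|\, \textstyle\bigvee_{j=1}^{k}T^{j}\alpha\vee\mathcal{C}\Bigr),
\]
with the convention $g_0=I_\mu(\alpha|\mathcal{C})$.

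Second, the conditioning $\sigma$-algebras $\bigvee_{j=1}^{k}T^{j}\alpha\vee\mathcal{C}$ increase with $k$ to $\mathcal{F}:=\bigvee_{j=1}^{\infty}T^{j}\alpha\vee\mathcal{C}$, so the increasing-martingale (L\'evy) convergence theorem for conditional information gives $g_k\to g_\infty:=I_\mu(\alpha|\mathcal{F})$ both $\mu$-a.e.\ and in $L^1(\mu)$. The natural candidate for the limit is $f:=\mathbb{E}_\mu(g_\infty|\mathcal{I})$, where $\mathcal{I}$ is the sub-$\sigma$-algebra of $T$-invariant sets; being $\mathcal{I}$-measurable, $f$ is automatically $T$-invariant. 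Integrating the displayed decomposition, using $\int_X g_k\circ T^k\,d\mu=\int_X g_k\,d\mu=H_\mu(\alpha|\bigvee_{j=1}^{k}T^{j}\alpha\vee\mathcal{C})$, and passing to the Ces\`aro limit gives
\[
\int_X f\,d\mu=\int_X g_\infty\,d\mu=\lim_{n\to\infty}\frac1n\sum_{k=0}^{n-1}H_\mu\Bigl(\alpha\,\Big|\,\textstyle\bigvee_{j=1}^{k}T^{j}\alpha\vee\mathcal{C}\Bigr)=\lim_{n\to\infty}\frac1n H_\mu(\alpha_0^{n-1}|\mathcal{C})=h_\mu(T,\alpha|\mathcal{C}),
\]
which is the required value of the integral.

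Finally, to convert the decomposition together with $g_k\to g_\infty$ into the a.e.\ and $L^1$ statement, I would invoke Maker's generalized ergodic theorem, which asserts that if $g_k\to g_\infty$ a.e.\ and $g^*:=\sup_k g_k\in L^1(\mu)$, then $\frac1n\sum_{k=0}^{n-1}g_k\circ T^k\to \mathbb{E}_\mu(g_\infty|\mathcal{I})$ a.e.\ and in $L^1$. The one genuinely analytic point, and the step I expect to be the main obstacle, is the domination $g^*\in L^1(\mu)$. It is supplied by the conditional form of Chung's maximal inequality for the information martingale: applying Doob's optional-stopping bound to the martingale $\mathbb{E}_\mu(1_A|\bigvee_{j=1}^{k}T^{j}\alpha\vee\mathcal{C})$ atom by atom yields $\mu(A\cap\{g^*>t\})\le e^{-t}$ for each $A\in\alpha$, hence $\mu(\{g^*>t\})\le |\alpha|\,e^{-t}$ and $\int_X g^*\,d\mu\le \log|\alpha|+1<\infty$, the finiteness coming from $\alpha$ being a finite partition, with the extra conditioning on $\mathcal{C}$ leaving the estimate untouched. (If one prefers to avoid Maker's theorem, the same domination drives the classical Breiman truncation: replace $g_k$ by $g_\infty$, control the error by $F_N=\sup_{k\ge N}|g_k-g_\infty|$ via Birkhoff's theorem applied to $g_\infty$ and to $F_N$, and let $N\to\infty$ using $\int_X F_N\,d\mu\to 0$ by dominated convergence.) This gives $\frac1n I_\mu(\alpha_0^{n-1}|\mathcal{C})\to f$ a.e.\ and in $L^1$. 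When $\mu$ is ergodic, $\mathcal{I}$ is trivial mod $\mu$, so $f\equiv\int_X g_\infty\,d\mu=h_\mu(T,\alpha|\mathcal{C})$, completing the ergodic case.
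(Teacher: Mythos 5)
Your proposal is correct and takes precisely the route the paper intends: the paper gives no proof of Theorem 4.1, stating only that it is ``completely similar'' to the classical Shannon--McMillan--Breiman argument (citing Bogensch\"{u}tz, Kifer and Glasner), and your write-up is exactly that argument with $\mathcal{C}$ carried along --- the chain-rule decomposition $I_\mu(\alpha_0^{n-1}|\mathcal{C})=\sum_{k=0}^{n-1}g_k\circ T^k$ (which checks out, since $g_k\circ T^k=I_\mu\bigl(T^{-k}\alpha\,\big|\,\alpha_0^{k-1}\vee\mathcal{C}\bigr)$ by $T$-invariance of $\mu$ and $T^{-k}\mathcal{C}=\mathcal{C}$, and these terms telescope), martingale convergence with the Chung--Neveu maximal inequality supplying $g^*\in L^1(\mu)$, and Maker's theorem or the Breiman truncation to conclude. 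The hypothesis $T^{-1}\mathcal{C}=\mathcal{C}$ is invoked exactly where needed, so there is no gap.
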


Let $(X,T)$ be a TDS, $\mu\in \mathcal{M}(X,T)$ and
$\mathcal{B}_\mu$ the completion of $\mathcal{B}_X$ under $\mu$.
Then $(X,\mathcal{B}_\mu,\mu,T)$ is a Lebesgue system. If $\{
\alpha_i\}_{i\in I}$ is a countable family of finite partitions of
$X$, the partition $\alpha=\bigvee_{i\in I}\alpha_i$ is called a
{\it measurable partition}. The sets $A\in \mathcal{B}_\mu$, which
are unions of atoms of $\alpha$, form a sub-$\sigma$-algebra of
$\mathcal{B}_\mu$ denoted by $\widehat{\alpha}$ or $\alpha$ if there
is no ambiguity. Every sub-$\sigma$-algebra of $\mathcal{B}_\mu$
coincides with a sub-$\sigma$-algebra constructed in this way (mod
$\mu$). Given a measurable partition $\alpha$, put
$\alpha^-=\bigvee_{n=1}^{+ \infty} T^{-n}\alpha$ and
$\alpha^T=\bigvee_{n=-\infty}^{+\infty} T^{-n}\alpha$. Define in the
same way $\mathcal{C}^-$ and $\mathcal{C}^T$ if $\mathcal{C}$ is a
sub-$\sigma$-algebra of $\mathcal{B}_\mu$. Clearly, for a measurable
partition $\alpha$, $\widehat{\alpha^-}=(\widehat{\alpha})^-$ (mod
$\mu$) and $\widehat{\alpha^T}=(\widehat{\alpha})^T$ (mod $\mu$).

Let $\mathcal{C}$ be a sub-$\sigma$-algebra of $\mathcal{B}_\mu$ and
$\alpha$ the measurable partition of $X$ with
$\widehat{\alpha}=\mathcal{C}$ (mod $\mu$). $\mu$ can be
disintegrated over $\mathcal{C}$ as $\mu=\int_X \mu_x d \mu(x)$
where $\mu_x\in \mathcal{M}(X)$ and $\mu_x(\alpha(x))=1$ for
$\mu$-a.e. $x\in X$. The disintegration is characterized by
\eqref{meas1} and \eqref{meas3} below: for every $f\in
L^1(X,\mathcal{B}_X,\mu)$,
\begin{eqnarray} \label{meas1}
&&f \in L^1(X,\mathcal{B}_X,\mu_x) \text{ for $\mu$-a.e. } x\in X\
\text{and}
\\&& \text{the function }x \mapsto \int_X  f(y) d\mu_x (y)\text{ is in
}L^1(X,\mathcal{C},\mu); \nonumber
\end{eqnarray}
\begin{equation}
\label{meas3} \mathbb{E}_{\mu}(f|\mathcal{C})(x)=\int_X f\,d\mu_{x}
\ \text{for $\mu$-a.e. } x\in X.
\end{equation}

Then, for any $f \in L^1(X,\mathcal{B}_X,\mu)$, the following
holds
\begin{equation*}
\int_X \left(\int_X f\,d\mu_x \right)\, d\mu(x)=\int_X f \,d\mu.
\end{equation*}
Define for $\mu$-a.e. $x\in X$ the set $\Gamma_x=\{y\in X :
\mu_x=\mu_y \}$. Then $\mu_x(\Gamma_x)=1$ for $\mu$-a.e. $x\in X$.
Hence given any $f \in L^1(X,\mathcal{B}_X,\mu)$, for $\mu$-a.e.
$x\in X$, one has
\begin{align}\label{meas4}
\mathbb{E}_{\mu}(f|\mathcal{C})(y)=\int_X f\,d\mu_{y}=\int_X
f\,d\mu_{x}=\mathbb{E}_{\mu}(f|\mathcal{C})(x)
\end{align}
for $\mu_x$-a.e. $y\in X$. Particularly, if $f$ is
$\mathcal{C}$-measurable, then for $\mu$-a.e. $x\in X$, one has
\begin{align}\label{meas5}
f(y)=f(x) \text{ for $\mu_x$-a.e. $y\in X$}.
\end{align}

\medskip

\begin{prop} \label{more estimate-not measurable}
Let $(X, T)$ be a TDS, $\mu\in \mathcal{M}(X, T)$ and
$\mathcal{C}\subseteq \mathcal{B}_\mu$ a $T$-invariant
sub-$\sigma$-algebra. If $\mu= \int_X \mu_x d \mu (x)$ is the
disintegration of $\mu$ over $\mathcal{C}$, then
\begin{enumerate}

\item Let $\mathcal{U}\in
\mathcal{C}_X$ and $\alpha\in \mathcal{P}_X$ such that each
element of $\mathcal{U}$ has a non-empty intersection with at most
$M$ elements of $\alpha$ ($M\in \mathbb{N}$). If
$f_{T,\alpha}^{\mathcal{C}}(x)$ is the function obtained in
Theorem \ref{SMB-conditional version} for $T$, $\alpha$ and
$\mathcal{C}$, then for $\mu$-a.e. $x\in X$,
\begin{align*}
h^B_\mathcal{U} (T, Z_x)\ge f_{T,\alpha}^{\mathcal{C}} (x)- \log M
\end{align*}
for any $Z_x\in \mathcal{B}_X$ with $\mu_x(Z_x)>0$. Particularly,
if $\mu$ is ergodic, then for $\mu$-a.e. $x\in X$,
\begin{align*}
h^B_\mathcal{U} (T, Z_x)\ge h_\mu(T,\alpha|\mathcal{C})-\log M
\end{align*}
for any $Z_x\in \mathcal{B}_X$ with $\mu_x(Z_x)>0$.

\item If $\mu$ is ergodic then, for $\mu$-a.e. $x\in X$, when $Z_x\in
\mathcal{B}_X$ with $\mu_x (Z_x)> 0$ one has $$h^B (T, Z_x)\ge h_\mu
(T, X| \mathcal{C}).$$
\end{enumerate}
\end{prop}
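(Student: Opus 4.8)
The plan is to derive (2) from the ergodic form of part (1) by taking a supremum over open covers on the left and pushing the partition entropy up to $h_\mu(T,X|\mathcal{C})$ on the right, using $h^B(T,Z_x)=\sup_{\mathcal{U}\in\mathcal{C}^o_X}h^B_{\mathcal{U}}(T,Z_x)$ together with $h_\mu(T,X|\mathcal{C})=\sup_{\alpha\in\mathcal{P}_X}h_\mu(T,\alpha|\mathcal{C})$. First I would fix a countable family $\{\alpha_j\}\subseteq\mathcal{P}_X$ with $\mathrm{diam}(\alpha_j)\to 0$ and, for each $j$, an open cover $\mathcal{U}_j\in\mathcal{C}^o_X$ of mesh small enough that each of its elements meets at most $M_j$ elements of $\alpha_j$, so that part (1) applies to every pair $(\alpha_j,\mathcal{U}_j)$. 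Since the ergodic form of part (1) already yields its conclusion for \emph{all} $Z_x$ with $\mu_x(Z_x)>0$ on a single $\mu$-full set (the relevant functions being Borel by Proposition \ref{hyper-function-measurable}), I would intersect these countably many full-measure sets to obtain one $\mu$-full set $X_0$ on which every inequality coming from the pairs $(\alpha_j,\mathcal{U}_j)$ holds simultaneously. This disposes of the measure-theoretic bookkeeping and reduces the statement to a deterministic inequality for each $x\in X_0$.

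For that deterministic inequality, fix $x\in X_0$ and $Z_x$ with $\mu_x(Z_x)>0$. Part (1) gives $h^B(T,Z_x)\ge h^B_{\mathcal{U}_j}(T,Z_x)\ge h_\mu(T,\alpha_j|\mathcal{C})-\log M_j$ for every $j$, and letting $\mathrm{diam}(\alpha_j)\to 0$ drives $h_\mu(T,\alpha_j|\mathcal{C})\uparrow h_\mu(T,X|\mathcal{C})$. Thus this step by itself only delivers $h^B(T,Z_x)\ge h_\mu(T,X|\mathcal{C})-\inf_j\log M_j$. The residual term $\log M_j$ is the sole obstruction to the clean bound, and it cannot be absorbed at the level of the single map $T$, because $h_\mu(T,\alpha_j|\mathcal{C})\le h_\mu(T,X|\mathcal{C})$ forces the incidence constant $M_j\ge 2$.

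To remove $\log M$ I would run the same argument inside the powers $(X,T^m)$ and amortise, using the scalings $h^B(T^m,Z_x)=m\,h^B(T,Z_x)$ from Proposition \ref{06.02.28}(4) and $h_\mu(T^m,X|\mathcal{C})=m\,h_\mu(T,X|\mathcal{C})$, which follows from the identity $h_\mu(T,\mathcal{U}|\mathcal{C})=\frac1m h_\mu(T^m,\mathcal{U}_0^{m-1}|\mathcal{C})$ by taking a supremum over $\mathcal{U}$. Applying part (1) in $(X,T^m)$ with a partition $\beta_m$ satisfying $\frac1m h_\mu(T^m,\beta_m|\mathcal{C})\ge h_\mu(T,X|\mathcal{C})-\epsilon$ and a compatible fine cover of incidence $M_m$ yields $h^B(T,Z_x)\ge h_\mu(T,X|\mathcal{C})-\epsilon-\frac{\log M_m}{m}$; letting $m\to\infty$ and then $\epsilon\to 0$ finishes the proof, \emph{provided} $\frac{\log M_m}{m}\to 0$. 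Securing this subexponential growth of the incidence is the main obstacle: the naive choice $\beta_m=\alpha_0^{m-1}$ with a refined cover makes $M_m$ as large as $M^m$, merely reproducing the lossy bound. The way I would control it is to take each $\alpha$ with $\mu(\partial\alpha)=0$ and estimate the incidence along a $\mu$-typical orbit: by the ergodic theorem the number of indices $i<m$ for which $T^ix$ lies in a shrinking neighbourhood of $\partial\alpha$ is $o(m)$, so the effective number of $\beta_m$-atoms met by a cover element along that orbit is $e^{o(m)}$ rather than $M^m$. This converts the uniform $\log M$ appearing in the Non-Uniform Mass Distribution Principle (Lemma \ref{non-uniform}) into a term that vanishes after division by $m$, and is the technical heart of the argument.
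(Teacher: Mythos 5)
Your overall strategy is the paper's: pass to powers $T^m$, apply part (1) there, and amortise the incidence defect as $\frac{\log M_m}{m}$. The reduction bookkeeping (countable intersection of full-measure sets, the scalings $h^B(T^m,Z_x)=m\,h^B(T,Z_x)$ and $h_\mu(T^m,X|\mathcal{C})=m\,h_\mu(T,X|\mathcal{C})$) is fine. But there are two genuine gaps. First, you invoke the \emph{ergodic} form of part (1) in $(X,T^m)$, and $\mu$ ergodic for $T$ need not be ergodic for $T^m$. The non-ergodic form only provides the $T^m$-invariant function $f^{\mathcal{C}}_{T^m,\beta_m}$ of Theorem \ref{SMB-conditional version}, whose value at a given $x$ may lie strictly below its integral $h_\mu(T^m,\beta_m|\mathcal{C})$ on a set of positive measure, so your pointwise bound $h^B(T,Z_x)\ge h_\mu(T,X|\mathcal{C})-\epsilon-\frac{\log M_m}{m}$ does not follow as written. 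The paper closes exactly this hole by an averaging trick you would need to add: $g^k_n=\frac1n\sum_{i=0}^{n-1}f^{\mathcal{C}}_{T^n,\alpha_{n,k}}\circ T^i$ is $T$-invariant, hence constant $=h_\mu(T^n,\alpha_{n,k}|\mathcal{C})$ by $T$-ergodicity, and the bounds at the shifted points are transported back to $Z_x$ via $T\mu_x=\mu_{Tx}$ (so $\mu_{T^ix}(T^iZ_x)>0$) combined with $h^B(T^n,Z_x)\ge h^B(T^n,T^iZ_x)$ from Proposition \ref{06.02.28}(3).

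Second, your mechanism for securing $\frac{\log M_m}{m}\to 0$ is a sketch, not a proof, and it cannot be fed into part (1) as it stands: both part (1) and Lemma \ref{non-uniform} carry a \emph{uniform} incidence hypothesis (every element of the cover meets at most $M$ atoms), so an ``effective incidence $e^{o(m)}$ along $\mu$-typical orbits'' would force you to reopen and reprove the mass distribution principle with an orbitwise count. That in turn requires partitions with $\mu$-null boundaries, an Egorov-type uniformisation of the Birkhoff averages of $1_{\partial_\delta\alpha}$ over the sets $Z^N_x(\delta)$ occurring inside the proof of (1), and the fact that $\mu_x$-a.e.\ point is $\mu$-generic for $\mu$-a.e.\ $x$; moreover the visit frequency only gives $M^{c\,\mu(\partial_\delta\alpha)\,m}$, so yet another limit in $\delta$ is needed before you reach $e^{o(m)}$. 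The paper dispenses with all dynamical input here by a purely combinatorial device, Bowen's Lemma 2 of \cite{B3}: for each $m,k$ there is $\alpha_{m,k}\in\mathcal{P}_X$ with $\alpha_{m,k}\succeq(\mathcal{U}_k)_0^{m-1}$ such that at most $m\#\mathcal{U}_k$ of its elements share a point in all their closures, whence a cover $\mathcal{U}_{m,k}$ with \emph{uniform} incidence $M_m=m\#\mathcal{U}_k$ --- linear in $m$ --- so $\frac{\log M_m}{m}\to0$ is immediate and part (1) applies verbatim. Your plan is salvageable, but both repairs are needed, and the second essentially amounts to replacing your ergodic-theorem heuristic by Bowen's lemma (or carrying out substantially more work than you indicate).
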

\begin{proof}
(1) Note that $\lim_{n\rightarrow +\infty}\frac{I_\mu (\alpha_0^{n-
1}| \mathcal{C}) (x)}{n}=f_{T,\alpha}^{\mathcal{C}}(x)$ for
$\mu$-a.e. $x\in X$ and $f_{T,\alpha}^{\mathcal{C}}$ is
$\mathcal{C}$-measurable, using \eqref{meas4} for all $1_B$, $B\in
\alpha_0^{n- 1}$ and \eqref{meas5} for $f_{T,\alpha}^{\mathcal{C}}$,
there exists $X_\infty\in \mathcal{B}_X$ with $\mu (X_\infty)= 1$
such that for each $x\in X_\infty$, one can find $W_x\in
\mathcal{B}_X$ with $\mu_x(W_x)=1$ and if $y\in W_x$ then
\begin{enumerate}

\item[(a).]  $\lim_{n\rightarrow +\infty} \frac{I_\mu (\alpha_0^{n- 1}|
\mathcal{C})
(y)}{n}=f_{T,\alpha}^{\mathcal{C}}(y)=f_{T,\alpha}^{\mathcal{C}}(x)$.

\item[(b).] $\mathbb{E}_\mu (1_B| \mathcal{C}) (y)=\mathbb{E}_\mu (1_B| \mathcal{C})
(x)=\mu_x (B)$ for any $B\in \alpha_0^{n- 1}$ and each $n\in
\mathbb{N}$.
\end{enumerate}
Moreover, for any $y\in W_x$, where $x\in X_\infty$, one has
\begin{align}\label{ee-kkk-gs}
\lim_{n\rightarrow +\infty} \frac{-\log
\mu_x(\alpha_0^{n-1}(y))}{n}&=\lim_{n\rightarrow +\infty}
\frac{-\log \mathbb{E}_\mu (1_{\alpha_0^{n-1}(y)}| \mathcal{C})
(y)}{n}\\
&=\lim_{n\rightarrow +\infty} \frac{I_\mu (\alpha_0^{n- 1}|
\mathcal{C}) (y)}{n}=f_{T,\alpha}^{\mathcal{C}}(x). \nonumber
\end{align}

For a given $x\in X_\infty$ let $Z_x\in \mathcal{B}_X$ with
$\mu_x(Z_x)>0$. Clearly $\mu_x(Z_x\cap W_x)=\mu_x(Z_x)>0$. For
$\delta>0$ and $\ell\in \mathbb{N}$, we define
$$Z^\ell_x(\delta)=\{y \in Z_x\cap W_x:
\mu_x(\alpha_0^{n-1}(y))\le e^{-n(f_{T,\alpha}^{\mathcal{C}}(x)-
\delta)}\ \text{for each}\ n\ge \ell\}.
$$
Then $\bigcup_{\ell=1}^{+\infty}Z^\ell_x(\delta)=Z_x\cap W_x$ by
\eqref{ee-kkk-gs}. Hence there exists $N\in \mathbb{N}$ such that
$\mu_x (Z^N_x(\delta))> 0$. For $y\in Z^N_x(\delta)$, as
$\mu_x(\alpha_0^{n-1}(y))\le e^{-n(f_{T,\alpha}^{\mathcal{C}}(x)-
\delta)}$ for each $n\ge N$, one has
\begin{equation*}
\mu_x (\alpha_0^{n- 1} (y))\le c(y)
e^{-n(f_{T,\alpha}^{\mathcal{C}}(x)- \delta)} \text{ for any }
n\in \mathbb{N},
\end{equation*}
where $c(y)=\max\{ 1,\sum_{i=1}^{N-1}
e^{i(f_{T,\alpha}^{\mathcal{C}}(x)- \delta)}\}\in (0,+\infty)$. Thus
applying Lemma \ref{non-uniform} to $Z^N_x(\delta)$ we obtain
$h^B_\mathcal{U} (T, Z^N_x(\delta))\ge
f_{T,\alpha}^{\mathcal{C}}(x)- \delta- \log M$, so $h^B_\mathcal{U}
(T, Z_x)\ge f_{T,\alpha}^{\mathcal{C}}(x)- \delta- \log M$. Note
that the last inequality is true for any $\delta>0$, one has
$h^B_\mathcal{U} (T, Z_x)\ge f_{T,\alpha}^{\mathcal{C}}(x)- \log M$.

\medskip
(2) For $k\in \mathbb{N}$ we take $\mathcal{U}_k\in \mathcal{C}_X^o$
with $||\mathcal{U}_k||\le \frac{1}{k}$. Using \cite[Lemma 2]{B3}
for each $n\in \mathbb{N}$ there exists $\alpha_{n, k}\in
\mathcal{P}_X$ such that $\alpha_{n,k}\succeq
(\mathcal{U}_k)_{0}^{n-1}$ and at most $n\# \mathcal{U}_k$ elements
of $\alpha_{n,k}$ can have a point in all their closures, here $\#
\mathcal{U}_k$ means the cardinality of $\mathcal{U}_k$. It's easy
to construct $\mathcal{U}_{n, k}\in \mathcal{C}_X^o$ such that each
element of $\mathcal{U}_{n, k}$ has a non-empty intersection with at
most $n \# \mathcal{U}_k$ elements of $\alpha_{n,k}$ (see also
\cite[Lemma 1]{B3}).

Now assume that $\mu$ is ergodic. Let
$f_{T^n,\alpha_{n,k}}^{\mathcal{C}} (x)$ be the function obtained in
Theorem \ref{SMB-conditional version} for $T^n$, $\alpha_{n,k}$ and
$\mathcal{C}$. Then $f_{T^n,\alpha_{n,k}}^{\mathcal{C}} (x)$ is
$T^n$-invariant and $\int_X f_{T^n,\alpha_{n,k}}^{\mathcal{C}} (x)
d\mu (x)= h_\mu (T^n, \alpha_{n,k}| \mathcal{C})$. Let
$g^k_n(x)=\frac{1}{n}
\sum_{i=0}^{n-1}f_{T^n,\alpha_{n,k}}^{\mathcal{C}} (T^ix)$. Then
$g^k_n(x)$ is $T$-invariant, as $f_{T^n,\alpha_{n,k}}^{\mathcal{C}}
(x)$ is $T^n$-invariant. Moreover, since $\mu\in
\mathcal{M}^e(X,T)$, $g^k_n(x)$ is constant and
\begin{eqnarray} \label{star}
g^k_n(x)&\equiv& \int_X g^k_n(y) d
\mu(y)=\frac{1}{n}\sum_{i=0}^{n-1}\int_X
f_{T^n,\alpha_{n,k}}^{\mathcal{C}} (T^iy) d
\mu(y)\nonumber \\
&=&\frac{1}{n}\sum_{i=0}^{n-1}\int_X
f_{T^n,\alpha_{n,k}}^{\mathcal{C}} (y) d \mu(y)= h_\mu (T^n,
\alpha_{n,k}| \mathcal{C})
\end{eqnarray}
 for $\mu$-a.e. $x\in X$. By (1) for $\mu$-a.e. $x\in X$, if
 $Z_x\in \mathcal{B}_X$ with $\mu_x(Z_x)>0$ then
\begin{equation}\label{eq-all-1}
h^B_{\mathcal{U}_{n,k}} (T^n, Z_x)\ge
f_{T^n,\alpha_{n,k}}^{\mathcal{C}}(x)- \log (n \# \mathcal{U}_k)
\text{ for each }k\in \mathbb{N},\ n\in \mathbb{N}.
\end{equation}
Moreover, note that $T\mu_x=\mu_{Tx}$ for $\mu$-a.e. $x\in X$, there
exists a $T$-invariant subset $X_1\subseteq X$ with $\mu(X_1)=1$
such that $T\mu_x=\mu_{Tx}$ and both \eqref{star} and
\eqref{eq-all-1} hold for all $x\in X_1$.

Now for any given $x\in X_1$ let $Z_x\in \mathcal{B}_X$ with
$\mu_x(Z_x)>0$. Then $T^ix\in X_1$ and $\mu_{T^ix}(T^i
Z_x)=T^i\mu_x(T^iZ_x)=\mu_x(Z_x)>0$ for any $i\ge 0$. By
\eqref{eq-all-1}, for each $k\in \mathbb{N}$, $n\in \mathbb{N}$ and
$i\ge 0$,
$$h^B(T^n, T^iZ_x)\ge h^B_{\mathcal{U}_{n,k}} (T^n, T^iZ_x)\ge
f_{T^n,\alpha_{n,k}}^{\mathcal{C}}(T^ix)- \log (n \#
\mathcal{U}_k).
$$
For each $n\in \mathbb{N}$, as $h^B(T^n,Z_x)\ge h^B(T^n,T^iZ_x)$
for each $i\ge 0$ (see Proposition \ref{06.02.28} (3)), we have
\begin{eqnarray*}
h^B(T^n,Z_x)&\ge &   \frac{1}{n}\sum_{i=0}^{n-1} h^B(T^n,T^iZ_x)
\ge \frac{1}{n}\sum_{i=0}^{n-1} \left (
f_{T^n,\alpha_{n,k}}^{\mathcal{C}}(T^ix)- \log (n \#
\mathcal{U}_k) \right)\\
&=& g^k_n(x)-\log (n \# \mathcal{U}_k)=h_\mu (T^n, \alpha_{n,k}|
\mathcal{C})-\log (n \# \mathcal{U}_k)
\end{eqnarray*}
for all $k\in \mathbb{N}$. Then using Proposition \ref{06.02.28}
(4) we have
\begin{eqnarray*}
h^B (T, Z_x)&=& \frac{h^B (T^n, Z_x)}{n}\ge \frac{1}{n}
(h_\mu (T^n,
\alpha_{n,k}| \mathcal{C})-\log (n \#
\mathcal{U}_k))\\
&\ge & \frac{1}{n} ( h_\mu (T^n, (\mathcal{U}_k)_{0}^{n-1}|
\mathcal{C})-\log (n \# \mathcal{U}_k))=h_\mu (T, \mathcal{U}_k|
\mathcal{C})-\frac{\log (n \# \mathcal{U}_k)}{n}
\end{eqnarray*}
for each $k\in \mathbb{N}$ and $n\in \mathbb{N}$. Now fixing $k\in
\mathbb{N}$ letting $n\rightarrow +\infty$ we get $h^B (T, Z_x)\ge
h_\mu (T, \mathcal{U}_k| \mathcal{C})$. Finally letting
$k\rightarrow +\infty$ we have $h^B (T, Z_x)\ge \lim_{k\rightarrow
+\infty}h_\mu (T, \mathcal{U}_k|
\mathcal{C})=h_\mu(T,X|\mathcal{C})$. This completes the proof of
(2) since $\mu(X_1)=1$.
\end{proof}

The following result is an application of Proposition \ref{more
estimate-not measurable}.

\begin{lem} \label{lemma for cover}
Let $(X, T)$ be a TDS, $\mu\in \mathcal{M}^e(X, T)$ and
$\mathcal{C}\subseteq \mathcal{B}_\mu$ a $T$-invariant
sub-$\sigma$-algebra. If $\mu= \int_X \mu_x d \mu (x)$ is the
disintegration of $\mu$ over $\mathcal{C}$, then
\begin{enumerate}

\item If $\alpha\in \mathcal{P}_X$ then for $\mu$-a.e. $x\in
X$, fixing each $x$, for each $\epsilon\in (0,1)$ there exists a
compact subset $Z_x (\alpha, \epsilon)$ of $X$ such that $\mu_x
(Z_x (\alpha,\epsilon))\ge 1- \epsilon$ and
\begin{equation*}
h^B_\alpha (T, Z_x (\alpha, \epsilon))= h_\alpha (T, Z_x (\alpha,
\epsilon))= h_\mu (T, \alpha| \mathcal{C}).
\end{equation*}

\item For $\mu$-a.e. $x\in
X$, fixing each $x$, for each $\epsilon\in (0,1)$ there exists a
compact subset $Z_x (\epsilon)$ of $X$ such that $\mu_x (Z_x
(\epsilon))\ge 1- \epsilon$ and
\begin{equation*}
h^B (T, Z_x (\epsilon))= h (T, Z_x (\epsilon))= h_\mu (T, X|
\mathcal{C}).
\end{equation*}
\end{enumerate}
\end{lem}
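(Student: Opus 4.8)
The plan is to trap each target quantity between the lower bounds already supplied by Proposition \ref{more estimate-not measurable} and an upper bound produced by a mass-distribution argument, so that the two coincide and force the asserted equalities. Since $\mu$ is ergodic, Theorem \ref{SMB-conditional version} and the identity \eqref{ee-kkk-gs} give, for $\mu$-a.e. $x$, that $f_{T,\alpha}^{\mathcal{C}}(x)=h_\mu(T,\alpha|\mathcal{C})$ and that $\frac{-\log\mu_x(\alpha_0^{n-1}(y))}{n}\to h_\mu(T,\alpha|\mathcal{C})$ for $\mu_x$-a.e.\ $y$; as only countably many partitions will enter the argument, I would fix once and for all a single $T$-invariant full-measure set $G$ serving all of them. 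For any Borel $Z$ with $\mu_x(Z)>0$, Proposition \ref{more estimate-not measurable}(1) applied to $\mathcal{U}=\alpha$ (so $M=1$) gives $h^B_\alpha(T,Z)\ge h_\mu(T,\alpha|\mathcal{C})$, while Proposition \ref{more estimate-not measurable}(2) gives $h^B(T,Z)\ge h_\mu(T,X|\mathcal{C})$. Since $h^B_\mathcal{U}(T,\cdot)\le h_\mathcal{U}(T,\cdot)$ and $h^B(T,\cdot)\le h(T,\cdot)$ always, only the opposite inequalities, on sets of large $\mu_x$-measure, remain to be produced.

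The basic building block is: for $x\in G$, a partition $\alpha\in\mathcal{P}_X$, and $\delta,\eta>0$, construct a Borel $Y$ with $\mu_x(Y)\ge 1-\eta$ and $h_\alpha(T,Y)\le h_\mu(T,\alpha|\mathcal{C})+\delta$. Writing $h=h_\mu(T,\alpha|\mathcal{C})$ and $Y^\ell=\{y:\ \mu_x(\alpha_0^{n-1}(y))\ge e^{-n(h+\delta)}\ \text{for all}\ n\ge\ell\}$, the pointwise convergence forces $\mu_x(Y^\ell)\uparrow 1$, so some $Y=Y^N$ has $\mu_x(Y)\ge 1-\eta$; on $Y$ one has the uniform bound $\mu_x(\alpha_0^{n-1}(y))\ge c\,e^{-n(h+\delta)}$ for all $n$, the finitely many $n<N$ being absorbed into $c=e^{-N(h+\delta)}$, and Lemma \ref{uniform} then gives $h_\alpha(T,Y)\le h+\delta$. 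For part (1) I apply this with $\eta=\epsilon/2^{m+1}$, $\delta=1/m$ to get $Y_m$ and set $Z=\bigcap_m Y_m$; monotonicity of $h_\alpha(T,\cdot)$ under inclusion yields $h_\alpha(T,Z)\le h+1/m$ for every $m$, hence $h_\alpha(T,Z)\le h$, with $\mu_x(Z)\ge 1-\epsilon/2$. Passing by inner regularity of $\mu_x$ to a compact $Z_x(\alpha,\epsilon)\subseteq Z$ with $\mu_x(Z_x(\alpha,\epsilon))\ge 1-\epsilon$ preserves the upper bound and keeps the measure positive, so $h\le h^B_\alpha(T,Z_x(\alpha,\epsilon))\le h_\alpha(T,Z_x(\alpha,\epsilon))\le h$ closes, proving (1).

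For part (2) the extra ingredient is to pass from a fixed partition to the full topological entropy. I would fix open covers $\mathcal{V}_k\in\mathcal{C}^o_X$ with $\|\mathcal{V}_k\|\to 0$, so that $h(T,Z)=\lim_k h_{\mathcal{V}_k}(T,Z)$ for every $Z$, and for each $k$ take a partition $\beta_k\in\mathcal{P}_X$ refining $\mathcal{V}_k$ (e.g.\ obtained by successively subtracting the members of $\mathcal{V}_k$), so that $\beta_k\succeq\mathcal{V}_k$ and $h_\mu(T,\beta_k|\mathcal{C})\le h_\mu(T,X|\mathcal{C})$. Applying the building block to $\beta_k$ with $\eta=\epsilon/2^{k+1}$, $\delta=1/k$ gives $Y_k$ with $\mu_x(Y_k)\ge 1-\epsilon/2^{k+1}$ and $h_{\beta_k}(T,Y_k)\le h_\mu(T,X|\mathcal{C})+1/k$. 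Setting $Z=\bigcap_k Y_k$, monotonicity together with $\beta_k\succeq\mathcal{V}_k$ gives $h_{\mathcal{V}_k}(T,Z)\le h_{\beta_k}(T,Z)\le h_\mu(T,X|\mathcal{C})+1/k$ for every $k$, and $k\to\infty$ yields $h(T,Z)\le h_\mu(T,X|\mathcal{C})$. A compact $Z_x(\epsilon)\subseteq Z$ of $\mu_x$-measure $\ge 1-\epsilon$, combined with the lower bound of Proposition \ref{more estimate-not measurable}(2), closes the sandwich as before.

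The step I expect to be most delicate is exactly this replacement of each open cover $\mathcal{V}_k$ by a single partition $\beta_k$ whose entropy I can control, since $h_\mu(T,\cdot|\mathcal{C})$ is defined through a per-level infimum over refining partitions. What rescues the argument is that I never need $h_\mu(T,\beta_k|\mathcal{C})$ to approximate $h_\mu(T,\mathcal{V}_k|\mathcal{C})$: the crude estimate $h_\mu(T,\beta_k|\mathcal{C})\le\sup_{\alpha}h_\mu(T,\alpha|\mathcal{C})=h_\mu(T,X|\mathcal{C})$ suffices, as the slack is pushed into the additive $1/k$ that vanishes along the refining sequence. The remaining care is bookkeeping: choosing the exceptional null set once for the countable family $\{\beta_k\}$ and $\{\mathcal{V}_k\}$, and verifying that passage to compact subsets degrades neither the upper entropy bound (monotonicity) nor the positivity of $\mu_x$-measure required by the lower bounds.
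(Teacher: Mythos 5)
Your proposal is correct and takes essentially the same approach as the paper: the lower bounds come from Proposition \ref{more estimate-not measurable} (with $M=1$ for a partition $\alpha$), the upper bounds from the conditional SMB convergence \eqref{ee-kkk-gs} together with the Uniform Mass Distribution Principle (Lemma \ref{uniform}), intersecting over a sequence of tolerances and then extracting a compact subset, and in part (2) using mesh-to-zero open covers refined by partitions with exactly the crude bound $h_\mu(T,\beta_k|\mathcal{C})\le h_\mu(T,X|\mathcal{C})$ that the paper also relies on. The only differences are cosmetic: you take a single uniform constant $c=e^{-N(h+\delta)}$ on the level set where the paper uses the function $c_m(y)$ with its level sets $Z_k^m$, and you pass to a compact set once at the end by inner regularity, where the paper chooses compact pieces $B_\epsilon^m$ at each stage and intersects them.
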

\begin{proof}
(1) Let $\alpha\in \mathcal{P}_X$. As $\mu\in \mathcal{M}^e (X, T)$,
by \eqref{ee-kkk-gs} there exists $X_\infty\in \mathcal{B}_X$ with
$\mu (X_\infty)= 1$ such that for each $x\in X_\infty$,  one can
find $W_x\in \mathcal{B}_X$ with $\mu_x(W_x)=1$ such that for each
$y\in W_x$
\begin{equation*}
\lim_{n\rightarrow +\infty} - \frac{1}{n} \log \mu_x (\alpha_0^{n-
1} (y))= h_\mu (T, \alpha| \mathcal{C})
\end{equation*}
(for details see the proof of Proposition \ref{more estimate-not
measurable} (1)). By Proposition \ref{more estimate-not measurable}
(1), w.l.g. we may require
\begin{equation}\label{req-11}
h^B_\alpha (T, Z)\ge  h_\mu (T, \alpha| \mathcal{C})
\end{equation}
for any $x\in X_\infty$ and $Z\in \mathcal{B}_X$ with $\mu_x(Z)>0$
(if necessary we take a  subset of $X_\infty$).

Let $x\in X_\infty$.  Obviously, for each $y\in W_x$ and $m\in
\mathbb{N}$ there exists $n_{y,m}\in \mathbb{N}$ such that if $n\ge
n_{y,m}$ then $- \frac{1}{n} \log \mu_x (\alpha_0^{n- 1} (y))\le
h_\mu (T, \alpha| \mathcal{C})+ \frac{1}{m}$, i.e. $\mu_x
(\alpha_0^{n- 1} (y))\ge e^{- n (h_\mu (T, \alpha| \mathcal{C})+
\frac{1}{m})}$. So for each $n\in \mathbb{N}$,
\begin{equation*}
 \mu_x (\alpha_0^{n- 1} (y))\ge
e^{- (n+ n_{y,m}) (h_\mu (T, \alpha| \mathcal{C})+ \frac{1}{m})}.
\end{equation*}
 We introduce a $\mu_x$-measurable function by defining for
$\mu_x$-a.e. $y\in X$
\begin{equation*}
c_m(y)= \inf_{n\in \mathbb{N}} \frac{\mu_x (\alpha_0^{n- 1}
(y))}{e^{- n (h_\mu (T, \alpha| \mathcal{C})+ \frac{1}{m})}}\ge
e^{- n_{y,m} (h_\mu (T, \alpha| \mathcal{C})+ \frac{1}{m})}> 0.
\end{equation*}
Let $Z_{k}^m= \{y\in W_x: c_m(y)\ge \frac{1}{k}\}$ for each $k\in
\mathbb{N}$. Then $Z_k^m$ is $\mu_x$-measurable and $h_\alpha (T,
Z_{k}^m)\le h_\mu (T, \alpha| \mathcal{C})+ \frac{1}{m}$ by Lemma
\ref{uniform}. Moreover, as $\lim_{k\rightarrow +\infty}\mu_x
(Z_{k}^m)=1$,  for each $\epsilon\in (0,1)$ there exists a compact
subset $B_{\epsilon}^m\subseteq Z_{K}^m$ for some $K\in \mathbb{N}$
such that $\mu_x (X\setminus B_\epsilon^m)< \frac{\epsilon}{2^m}$
and $h_\alpha (T, B_\epsilon^m)\le h_\mu (T, \alpha| \mathcal{C})+
\frac{1}{m}$.

For $\epsilon\in (0,1)$, set $Z_x (\alpha, \epsilon)=
\bigcap_{m\in \mathbb{N}} B_\epsilon^m$. Then $Z_x (\alpha,
\epsilon)$ is a compact subset of $X$,
$$\mu_x (Z_x (\alpha, \epsilon))=1-
\mu \left(\bigcup_{m\in \mathbb{N}} X\setminus
B_\epsilon^m\right)\ge 1-\sum_{m\in \mathbb{N}}\mu(X\setminus
B_\epsilon^m )\ge 1-\epsilon>0
$$ and
$$\ h_\alpha (T, Z_x (\alpha, \epsilon))\le \inf_{m\in
\mathbb{N}}h_\alpha (T, B_\epsilon^m)\le \inf_{m\in \mathbb{N}}
\left(h_\mu (T, \alpha| \mathcal{C})+\frac{1}{m}\right)=
h_\mu(T,\alpha|\mathcal{C}).$$ Moreover, using Lemma \ref{bridge}
and \eqref{req-11} we have
$$h^B_\alpha (T, Z_x (\alpha, \epsilon))= h_\alpha (T, Z_x (\alpha,
\epsilon))= h_\mu (T, \alpha| \mathcal{C}).$$

(2) Let $\{\mathcal{U}_n\}_{n\in \mathbb{N}}\subseteq
\mathcal{C}_X^o$ with $\lim_{n\rightarrow +\infty}
||\mathcal{U}_n||=0$. For $n\in \mathbb{N}$ we take $\alpha_n\in
\mathcal{P}_X$ with $\alpha_n\succeq \mathcal{U}_n$. By (1) there
exists a measurable subset $X'$ of $X$ with $\mu(X')=1$ such that if
$x\in X'$ then
 for each $\epsilon\in (0,1)$ and $n\in \mathbb{N}$ there exists
a compact subset $Z_{x}(n,\epsilon)$ such that
\begin{equation*}
\mu_x (Z_{x}(n,\epsilon))\ge 1- \frac{\epsilon}{2^n}\ \text{ and
}\ h_{\mathcal{U}_n} (T, Z_{x}(n,\epsilon))\le h_{\alpha_n} (T,
Z_{x}(n,\epsilon))=h_\mu (T, \alpha_n| \mathcal{C}).
\end{equation*}
By Proposition \ref{more estimate-not measurable} (2), w.l.g. (if
necessary we take a subset of $X'$) we may require
\begin{equation}\label{req-12}
h^B(T, Z)\ge  h_\mu (T, X| \mathcal{C})
\end{equation}
for any $x\in X'$ and $Z\in \mathcal{B}_X$ with $\mu_x(Z)>0$.

Let $x\in X'$. For $\epsilon\in (0,1)$, Set $Z_x (\epsilon)=
\bigcap_{n\in \mathbb{N}} Z_x(n,\epsilon)$. Then $Z_x (\epsilon)$ is
a compact subset of $X$,
$$\mu_x (Z_x (\epsilon))=1-
\mu \left(\bigcup_{n\in \mathbb{N}} X\setminus Z_x(n,\epsilon)
\right)\ge 1-\sum_{n\in \mathbb{N}}\mu(X\setminus
Z_x(n,\epsilon))\ge 1-\epsilon>0
$$ and
\begin{align*}
h (T, Z_x (\epsilon))&=\sup_{n\in
\mathbb{N}}h_{\mathcal{U}_n}(T,Z_x (\epsilon) )\le \sup_{n\in
\mathbb{N}}h_{\mathcal{U}_n}(T,Z_x (n,\epsilon) )\\
&\le \sup_{n\in \mathbb{N}} h_\mu (T, \alpha_n| \mathcal{C})\le
h_\mu(T,X|\mathcal{C}).
\end{align*}
 Moreover, using Lemma \ref{bridge} and
\eqref{req-12} we have if $x\in X'$ then $h^B (T, Z_x (\epsilon))= h
(T, Z_x (\epsilon))= h_\mu (T, X| \mathcal{C})$. This finishes the
proof of (2) since $\mu (X')= 1$.
\end{proof}

With the above preparations we can obtain the main result of this
section.

\begin{thm} \label{aim of paper}
Let $(X, T)$ be a TDS with finite entropy. Then for each $0\le h\le
h_{\text{top}} (T, X)$ there exists a non-empty compact subset $K_h$
of $X$ such that $h^B (T, K_h)= h (T, K_h)= h$. In particular, $(X,
T)$ is lowerable.
\end{thm}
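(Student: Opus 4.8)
The plan is to reduce the theorem to a statement about realizing entropy values as \emph{relative measure-theoretic entropies}, and then to feed those into Lemma~\ref{lemma for cover}~(2). That lemma already manufactures, for any ergodic $\mu\in\mathcal{M}^e(X,T)$ and any $T$-invariant sub-$\sigma$-algebra $\mathcal{C}\subseteq\mathcal{B}_\mu$, a non-empty compact set $Z$ with
$$h^B(T,Z)=h(T,Z)=h_\mu(T,X|\mathcal{C}).$$
Thus it suffices to show that every value $h\in[0,h_{\text{top}}(T,X)]$ arises as some $h_\mu(T,X|\mathcal{C})$, after which we simply take $K_h=Z$. So the whole problem is to prove that the quantities $h_\mu(T,X|\mathcal{C})$ exhaust the interval $[0,h_{\text{top}}(T,X)]$.

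First I would dispose of the endpoint $h=0$ directly: for any $x_0\in X$ the singleton $\{x_0\}$ satisfies $N(\mathcal{U}_0^{n-1},\{x_0\})=1$ for every $\mathcal{U}$ and $n$, so $h(T,\{x_0\})=0$, and $h^B(T,\{x_0\})=0$ since finite sets have vanishing dimensional entropy. For $0<h<h_{\text{top}}(T,X)$ the variational principle supplies an ergodic measure $\mu$ with $H:=h_\mu(T,X)>h$ (possible because the supremum of $h_\mu(T,X)$ over ergodic $\mu$ equals $h_{\text{top}}(T,X)>h$). The heart of the matter is then to produce a $T$-invariant sub-$\sigma$-algebra $\mathcal{C}$ with $h_\mu(T,X|\mathcal{C})=h$ exactly. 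Here I would use that the map $\mathcal{C}\mapsto h_\mu(T,X|\mathcal{C})$ is onto $[0,H]$: by Sinai's factor theorem the ergodic system $(X,\mathcal{B}_\mu,\mu,T)$ admits a Bernoulli factor of entropy $H-h$, and letting $\mathcal{C}$ be the pull-back of that factor's $\sigma$-algebra (which is $T$-invariant since $T$ is invertible) the Abramov--Rokhlin addition formula gives $h_\mu(T,X)=(H-h)+h_\mu(T,X|\mathcal{C})$, whence $h_\mu(T,X|\mathcal{C})=h$. Applying Lemma~\ref{lemma for cover}~(2) to this $\mu$ and $\mathcal{C}$ yields the required $K_h$, which is already compact so no extra closure is needed.

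For the remaining endpoint $h=h_{\text{top}}(T,X)$ a limiting argument is cleaner than insisting on a measure of maximal entropy, which need not exist. I would choose ergodic measures $\mu_n$ with $h_{\mu_n}(T,X)\nearrow h_{\text{top}}(T,X)$ and, taking $\mathcal{C}=\{\emptyset,X\}$ in Lemma~\ref{lemma for cover}~(2), obtain compact sets $K_n$ with $h^B(T,K_n)=h(T,K_n)=h_{\mu_n}(T,X)$. Setting $K=\overline{\bigcup_n K_n}$, the countable stability of dimensional entropy (Proposition~\ref{06.02.28}~(2)) gives $h^B(T,\bigcup_n K_n)=\sup_n h_{\mu_n}(T,X)=h_{\text{top}}(T,X)$, and monotonicity together with $h^B(T,X)=h(T,X)=h_{\text{top}}(T,X)$ (Proposition~\ref{06.02.28}~(1)) squeezes $h^B(T,K)=h_{\text{top}}(T,X)$; finally $h^B(T,K)\le h(T,K)\le h(T,X)=h_{\text{top}}(T,X)$ forces $h(T,K)=h_{\text{top}}(T,X)$ as well.

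The step I expect to be the genuine obstacle is the surjectivity of $\mathcal{C}\mapsto h_\mu(T,X|\mathcal{C})$ onto $[0,H]$, i.e. the exact realization of intermediate relative entropies; every other step is either a boundary triviality or a soft consequence of the monotonicity and countable-stability properties of $h$ and $h^B$ recorded in Proposition~\ref{06.02.28} and the mass-distribution machinery of Section~3. If one prefers to avoid Sinai's theorem, the same surjectivity can be obtained by exhibiting an increasing family of $T$-invariant sub-$\sigma$-algebras interpolating between $\{\emptyset,X\}$ and $\mathcal{B}_\mu$ along which $h_\mu(T,X|\mathcal{C})$ decreases continuously from $H$ to $0$, and then invoking the intermediate value theorem.
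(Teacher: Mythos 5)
Your proposal is correct and takes essentially the same route as the paper: for $0<h<h_{\text{top}}(T,X)$ the paper likewise picks an ergodic $\mu$ with $h<h_\mu(T)<+\infty$ via the variational principle, invokes the known fact (citing \cite[Theorem 15.11]{G}, whose standard proof is exactly your Sinai-factor-plus-addition-formula argument) that some $T$-invariant sub-$\sigma$-algebra $\mathcal{C}\subseteq\mathcal{B}_\mu$ satisfies $h_\mu(T,X|\mathcal{C})=h$, and then applies Lemma \ref{lemma for cover} (2), with $h=0$ handled by a singleton. The only cosmetic difference is at the endpoint $h=h_{\text{top}}(T,X)$, where your limiting construction with $\overline{\bigcup_n K_n}$, though valid, is unnecessary: the paper simply takes $K_h=X$, since $h^B(T,X)=h(T,X)$ by Proposition \ref{06.02.28} (1).
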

\begin{proof} If $h=h_{\text{top}} (T, X)$, it is true for $K_h=X$
by Proposition \ref{06.02.28} (1). If $h=0$, it is true for $K_h=\{
x\}$ for any $x\in X$. Now we assume $0< h< h_{\text{top}} (T, X)$.
By the variational principle there exists $\mu\in \mathcal{M}^e (X,
T)$ with $h<h_\mu(T)\le h_{\text{top}} (T, X)<+ \infty$. It is well
known \cite[Theorem 15.11]{G} that there exists a $T$-invariant
sub-$\sigma$-algebra $\mathcal{C}\subseteq \mathcal{B}_\mu$ such
that $h_\mu (T, X| \mathcal{C})= h$, where $\mathcal{B}_\mu$ is the
completion of $\mathcal{B}_X$ under $\mu$. Then the conclusion
follows from Lemma \ref{lemma for cover} (2).
\end{proof}

%\begin{cor} Let $(X,T)$ be a TDS with zero mean topological
%dimension. Then for each $0\le h\le h_{\text{top}} (T, X)$ there are
%a factor $(Y,S)$ of $(X,T)$ and a compact $K_h\subset Y$ with
%$h(T,K_h)=h$.
%\end{cor}

Let $(X, T)$ be a TDS, $\mu\in \mathcal{M}(X, T)$ and
$\mathcal{C}\subseteq \mathcal{B}_\mu$ a $T$-invariant
sub-$\sigma$-algebra with $\mu= \int_X \mu_x d \mu (x)$ the
disintegration of $\mu$ over $\mathcal{C}$. For $\mu$-a.e. $x\in
X$,  we define
\begin{equation*}
h^B_\mathcal{U} (T, \mu, x)= \inf \{h^B_\mathcal{U} (T, Z): Z\in
\mathcal{B}_X \text{ with } \mu_x (Z)= 1\}
\end{equation*}
for any given $\mathcal{U}\in \mathcal{C}_X$ and
\begin{equation*}
h^B (T, \mu, x)= \sup_{\mathcal{U}\in \mathcal{C}_X^o}
h^B_\mathcal{U} (T, \mu, x).
\end{equation*}
The {\it essential supremum} of a real valued function $f$ defined
on a subset of $X$ with $\mu$-full measure is defined by
$$\mu-\sup f(x)=\inf_{\mu(X')= 1} \sup_{x\in X'}f(x).$$
We are not sure of the $\mu$-measurability of functions both
$h^B_\mathcal{U} (T, \mu, x)$ and $h^B (T, \mu, x)$ w.r.t. $x\in X$.
Whereas, using Proposition \ref{more estimate-not measurable} we
have the following result.

\begin{cor} \label{estimate-not measurable}
Let $(X, T)$ be a TDS, $\mu\in \mathcal{M}(X, T)$ and
$\mathcal{C}\subseteq \mathcal{B}_\mu$ a $T$-invariant
sub-$\sigma$-algebra. If $\mu= \int_X \mu_x d \mu (x)$ is the
disintegration of $\mu$ over $\mathcal{C}$, then
\begin{enumerate}

\item Let $\mathcal{U}\in
\mathcal{C}_X$ and $\alpha\in \mathcal{P}_X$ with
$f_{T,\alpha}^{\mathcal{C}}(x)$ the function obtained in Theorem
\ref{SMB-conditional version} for $T$, $\alpha\in \mathcal{P}_X$
and $\mathcal{C}$. Assume that each element of $\mathcal{U}$ has a
non-empty intersection with at most $M$ elements of $\alpha$
($M\in \mathbb{N}$). Then for $\mu$-a.e. $x\in X$,
$$h^B_\mathcal{U} (T, \mu, x)\ge f_{T,\alpha}^{\mathcal{C}}(x)- \log M$$ and if $\mu$ is
ergodic then $$h^B_\mathcal{U} (T, \mu, x)\ge h_\mu (T, \alpha|
\mathcal{C})- \log M.$$

\item $\mu-\sup h^B (T, \mu, x)\ge h_\mu (T, X|
\mathcal{C})$; moreover, if $\mu$ is ergodic then $h^B (T, \mu,
x)=h_\mu (T, X| \mathcal{C})$ for $\mu$-a.e. $x\in X$.
\end{enumerate}
\end{cor}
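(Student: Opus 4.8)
The plan is to turn the set-level estimates of Proposition \ref{more estimate-not measurable} into statements about the pointwise quantities $h^B_\mathcal{U}(T,\mu,x)$ and $h^B(T,\mu,x)$. Part (1) is essentially immediate: since $\{Z\in\mathcal{B}_X:\mu_x(Z)=1\}\subseteq\{Z\in\mathcal{B}_X:\mu_x(Z)>0\}$, Proposition \ref{more estimate-not measurable}(1) says that for $\mu$-a.e. $x$ \emph{every} admissible $Z$ in the defining infimum of $h^B_\mathcal{U}(T,\mu,x)$ already satisfies $h^B_\mathcal{U}(T,Z)\ge f_{T,\alpha}^{\mathcal{C}}(x)-\log M$, so taking the infimum over such $Z$ gives the first inequality; the ergodic refinement follows in the same way from the ergodic clause of Proposition \ref{more estimate-not measurable}(1) (equivalently, $f_{T,\alpha}^{\mathcal{C}}(x)=h_\mu(T,\alpha|\mathcal{C})$ $\mu$-a.e.).

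For part (2) I would borrow the construction from the proof of Proposition \ref{more estimate-not measurable}(2): fix $\mathcal{U}_k\in\mathcal{C}_X^o$ with $\|\mathcal{U}_k\|\le 1/k$, and for each $n$ choose $\alpha_{n,k}\in\mathcal{P}_X$ with $\alpha_{n,k}\succeq(\mathcal{U}_k)_0^{n-1}$ and $\mathcal{U}_{n,k}\in\mathcal{C}_X^o$ so that each element of $\mathcal{U}_{n,k}$ meets at most $n\#\mathcal{U}_k$ elements of $\alpha_{n,k}$, and set $g_n^k(x)=\frac1n\sum_{i=0}^{n-1}f_{T^n,\alpha_{n,k}}^{\mathcal{C}}(T^ix)$. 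The step needing care is that the infimum over full-measure sets built into the pointwise quantity blocks the naive Birkhoff averaging over the orbit $\{T^ix\}$. I would circumvent this by passing to the refined cover $(\mathcal{U}_{n,k})_0^{n-1}=\bigvee_{i=0}^{n-1}T^{-i}\mathcal{U}_{n,k}$: on a $T$-invariant full-measure set $X_1$ where $T\mu_x=\mu_{Tx}$ and Proposition \ref{more estimate-not measurable}(1) holds at every $T^ix$ for all the data $(T^n,\alpha_{n,k},\mathcal{U}_{n,k})$, for any $Z$ with $\mu_x(Z)>0$ the chain formed by cover-monotonicity, Proposition \ref{06.02.28}(3) (moving $T^{-i}\mathcal{U}_{n,k}$ back to $\mathcal{U}_{n,k}$ at the cost of replacing $Z$ by $T^iZ$), and Proposition \ref{more estimate-not measurable}(1) applied at $T^ix$ (legitimate since $\mu_{T^ix}(T^iZ)=\mu_x(Z)>0$) yields $h^B_{(\mathcal{U}_{n,k})_0^{n-1}}(T^n,Z)\ge g_n^k(x)-\log(n\#\mathcal{U}_k)$ after averaging over $i$. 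Now Proposition \ref{06.02.28}(4) converts the left side exactly into $n\,h^B_{\mathcal{U}_{n,k}}(T,Z)$, so dividing by $n$ and taking the infimum over $Z$ with $\mu_x(Z)=1$ gives, for $x\in X_1$, the single estimate $h^B(T,\mu,x)\ge h^B_{\mathcal{U}_{n,k}}(T,\mu,x)\ge \frac1n g_n^k(x)-\frac{\log(n\#\mathcal{U}_k)}{n}$.

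Both assertions of (2) then fall out of this estimate. In the general case I take $\mu\text{-}\sup$ and use $\mu\text{-}\sup(\frac1n g_n^k)\ge\int\frac1n g_n^k\,d\mu=\frac1n h_\mu(T^n,\alpha_{n,k}|\mathcal{C})\ge h_\mu(T,\mathcal{U}_k|\mathcal{C})$ (the last step from $\alpha_{n,k}\succeq(\mathcal{U}_k)_0^{n-1}$ and the power identity for $h_\mu(T,\cdot|\mathcal{C})$); letting $n\to\infty$ and then $k\to\infty$ gives $\mu\text{-}\sup h^B(T,\mu,x)\ge h_\mu(T,X|\mathcal{C})$. When $\mu$ is ergodic the $T$-invariant function $g_n^k$ equals the constant $h_\mu(T^n,\alpha_{n,k}|\mathcal{C})$ $\mu$-a.e., so the same estimate becomes the pointwise bound $h^B(T,\mu,x)\ge h_\mu(T,X|\mathcal{C})$ for $\mu$-a.e.\ $x$. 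For the matching upper bound in the ergodic case I invoke Lemma \ref{lemma for cover}(2): setting $Z_x=\bigcup_{j}Z_x(1/j)$ produces a set of full $\mu_x$-measure with $h^B(T,Z_x)=\sup_j h^B(T,Z_x(1/j))=h_\mu(T,X|\mathcal{C})$ by Proposition \ref{06.02.28}(2), whence $h^B_\mathcal{U}(T,\mu,x)\le h^B_\mathcal{U}(T,Z_x)\le h^B(T,Z_x)=h_\mu(T,X|\mathcal{C})$ for every $\mathcal{U}$, and taking the supremum over $\mathcal{U}$ completes the equality.

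The main obstacle, as indicated, is reconciling the infimum over full-measure sets in the definition of the pointwise entropy with the orbit averaging needed to kill the $\log(n\#\mathcal{U}_k)$ term; running the argument through the single refined cover $(\mathcal{U}_{n,k})_0^{n-1}$ and invoking Proposition \ref{06.02.28}(4) is what makes the averaging compatible with a fixed cover. A secondary subtlety is that $h^B(T,\mu,x)$ is not known to be $\mu$-measurable, which is exactly why the general case must be phrased through $\mu\text{-}\sup$ (using only $\mu\text{-}\sup g\ge\int g\,d\mu$) and why ergodicity, by forcing $g_n^k$ to be constant, is precisely what upgrades the essential-supremum bound to an almost-everywhere one.
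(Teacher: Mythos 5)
Your proof is correct, and its skeleton is the paper's: the same auxiliary objects ($\mathcal{U}_k$ with $||\mathcal{U}_k||\le\frac1k$, partitions $\alpha_{n,k}\succeq(\mathcal{U}_k)_0^{n-1}$, covers $\mathcal{U}_{n,k}$, the conditional SMB functions for $(X,T^n)$), the same reductions via Proposition \ref{06.02.28} (3),(4) and the power identity for $h_\mu(T,\cdot|\mathcal{C})$, the inequality $\mu\text{-}\sup g\ge\int g\,d\mu$ for the general case, and the same upper bound in the ergodic case from Lemma \ref{lemma for cover} (2) via a countable union and Proposition \ref{06.02.28} (2). There are two local deviations worth recording. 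First, in the general case the paper does not average over the orbit at all: it bounds $n\,h^B_{\mathcal{U}_{n,k}}(T,\mu,x)\ge h^B_{\mathcal{U}_{n,k}}(T^n,\mu,x)\ge f_{T^n,\alpha_{n,k}}^{\mathcal{C}}(x)-\log(n\#\mathcal{U}_k)$ directly from part (1) applied to $T^n$, and only then integrates; your averaged $g_n^k$ reaches the same integral $h_\mu(T^n,\alpha_{n,k}|\mathcal{C})$ at the cost of the extra invariant-set and $T\mu_x=\mu_{Tx}$ bookkeeping, so for that half your argument is slightly heavier than necessary. Second, and in your favor: for the ergodic lower bound the paper simply cites Proposition \ref{more estimate-not measurable} (2), but read literally that proposition controls $\inf_Z\sup_{\mathcal{U}}h^B_{\mathcal{U}}(T,Z)$ over sets of positive $\mu_x$-measure, whereas $h^B(T,\mu,x)$ is $\sup_{\mathcal{U}}\inf_Z h^B_{\mathcal{U}}(T,Z)$, and $\sup\inf\le\inf\sup$; moreover the unaveraged estimate alone cannot close this, since $f_{T^n,\alpha_{n,k}}^{\mathcal{C}}$ is only $T^n$-invariant and need not be $\mu$-a.e.\ constant when $\mu$ is merely $T$-ergodic. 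Your fixed-cover chain through $(\mathcal{U}_{n,k})_0^{n-1}$, Proposition \ref{06.02.28} (3) with $T^{-i}\mathcal{U}_{n,k}$, and ergodicity of the $T$-invariant $g_n^k$ yields $h^B_{\mathcal{U}_{n,k}}(T,\mu,x)\ge h_\mu(T,\mathcal{U}_k|\mathcal{C})-\frac1n\log(n\#\mathcal{U}_k)$ a.e., which supplies exactly the uniformity in the cover needed to bridge the sup--inf gap; at this one point your write-up is more watertight than the paper's citation.
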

\begin{proof}
(1) is just a direct corollary of Proposition \ref{more estimate-not
measurable} (1).

(2) For $k\in \mathbb{N}$ we take $\mathcal{U}_k\in \mathcal{C}_X^o$
with $||\mathcal{U}_k||\le \frac{1}{k}$. Then, for each $n\in
\mathbb{N}$, we take $\alpha_{n, k}\in \mathcal{P}_X$ such that
$\alpha_{n,k}\succeq (\mathcal{U}_k)_{0}^{n-1}$ and at most $n\#
\mathcal{U}_k$ elements of $\alpha_{n,k}$ can have a point in all
their closures, and take $\mathcal{U}_{n, k}\in \mathcal{C}_X^o$
such that each element of $\mathcal{U}_{n, k}$ has a non-empty
intersection with at most $n \# \mathcal{U}_k$ elements of
$\alpha_{n,k}$.

Let $f_{T^n,\alpha_{n,k}}^{\mathcal{C}} (x)$ be the function
obtained in Theorem \ref{SMB-conditional version} for $T^n$,
$\alpha_{n,k}$ and $\mathcal{C}$. Then
$f_{T^n,\alpha_{n,k}}^{\mathcal{C}} (x)$ is $T^n$-invariant and
$\int_X f_{T^n,\alpha_{n,k}}^{\mathcal{C}} (x) d\mu (x)= h_\mu (T^n,
\alpha_{n,k}| \mathcal{C})$. Then using (1), for $\mu$-a.e. $x\in
X$,
\begin{eqnarray*}
&& n h^B_{\mathcal{U}_{n, k}} (T, \mu, x) =  \inf \{n
h^B_{\mathcal{U}_{n, k}} (T, Z):  Z\in \mathcal{B}_X \text{ with }
\mu_x (Z)= 1\}
\\
&\ge & \inf \{h^B_{\mathcal{U}_{n, k}} (T^n, Z): Z\in
\mathcal{B}_X \text{ with } \mu_x (Z)= 1\}
\ (\text{by Proposition \ref{06.02.28} (4)}) \\
&\ge & f_{T^n,\alpha_{n,k}}^{\mathcal{C}} (x)- \log (n \#
\mathcal{U}_k)\ (\text{using (1)}).
\end{eqnarray*}
Hence,
\begin{eqnarray*}
& & \mu-\sup h^B (T, \mu, x)\ge \mu-\sup h^B_{\mathcal{U}_{n, k}}
(T, \mu, x) \\ & \ge & \frac{1}{n} \int_X \left(
f_{T^n,\alpha_{n,k}}^{\mathcal{C}} (x)- \log (n \# \mathcal{U}_k)
\right) d \mu(x) =\frac{1}{n}\left(  h_\mu (T^n, \alpha_{n,k}|
\mathcal{C})-\log (n \# \mathcal{U}_k) \right)\\
&\ge & \frac{1}{n}(h_\mu (T^n, (\mathcal{U}_k)_{0}^{n-1}|
\mathcal{C})-\log (n \# \mathcal{U}_k))=h_\mu (T, \mathcal{U}_k|
\mathcal{C})-\frac{1}{n}\log (n \# \mathcal{U}_k).
\end{eqnarray*}
Fixing $k\in \mathbb{N}$ letting $n\rightarrow +\infty$ in the above
inequality we obtain $\mu-\sup h^B (T, \mu, x)\ge h_\mu (T,
\mathcal{U}_k| \mathcal{C})$. Then letting $k\rightarrow +\infty$ we
obtain $\mu-\sup h^B (T, \mu, x)\ge h_\mu (T, X| \mathcal{C})$.

Now we assume that $\mu$ is ergodic. First, by Proposition \ref{more
estimate-not measurable} (2), we know $h^B (T, \mu, x)$ $\ge h_\mu
(T, X| \mathcal{C})$ for $\mu$-a.e. $x\in X$. Secondly using Lemma
\ref{lemma for cover} (2),  there exists a measurable subset $X'$ of
$X$ with $\mu(X')=1$ such that if $x\in X'$ then
 for each $\ell\in \mathbb{N}$ there exists a
compact subset $Z_x (\ell)$ of $X$ such that $\mu_x (Z_x
(\epsilon))\ge 1- \frac{1}{2^{\ell}}$ and $h^B (T, Z_x
(\ell))=h_\mu (T, X| \mathcal{C})$. Next for each $x\in X'$, let
$Z_x=\bigcup_{\ell\in \mathbb{N}}Z_x(\ell)$. Then $Z_x\in
\mathcal{B}_X$ with $\mu(Z_x)=1$ and $h^B(T,Z_x)=\sup_{\ell\in
\mathbb{N}}h^B(T,Z_x(\ell))=h_\mu (T, X| \mathcal{C})$. This
implies $h^B (T, \mu, x)\le h_\mu (T, X| \mathcal{C})$. Collecting
terms, $h^B (T, \mu, x)=h_\mu (T, X| \mathcal{C})$ for $\mu$-a.e.
$x\in X$.
\end{proof}

Following from the proof of Corollary \ref{estimate-not measurable},
we are easy to show the following result.

\begin{cor} \label{lemma for Bowen}
Let $(X, T)$ be a TDS, $\mu\in \mathcal{M}^e(X, T)$ and
$\mathcal{C}\subseteq \mathcal{B}_\mu$ a $T$-invariant
sub-$\sigma$-algebra. If $\mu= \int_X \mu_x d \mu (x)$ is the
disintegration of $\mu$ over $\mathcal{C}$, then
\begin{enumerate}

\item If $\alpha\in \mathcal{P}_X$ then for $\mu$-a.e. $x\in
X$ there exists $Z_x\in \mathcal{B}_X$ such that $\mu_x (Z_x)= 1$
and $h^B_\alpha (T, Z_x)$ $= h_\mu (T, \alpha| \mathcal{C})$.
Moreover, $h_\alpha^B (T, \mu, x)= h_\mu (T, \alpha| \mathcal{C})$
for $\mu$-a.e. $x\in X$.

\item For $\mu$-a.e. $x\in
X$ there exists $Z_x\in \mathcal{B}_X$ such that $\mu_x (Z_x)= 1$
and $h^B (T, Z_x)= h_\mu (T, X| \mathcal{C})$.
\end{enumerate}
\end{cor}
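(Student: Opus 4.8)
The plan is to bootstrap from the two parts of Lemma \ref{lemma for cover}, which already produce compact subsets of large conditional measure carrying the full entropy, and then upgrade ``large measure'' to ``full measure'' by taking a countable union and invoking the countable stability of the dimensional entropy recorded in Proposition \ref{06.02.28} (2).

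For the first assertion of (1), I would apply Lemma \ref{lemma for cover} (1) with $\epsilon=2^{-\ell}$, $\ell\in\N$: for $\mu$-a.e.\ $x$ this yields compact sets $Z_x(\alpha,2^{-\ell})$ with $\mu_x(Z_x(\alpha,2^{-\ell}))\ge 1-2^{-\ell}$ and $h^B_\alpha(T,Z_x(\alpha,2^{-\ell}))=h_\mu(T,\alpha|\mathcal{C})$. Setting $Z_x=\bigcup_{\ell\in\N}Z_x(\alpha,2^{-\ell})$, one obtains a Borel set with $\mu_x(Z_x)\ge\sup_\ell(1-2^{-\ell})=1$, and Proposition \ref{06.02.28} (2) gives $h^B_\alpha(T,Z_x)=\sup_\ell h^B_\alpha(T,Z_x(\alpha,2^{-\ell}))=h_\mu(T,\alpha|\mathcal{C})$, which is exactly the $Z_x$ wanted.

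For the ``moreover'' clause of (1), recall that $h^B_\alpha(T,\mu,x)=\inf\{h^B_\alpha(T,Z):\mu_x(Z)=1\}$. The set $Z_x$ just built has $\mu_x(Z_x)=1$, so it supplies the upper bound $h^B_\alpha(T,\mu,x)\le h^B_\alpha(T,Z_x)=h_\mu(T,\alpha|\mathcal{C})$. For the matching lower bound I would apply Proposition \ref{more estimate-not measurable} (1) with the cover $\mathcal{U}=\alpha$ itself; since the elements of $\alpha$ are pairwise disjoint, each element of $\alpha$ meets exactly one element of $\alpha$, so $M=1$ and $\log M=0$, whence $h^B_\alpha(T,Z)\ge h_\mu(T,\alpha|\mathcal{C})$ for every $Z$ with $\mu_x(Z)>0$, and taking the infimum over $Z$ of full $\mu_x$-measure yields $h^B_\alpha(T,\mu,x)\ge h_\mu(T,\alpha|\mathcal{C})$. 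Part (2) is handled identically, replacing Lemma \ref{lemma for cover} (1) by Lemma \ref{lemma for cover} (2): for $\mu$-a.e.\ $x$ one forms $Z_x=\bigcup_{\ell\in\N}Z_x(2^{-\ell})$ with $\mu_x(Z_x)=1$ and, by Proposition \ref{06.02.28} (2), $h^B(T,Z_x)=\sup_\ell h^B(T,Z_x(2^{-\ell}))=h_\mu(T,X|\mathcal{C})$; indeed, this precise construction already appears inside the proof of Corollary \ref{estimate-not measurable} (2).

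Since every ingredient is supplied by the preceding results, there is no serious obstacle, and the only points demanding care are bookkeeping ones. First, one must keep the two relevant full-measure sets compatible --- the one on which Lemma \ref{lemma for cover} holds and the one on which Proposition \ref{more estimate-not measurable} (1) holds --- by intersecting them, so that both the union construction and the lower bound are available at the same $\mu$-a.e.\ $x$. Second, the passage from ``conditional measure at least $1-2^{-\ell}$ for each $\ell$'' to ``conditional measure $1$ for the union'' uses only monotonicity of $\mu_x$, while the corresponding entropy equality is exactly the countable-union formula of Proposition \ref{06.02.28} (2); it is worth stressing that this stability is a feature of the dimensional entropy $h^B$ and is precisely what makes the union argument go through cleanly.
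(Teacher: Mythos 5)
Your proposal is correct and matches the paper's own route: the paper gives no separate argument but simply defers to the proof of Corollary \ref{estimate-not measurable} (2), whose final paragraph contains exactly your construction --- take $Z_x=\bigcup_{\ell\in\N}Z_x(\cdot\,,2^{-\ell})$ from Lemma \ref{lemma for cover}, use the countable stability $h^B(T,\bigcup_\ell K_\ell)=\sup_\ell h^B(T,K_\ell)$ of Proposition \ref{06.02.28} (2) to keep the entropy equal to $h_\mu(T,\alpha|\mathcal{C})$ (resp.\ $h_\mu(T,X|\mathcal{C})$), and obtain the lower bound for $h^B_\alpha(T,\mu,x)$ from Proposition \ref{more estimate-not measurable} (1) with $\mathcal{U}=\alpha$ and $M=1$. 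Your bookkeeping remarks (intersecting the two full-measure sets, monotonicity of $\mu_x$) are also exactly the points implicit in the paper's one-line proof.
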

\begin{rem} We can't expect similar results hold for topological
entropy of subsets using open covers. For example, let $(X, T)$ be a
minimal TDS, $\mu\in \mathcal{M}^e (X, T)$ and $\mathcal{C}=
\{\emptyset, X\}$ such that $0<h_\mu(T, X)<h_{\text{top}}(T,X)$. Let
$\mu=\int_X \mu_x d\mu(x)$ be the disintegration of $\mu$ over
$\mathcal{C}$, then $\mu_x=\mu$ for $\mu$-a.e. $x\in X$. Thus for
$\mu$-a.e. $x\in X$, if $Z\in \mathcal{B}_X$ with $\mu_x (Z)= 1$
then $\overline{Z}= X$, which implies $h(T, Z)=h (T, \overline{Z})=
h_{\text{top}} (T, X)>h_\mu(T, X)= h_\mu(T, X|\mathcal{C})$.
\end{rem}

\section{Expansive cases}

In this section by direct construction we shall prove that each
expansive TDS is \HUL. Recall that we say a TDS $(X, T)$ is {\it
expansive} if there exists $\delta> 0$ such that $x\neq y$ implies
$\sup_{n\in \mathbb{Z}} d (T^n x, T^n y)> \delta$. In this case,
$\delta$ is called an {\it expansive constant}. In particular, each
symbolic TDS is expansive.

\medskip

To do this let's first recall \cite[Remark 5.13]{YZ}. Let $(X, T)$
be a TDS with metric $d$ and $E$ a compact subset. For each
$\epsilon> 0$ and $x\in E$ we define $$h_d (x, \epsilon, E)= \inf
\{r(d, T, \epsilon, K): K\ \text{is a compact neighborhood of}\ x\
\text{in}\ E\}.$$ Let $h (x, E)= \lim_{\epsilon\rightarrow 0 +}
h_d (x, \epsilon, E)$. Its value depends only on the topology on
$X$. The following is \cite[Remark 5.13]{YZ}.

\begin{thm} \label{ye-zhang}
Let $(X,T)$ be a TDS with metric $d$ and $E$ a compact subset. Then
\begin{enumerate}

\item $h_d (x, \epsilon, E)$ is u.s.c. on $E$ and $\sup_{x\in
E}h(x,E)=h(T,E)$.

\item For each $x\in E$ there is a countable compact subset
$E_x\subseteq E$ with a unique limit point $x$ such that $h (T,
E_x)= h (x, E)$.

\item  There is a countable compact subset $E'\subseteq E$ with
$h(T,E')=h(T,E)$. Moreover, $E'$ can be chosen such that the set
of its limit points has at most one limit point, and $E'$ has a
unique limit point iff there is $x\in E$ with $h(x,E)=h(T,E)$.
\end{enumerate}
\end{thm}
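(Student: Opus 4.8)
For part (1), upper semi-continuity of $h_d(\,\cdot\,,\epsilon,E)$ follows from a direct neighborhood argument. Given $\eta>0$, pick a compact neighborhood $K$ of $x_0$ in $E$ with $r(d,T,\epsilon,K)\le h_d(x_0,\epsilon,E)+\eta$. Since $x_0$ lies in the relative interior of $K$, every $x$ near $x_0$ also lies there, so $K$ is a compact neighborhood of $x$ as well and $h_d(x,\epsilon,E)\le r(d,T,\epsilon,K)$; letting $x\to x_0$ and then $\eta\to0$ gives $\limsup_{x\to x_0}h_d(x,\epsilon,E)\le h_d(x_0,\epsilon,E)$. For the supremum formula, $h_d(x,\epsilon,E)\le r(d,T,\epsilon,E)$ is immediate (take $K=E$), so $\sup_x h(x,E)\le h(T,E)$. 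For the reverse, fix $\epsilon$, choose for each $x$ a compact neighborhood $K_x$ with $r(d,T,\epsilon,K_x)\le\sup_y h_d(y,\epsilon,E)+\eta$, extract a finite subcover $K_{x_1},\dots,K_{x_m}$ of $E$ from the relative interiors, and use the elementary identity $r(d,T,\epsilon,\bigcup_i K_{x_i})=\max_i r(d,T,\epsilon,K_{x_i})$ to get $r(d,T,\epsilon,E)\le\sup_y h_d(y,\epsilon,E)$. As both $h_d(x,\epsilon,E)$ and $r(d,T,\epsilon,E)$ increase when $\epsilon\downarrow0$, the two suprema may be exchanged, giving $\sup_x h(x,E)=\sup_\epsilon r(d,T,\epsilon,E)=h(T,E)$.

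For part (2), the upper bound is automatic for any $E_x=\{x\}\cup\bigcup_n F_n$ with $F_n$ finite and $F_n\subseteq \bar B(x,r_n)\cap E$, $r_n\to0$: for each $N$ the tail $\{x\}\cup\bigcup_{n\ge N}F_n$ lies in the compact neighborhood $\bar B(x,r_N)\cap E$, and these neighborhoods are cofinal, so $r(d,T,\epsilon,E_x)\le\inf_N r(d,T,\epsilon,\bar B(x,r_N)\cap E)=h_d(x,\epsilon,E)$, whence $h(T,E_x)\le h(x,E)$. The real content is the matching lower bound together with the unique-limit-point property. Here I use the local-to-global inequality $s(d,T,\epsilon,K)\ge r(d,T,\epsilon,K)\ge h_d(x,\epsilon,E)$, valid for every compact neighborhood $K$ of $x$. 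Fix $\epsilon_k\downarrow0$ and set $a_k=h_d(x,\epsilon_k,E)\uparrow h(x,E)$. For each $k$ and each $i$, inside an arbitrarily small ball $\bar B(x,r_{k,i})\cap E$ I can find an $(m_{k,i},\epsilon_k)$-separated set $F_{k,i}$ with $m_{k,i}\to\infty$ and $\frac1{m_{k,i}}\log|F_{k,i}|\ge a_k-\frac1i$. Enumerating the pairs $(k,i)$ so that the radii tend to $0$ makes $E_x=\{x\}\cup\bigcup_{k,i}F_{k,i}$ a countable compact set whose only limit point is $x$. For each fixed $k$ the $F_{k,i}$ are $(m_{k,i},\epsilon_k)$-separated subsets of $E_x$ with $m_{k,i}\to\infty$, so $s(d,T,\epsilon_k,E_x)\ge a_k$; letting $k\to\infty$ yields $h(T,E_x)\ge h(x,E)$.

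I expect the main obstacle to be the scale bookkeeping in this last step. A coarse-scale separated set is automatically fine-scale separated but not conversely, so one cannot simply let the separation scale tend to $0$ along $n$ and still read off a positive $s(d,T,\epsilon,\cdot)$ at a fixed $\epsilon$. Inserting, for every fixed scale $\epsilon_k$, infinitely many separated sets with growing $m$ and rate approaching $a_k$ while simultaneously forcing the enclosing radii to $0$ is exactly what reconciles the demand that $E_x$ accumulate only at $x$ with the demand that it realize the full local entropy $h(x,E)$.

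For part (3), I split on whether the supremum in (1) is attained. If $h(x^*,E)=h(T,E)$ for some $x^*$, take $E'=E_{x^*}$ from (2): it is countable compact with the single limit point $x^*$ and $h(T,E')=h(T,E)$. Otherwise choose $x_j$ with $h(x_j,E)\uparrow h(T,E)$ and a convergent subsequence $x_j\to x_\infty$; realizing each $E_{x_j}$ inside a ball $\bar B(x_j,\rho_j)$ with $\rho_j\to0$ small enough that $E_{x_j}\to x_\infty$, set $E'=\{x_\infty\}\cup\bigcup_j E_{x_j}$. Monotonicity gives $h(T,E')\le h(T,E)$, while $E'\supseteq E_{x_j}$ gives $h(T,E')\ge h(x_j,E)\to h(T,E)$, so $h(T,E')=h(T,E)$ with no appeal to countable stability of $h(T,\cdot)$; a short accumulation argument shows the limit points of $E'$ are exactly $\{x_j\}\cup\{x_\infty\}$, whose only limit point is $x_\infty$. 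For the equivalence, if some $E'$ with unique limit point $p$ satisfies $h(T,E')=h(T,E)$, then applying (1) to $E'$ (each isolated point contributing $0$) forces $h(p,E')=h(T,E)$; comparing compact neighborhoods of $p$ in $E'\subseteq E$ gives $h_d(p,\epsilon,E)\ge h_d(p,\epsilon,E')$, hence $h(p,E)\ge h(p,E')=h(T,E)$, and since $h(p,E)\le h(T,E)$ always, the supremum is attained at $p$. The converse direction is the first case.
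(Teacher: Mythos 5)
This theorem is imported by the paper from \cite[Remark 5.13]{YZ} and stated without proof, so there is no internal argument to compare against; your proposal has to be judged on its own, and it checks out. The scheme is the expected one and closely parallels the machinery the paper itself builds nearby: your diagonal construction in (2) --- at each scale $\epsilon_k$ inserting, inside balls of radius shrinking along the enumeration, $(m_{k,i},\epsilon_k)$-separated sets with $m_{k,i}\to\infty$ and rate $\ge a_k-\tfrac1i$ --- is the same separated-set bookkeeping as the paper's Lemma \ref{good}, your cofinality observation that the balls $\bar B(x,r_N)\cap E$ realize the infimum defining $h_d(x,\epsilon,E)$ correctly yields the matching upper bound, and your case split in (3) together with the ``isolated points carry zero entropy, so a unique limit point must carry everything'' argument for the forward direction of the iff is sound (note your lower bound in case B indeed needs only monotonicity, not countable stability, which fails for the limsup-type entropy here). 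Two trivial caveats you should patch but which are not gaps: when $h(x,E)=+\infty$ the targets $a_k-\tfrac1i$ should be replaced by, say, $\min\{a_k,i\}$; and the degenerate situations ($x$ isolated in $E$, or $h(T,E)=0$ in the iff of (3)) need a one-line disclaimer, though these afflict the literal statement of the theorem itself rather than your argument.
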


The first result is the following lemma.

\begin{lem} \label{good}
Let $(X, T)$ be a TDS with metric $d$ and $K\subseteq X$ a compact
subset with $h(T,K)>0$. Then for any $0< h< h (T, K)$ there is a
$\delta_0> 0$ such that if $0< \delta\le \delta_0$ then there is a
countable compact subset $K_{h, \delta}\subseteq K$ with a unique
limit point such that $s (d, T, \delta, K_{h, \delta})= h$.
\end{lem}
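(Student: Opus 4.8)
The plan is to work at the fixed scale $\delta$ throughout and to realize the value $h$ as the exponential growth rate of separated sets of a carefully built countable set clustering at a single point. First I would pin down $\delta_0$: since $h^*(d,T,K)=\lim_{\epsilon\to 0+} s(d,T,\epsilon,K)=h(T,K)>h$ and $s(d,T,\epsilon,K)$ increases as $\epsilon$ decreases, there is $\delta_0>0$ with $s(d,T,\delta_0,K)>h$, and then $s(d,T,\delta,K)\ge s(d,T,\delta_0,K)>h$ for every $0<\delta\le\delta_0$. Fix such a $\delta$ and set $c=s(d,T,\delta,K)>h$.

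Next I would locate the prospective limit point $x$. The elementary fact I would use is that, at a fixed scale, $s(d,T,\delta,\cdot)$ is finitely stable: if $K=\bigcup_{i=1}^N C_i$ then any $(n,\delta)$-separated subset of $K$ splits into $(n,\delta)$-separated subsets of the $C_i$, so $s_n(d,T,\delta,K)\le\sum_i s_n(d,T,\delta,C_i)$, whence $s(d,T,\delta,K)=\max_i s(d,T,\delta,C_i)$. If no point $x$ satisfied $s(d,T,\delta,\bar B(x,\rho)\cap K)>h$ for all $\rho>0$, then each $x$ would admit $\rho_x$ with $s(d,T,\delta,\bar B(x,\rho_x)\cap K)\le h$; a finite subcover of the compact $K$ by the balls $B(x,\rho_x)$, together with finite stability, would give $c=s(d,T,\delta,K)\le h$, a contradiction. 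So there is $x\in K$ with $s(d,T,\delta,\bar B(x,\rho)\cap K)>h$ for every $\rho>0$, and this is where the separated points near $x$ will come from.

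The heart of the proof is then a multi-scale construction inside the shrinking neighborhoods $K_k:=\bar B(x,1/k)\cap K$. From $s(d,T,\delta,K_k)>h$ I would extract, along an increasing sequence of times $m_k\to\infty$, finite $(m_k,\delta)$-separated subsets $E_k\subseteq K_k$ with $|E_k|\approx e^{hm_k}$, and set $K_{h,\delta}:=\{x\}\cup\bigcup_k E_k$. Since $E_k\subseteq\bar B(x,1/k)$ and the $E_k$ are finite with $m_k$ increasing, $K_{h,\delta}$ is compact with unique limit point $x$, and the lower bound is immediate: $s_{m_k}(d,T,\delta,K_{h,\delta})\ge|E_k|\ge e^{hm_k}$ forces $s(d,T,\delta,K_{h,\delta})\ge h$. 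The delicate requirement is the matching upper bound $s_n(d,T,\delta,K_{h,\delta})\le(\text{subexponential})\cdot e^{hn}$ for \emph{all} large $n$. This cannot be obtained by bounding each block separately and summing, because the blocks $E_k$ with $m_k\gg n$ consist of infinitely many points clustering at $x$ that can nevertheless be pairwise $(n,\delta)$-separated, and a crude estimate only yields the useless $s_n(d,T,\delta,K)\approx e^{cn}$. To defeat this I would not build the $E_k$ independently, but realize them as nodes of a single rooted tree of Bowen $(\cdot,\delta)$-balls that branches at rate exactly $h$: refining along a geometric sequence of checkpoint times, each time-$t$ ball splits into at most $e^{h\Delta}$ balls at time $t+\Delta$, the existence of such rate-$h$ branching (rather than the ambient rate $c$) being secured by pigeonholing the large separated sets of $K_k$ across $d_t$-balls. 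Choosing all the $E_k$ and the point $x$ as representatives of this one tree forces, at every time $n$, the whole of $K_{h,\delta}$ to lie in at most $(\text{subexponential})\cdot e^{hn}$ Bowen $(n,\delta)$-balls, whence the uniform upper bound follows.

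The main obstacle is exactly this last step: producing a single coherent multi-scale structure whose branching rate is uniformly $h$ at every scale, so that the lower bound (enough separation at the scales $m_k$) and the global upper bound (no more than rate-$h$ separation at every intermediate scale) hold simultaneously, while the underlying point set remains countable and accumulates only at $x$. Reconciling ``positive rate $h$ requires exponentially many separated points'' with ``countable set having a single limit point'' is the crux, and is where the techniques of \cite{YZ} behind Theorem \ref{ye-zhang} enter. Combining the two bounds yields $s(d,T,\delta,K_{h,\delta})=h$ for every $0<\delta\le\delta_0$, as required.
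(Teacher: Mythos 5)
Your setup is sound: the choice of $\delta_0$ from $\lim_{\epsilon\rightarrow 0+}s(d,T,\epsilon,K)=h(T,K)>h$, and the compactness/finite-stability argument producing a point $x$ with $s(d,T,\delta,\overline{B}(x,\rho)\cap K)>h$ for every $\rho>0$, are both correct. But the heart of the lemma --- the upper bound $s(d,T,\delta,K_{h,\delta})\le h$ --- is not proved: the ``single rooted tree of Bowen $(\cdot,\delta)$-balls that branches at rate exactly $h$'' is postulated rather than constructed, and you concede as much when you call its existence ``the main obstacle'' and defer to the techniques behind Theorem \ref{ye-zhang}. Pigeonholing large $(m,\delta)$-separated subsets of $K_k$ across $d_t$-balls yields lower bounds (some ball contains many points); it does not deliver the three things you need simultaneously: branching capped at rate $h$ at \emph{every} node and checkpoint, enough surviving descendants at the times $m_k$ to supply $|E_k|\approx e^{hm_k}$, and accumulation of all selected points only at $x$. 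Reconciling these is exactly the content of the lemma, not a routine step. There is also a scale mismatch in the intended conclusion: covering $K_{h,\delta}$ by $N$ Bowen $(n,\delta)$-balls only bounds $(n,2\delta)$-separated subsets by $N$; to control $s_n(d,T,\delta,\cdot)$ you would need balls of radius $\delta/2$, about which the hypothesis $s(d,T,\delta,K)>h$ says nothing.

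The paper closes this gap with two devices absent from your sketch. First, before fixing $\delta_0$ it replaces $K$ by a countable compact $K_0\subseteq K$ with a unique limit point $x_0$ and $h(T,K_0)>h$ (Theorem \ref{ye-zhang}); countability plus the unique limit point guarantee that each Bowen neighborhood $B_{d_l}(x_0,\delta)$ contains all but finitely many points of $K_0$, so the pool for each stage keeps separation rate $>h$ after deleting the points already used, and any infinite subset of $K_0$ automatically accumulates only at $x_0$ --- in your metric-ball scheme neither is automatic, since points of $B_{d_l}(x,\delta)\cap K$ need not be $d$-close to $x$. Second, and decisively, the times are chosen \emph{greedily}: $l_{i+1}$ is the minimal integer $n>l_i$ such that $(B_{d_{l_i}}(x_0,\delta)\cap K_0)\setminus A_i$ contains an $(n,\delta)$-separated set of cardinality $[e^{nh}]-[e^{l_ih}]+2$, and $A_{i+1}$ adjoins such a set of cardinality one less, so that $|A_{i+1}|=[e^{l_{i+1}h}]+i+1$. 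Minimality then caps any $(l,\delta)$-separated $A\subseteq A_\infty$ with $l_n\le l<l_{n+1}$ by $|A|\le([e^{lh}]-[e^{l_nh}]+1)+([e^{l_nh}]+n)=[e^{lh}]+n+1$, which is the uniform upper bound at all intermediate times, obtained entirely with separated sets at the single scale $\delta$ and with no covering or tree structure at all. Without this minimality mechanism, or an equivalent selection principle, your argument does not close.
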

\begin{proof}
Let $0< h< h(T, K)$. By Theorem \ref{ye-zhang} there exists a
countable compact subset $K_0\subseteq K$ with a unique limit point
$x_0$ such that $h (T, K_0)> h$, thus for some $\delta_0> 0$ if $0<
\delta\le \delta_0$ then $s (d, T, \delta, K_0)> h$. Now let $0<
\delta\le \delta_0$ be fixed.

Define $l_1$ to be the minimal integer $n\in \mathbb{N}$ such that
\begin{equation*}
\exists B_1\subseteq K_0 \ \text{is} \ (n, \delta)\text{-separated}
\ \text{w.r.t.} \ T \ \text{s.t.} \ |B_1|= [e^{n h}] + 2,
\end{equation*}
here $|B_1|$ means the cardinality of $B_1$. It is clear that $l_1$
is finite, as $s (d, T, \delta, K_0)> h$. Let $A_1= D_1\subseteq
K_0$ be $(l_1, \delta)$-separated w.r.t. $T$ with $|A_1|= [e^{l_1
h}] + 1$ and $x_0\notin A_1$.

Define $l_2$ to be the minimal integer $n> l_1$ such that
\begin{equation*}
\exists B_2\subseteq (B_{d_{l_1}} (x_0, \delta)\cap K_0)\setminus
A_1 \ \text{is} \ (n, \delta)\text{-separated} \ \text{w.r.t.} \ T \
\text{s.t.} \ |B_2|= [e^{n h}]- [e^{l_1 h}] + 2,
\end{equation*}
where $B_{d_{l_1}} (x_0, \delta)$ denotes the open ball with center
$x_0$ and radius $\delta$ (in the sense of $d_{l_1}$-metric). Since
$x_0$ is the unique limit point of the countable compact subset
$K_0\subseteq K$, $K_0\setminus ((B_{d_{l_1}} (x_0, \delta)\cap
K_0)\setminus A_1)$ is a finite subset, so
\begin{equation*}
s (d, T, \delta, (B_{d_{l_1}} (x_0, \delta)\cap K_0)\setminus A_1)=
s (d, T, \delta, K_0)> h,
\end{equation*}
which implies that $l_2> l_1$ is finite. Let $D_2\subseteq
(B_{d_{l_1}} (x_0, \delta)\cap K_0)\setminus A_1$ be $(l_2,
\delta)$-separated w.r.t. $T$ with $|D_2|= [e^{l_2 h}]- [e^{l_1 h}]
+ 1$ and $x_0\notin D_2$. Set $A_2= A_1\cup D_2\not \ni x_0$. Then
$|A_2|= [e^{l_2 h}]+ 2$ and $A_2\subseteq K_0$ is $(l_2,
\delta)$-separated w.r.t. $T$.

By induction there are $l_1< l_2< \cdots$ and $A_1\subseteq
A_2\subseteq \cdots\subseteq K_0$ such that for each $i\in \N$

(1) $x_0\not \in A_i$ and $A_{i+ 1}\setminus A_i\subseteq
B_{d_{l_i}} (x_0, \delta)\cap K_0$.

(2) $A_i$ is $(l_i, \delta)$-separated w.r.t. $T$ and $|A_i|=
[e^{l_i h}]+ i$.

Set $A_\infty = \{x_0\}\cup \bigcup_{i\ge 1} A_i\subseteq K_0$. Then
$A_\infty (\subseteq K_0)\subseteq K$ is a countable compact subset
with $x_0$ as its unique limit point in $X$. If $l_{n}\le l< l_{n+
1}$ then let $A\subseteq A_\infty$ be $(l, \delta)$-separated w.r.t.
$T$. As $A\setminus A_n\subseteq (B_{d_{l_n}} (x_0, \delta)\cap
K_0)\setminus A_n$ is $(l, \delta)$-separated w.r.t. $T$, because of
the definition of $l_{n+ 1}$ we have $|A\setminus A_n|\le [e^{l h}]-
[e^{l_n h}]+ 1$, which implies
\begin{equation*}
|A|\le |A\setminus A_n|+ |A_n|\le ([e^{l h}]- [e^{l_n h}]+ 1)+
([e^{l_n h}]+ n)= [e^{l h}]+ n+ 1.
\end{equation*}
Then $s_{l} (d, T, \delta, A_\infty)\le [e^{l h}]+ n+ 1$ for all
$l_{n}\le l< l_{n+ 1}$. Note that $s_{l_n} (d, T, \delta,
A_\infty)\ge [e^{l_n h}]+ n$, we conclude $s(d, T, \delta, A_\infty)
= h$. Take $K_{h, \delta}=A_\infty$. This completes the proof.
\end{proof}

Before proving that each expansive TDS is \HUL\ we need the
following result.

\begin{lem}\label{huang}
Let $(X, T)$ be an expansive TDS with metric $d$ and an expansive
constant $\delta> 0$. Then for any compact subset $K\subseteq X$, $h
(T, K)= s (d, T, \frac{\delta}{2}, K)$.
\end{lem}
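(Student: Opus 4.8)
The plan is to prove the two inequalities $s(d,T,\frac{\delta}{2},K)\le h(T,K)$ and $h(T,K)\le s(d,T,\frac{\delta}{2},K)$ separately, the first being essentially free and all the work going into the second. For the first, recall that $s(d,T,\epsilon,K)$ is non-increasing in $\epsilon$ and that $h(T,K)=h^*(d,T,K)=\lim_{\epsilon\to 0+}s(d,T,\epsilon,K)=\sup_{\epsilon>0}s(d,T,\epsilon,K)$; hence the particular value $s(d,T,\frac{\delta}{2},K)$ is at most this supremum. Consequently it suffices to show $s(d,T,\epsilon,K)\le s(d,T,\frac{\delta}{2},K)$ for every sufficiently small $\epsilon$ and then let $\epsilon\to 0+$.

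First I would record the standard uniform form of expansiveness: by a compactness argument, for each $\epsilon>0$ there is $N=N(\epsilon)\in\N$ such that $d(T^ix,T^iy)\le\delta$ for all $|i|\le N$ forces $d(x,y)<\epsilon$. Shifting the index, if $d(T^ix,T^iy)\le\delta$ holds for all $-N\le i\le n-1+N$ then $d_n(x,y)<\epsilon$. The contrapositive is what I will use: two distinct points that are $(n,\epsilon)$-separated must satisfy $d(T^ix,T^iy)>\delta$ for some $i$ in the two-sided window $[-N,\,n-1+N]$.

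Now fix $\epsilon\in(0,\frac{\delta}{2})$ with associated $N$, and let $E\subseteq K$ be an $(n,\epsilon)$-separated set with $|E|=s_n(d,T,\epsilon,K)$. Choose $G\subseteq E$ maximal with respect to being $(n+N,\frac{\delta}{2})$-separated w.r.t. $T$ (i.e.\ in the forward window $[0,\,n+N-1]$); then $|G|\le s_{n+N}(d,T,\frac{\delta}{2},K)$, and maximality means every point of $E$ is $\frac{\delta}{2}$-close to some point of $G$ throughout this forward window. If two distinct points $x,y\in E$ are close to the same $g\in G$, then $d(T^ix,T^iy)\le\delta$ for all $0\le i\le n+N-1$ by the triangle inequality, so the index produced by the contrapositive cannot lie in the forward part and must lie in the backward window $[-N,-1]$, where $d(T^ix,T^iy)>\delta/2$. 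The main point, and the only subtle step, is that this backward window has \emph{fixed} length $N$: applying $T^{-N}$ turns such a family into an $(N,\frac{\delta}{2})$-separated subset of $X$, so it has cardinality at most the constant $C:=s_N(d,T,\frac{\delta}{2},X)$, which is finite by compactness and independent of $n$.

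Combining the two bounds yields $s_n(d,T,\epsilon,K)=|E|\le C\cdot s_{n+N}(d,T,\frac{\delta}{2},K)$. Taking $\frac1n\log$ and then $\limsup_{n\to+\infty}$, the term $\frac{\log C}{n}$ vanishes and $\frac{n+N}{n}\to 1$, so $s(d,T,\epsilon,K)\le s(d,T,\frac{\delta}{2},K)$. Letting $\epsilon\to 0+$ gives $h(T,K)\le s(d,T,\frac{\delta}{2},K)$, which together with the first inequality proves the claim. The one place requiring genuine care is the treatment of the backward coordinates; everything else is bookkeeping with separated sets and the monotonicity of $s$ in the scale.
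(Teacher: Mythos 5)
Your proof is correct, and it rests on the same engine as the paper's: the uniform, finite-window form of expansiveness obtained by compactness (your $N(\epsilon)$ is the paper's $n(\epsilon)$, the only difference being that you take hypothesis threshold $\delta$ rather than $\frac{\delta}{2}$, which is legitimate because the expansiveness condition demands the \emph{strict} inequality $\sup_{n\in\mathbb{Z}}d(T^nx,T^ny)>\delta$). Where you genuinely diverge is in how the backward separation times are disposed of. The paper relocates each $\epsilon$-separation index into a two-sided window and then applies $T^{-n(\epsilon)}$ to the whole separated set, getting $s_m(d,T,\epsilon,K)\le s_{m+2n(\epsilon)}\left(d,T,\frac{\delta}{2},T^{-n(\epsilon)}K\right)$, and returns to $K$ via the identity $s\left(d,T,\frac{\delta}{2},T^{-n(\epsilon)}K\right)=s\left(d,T,\frac{\delta}{2},K\right)$, which is asserted without justification; this identity is not a formal triviality, since $K$ need not be $T$-invariant and fixed-scale separated-set growth (unlike the scale-free quantity $h(T,\cdot)$) is not in general preserved by applying powers of $T$ to a set, and verifying it directly would require a counting argument much like yours. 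You instead never leave $K$: extracting a maximal $(n+N,\frac{\delta}{2})$-separated subset $G$, you note that points in a common fiber are within $\delta$ of one another throughout the forward window (here your choice of threshold $\delta$ in the uniform claim exactly absorbs the triangle-inequality doubling $\frac{\delta}{2}+\frac{\delta}{2}=\delta$ --- this is the one delicate calibration, and you got it right), so the promoted separation index is forced into the fixed window $[-N,-1]$ and each fiber has size at most the constant $C=s_N\left(d,T,\frac{\delta}{2},X\right)$, giving $s_n(d,T,\epsilon,K)\le C\, s_{n+N}\left(d,T,\frac{\delta}{2},K\right)$ and the result. In short, the two proofs share the key lemma but differ in the crucial counting step: the paper's shift trick is shorter on the page but delegates the real content to the unproved fixed-scale identity, while your fibering argument is fully self-contained at exactly that point. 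Both arguments use invertibility (your $T^{-N}$, the paper's $T^{-n(\epsilon)}$), which is available since a TDS here is a homeomorphism.
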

\begin{proof}
Let $K\subseteq X$ be a compact subset and $\epsilon>0$. We claim
that there exists $n(\epsilon)\in \mathbb{N}$ such that for $x,y\in
X$, $d^*_{n(\epsilon)}(x,y)\le \frac{\delta}{2}$ implies $d(x,y)\le
\epsilon$, where
$d_{n(\epsilon)}^*(x,y)=\max_{i=-n(\epsilon)}^{n(\epsilon)}
d(T^ix,T^iy)$. In fact, if it is not the case, then for each $n\in
\mathbb{N}$ there exist $x_n,y_n\in X$ such that $d^*_n(x_n,y_n)\le
\frac{\delta}{2}$ and $d(x_n,y_n)>\epsilon$. W.l.g. we assume
$\lim_{n\rightarrow +\infty} (x_n,y_n)=(x,y)$. Then $d(x,y)\ge
\epsilon$ and $d^*_m(x,y)\le \frac{\delta}{2}$ for each $m\in
\mathbb{N}$, which contradicts that $\delta$ is an expansive
constant of $(X,T)$.

Now for each $m\in \mathbb{N}$, let $E$ be an
$(m,\epsilon)$-separated subset of $K$ w.r.t. $T$, then
$T^{-n(\epsilon)}E$ is an $(m+2n(\epsilon),
\frac{\delta}{2})$-separated subset of $T^{-n(\epsilon)}K$ w.r.t.
$T$. Hence
\begin{eqnarray*}
s (d, T, \epsilon, K)&= & \limsup_{m\rightarrow +\infty}
\frac{1}{m} \log s_m (d, T, \epsilon, K) \\
&\le & \limsup_{m\rightarrow +\infty} \frac{1}{m} \log s_{m+ 2 n
(\epsilon)} \left(d, T, \frac{\delta}{2}, T^{- n (\epsilon)} K\right) \\
&= & s \left(d, T, \frac{\delta}{2}, T^{- n (\epsilon)} K\right)= s
\left(d, T, \frac{\delta}{2}, K\right).
\end{eqnarray*}
Sine $\epsilon> 0$ is arbitrary, letting $\epsilon\rightarrow 0+$ we
conclude $h (T, K)= s (d, T, \frac{\delta}{2}, K)$.
\end{proof}

Now we are ready to prove the main result in this section.

\begin{thm} \label{expansive}
Each expansive TDS is \HUL.
\end{thm}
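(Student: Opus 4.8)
The plan is to fix a non-empty compact $K\subseteq X$ and a value $0\le h\le h(T,K)$, and to produce the required $K_h\subseteq K$ by splitting into cases according to the size of $h$. Write $\delta$ for an expansive constant of $(X,T)$. If $h=0$, take $K_h=\{x\}$ for any $x\in K$: this is compact, has no limit point at all, and $h(T,\{x\})=0$. The remaining cases are $0<h<h(T,K)$ and $h=h(T,K)$ (so in particular $h(T,K)>0$), and the guiding idea is that Lemma \ref{huang} lets us run every entropy estimate at the single fixed scale $\frac{\delta}{2}$ rather than letting $\epsilon\to 0$.

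For $0<h<h(T,K)$, I would invoke Lemma \ref{good} but with the separation scale pinned to $\frac{\delta}{2}$. By Lemma \ref{huang}, $s(d,T,\frac{\delta}{2},L)=h(T,L)$ for every compact $L$; applied to the subset $K_0$ used in the construction of Lemma \ref{good} (which satisfies $h(T,K_0)>h$) this gives $s(d,T,\frac{\delta}{2},K_0)=h(T,K_0)>h$, which is exactly the inequality that construction needs. Since $s(d,T,\cdot,K_0)$ is non-increasing in the scale, this means $\frac{\delta}{2}$ is an admissible choice of separation scale in Lemma \ref{good}, so Lemma \ref{good} yields a countable compact $K_h\subseteq K$ with a unique limit point and $s(d,T,\frac{\delta}{2},K_h)=h$. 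Applying Lemma \ref{huang} once more, $h(T,K_h)=s(d,T,\frac{\delta}{2},K_h)=h$, and $K_h$ has a unique (hence at most one) limit point, as required.

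The case $h=h(T,K)>0$ is the genuine obstacle, since Lemma \ref{good} requires the strict inequality $h<h(T,K)$ and degenerates at the top value. Here I would instead produce an \emph{entropy point}, i.e. a point $x_*\in K$ with $h(x_*,K)=h(T,K)$. The key observation, special to the expansive setting, is that the local entropy function $x\mapsto h(x,K)$ is upper semi-continuous. To see this, combine the standard comparisons $r_n(d,T,\epsilon,\cdot)\le s_n(d,T,\epsilon,\cdot)\le r_n(d,T,\frac{\epsilon}{2},\cdot)$ with Lemma \ref{huang}: for every compact $L$ and every $\epsilon\le\frac{\delta}{4}$ one obtains $h(T,L)=s(d,T,\frac{\delta}{2},L)\le r(d,T,\frac{\delta}{4},L)\le r(d,T,\epsilon,L)\le h(T,L)$, so $r(d,T,\epsilon,L)=h(T,L)$ stabilizes for all small $\epsilon$. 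Consequently $h_d(x,\epsilon,K)=\inf\{h(T,L):L\text{ is a compact neighborhood of }x\text{ in }K\}$ is independent of $\epsilon$ once $\epsilon\le\frac{\delta}{4}$, whence $h(x,K)=h_d(x,\frac{\delta}{4},K)$, which is u.s.c. on $K$ by Theorem \ref{ye-zhang}(1).

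Since $K$ is compact and $x\mapsto h(x,K)$ is u.s.c., it attains its supremum; by Theorem \ref{ye-zhang}(1) that supremum equals $h(T,K)$, so there is $x_*\in K$ with $h(x_*,K)=h(T,K)$. I would then apply Theorem \ref{ye-zhang}(2) to $x_*$, obtaining a countable compact $E_{x_*}\subseteq K$ whose unique limit point is $x_*$ and with $h(T,E_{x_*})=h(x_*,K)=h(T,K)$; taking $K_h=E_{x_*}$ finishes this case and the proof. The main work is thus concentrated in establishing the stabilization $r(d,T,\epsilon,L)=h(T,L)$ for small $\epsilon$ and the resulting upper semi-continuity of $h(\cdot,K)$ in the expansive case; once an entropy point is known to exist, the two cited results from \cite{YZ} do the rest.
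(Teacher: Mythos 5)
Your proposal is correct and follows essentially the same route as the paper's proof: the same three-way case split, with Lemma \ref{good} combined with Lemma \ref{huang} handling $0<h<h(T,K)$, and the upper semi-continuity of the local entropy $h(\cdot,K)$ (via Theorem \ref{ye-zhang} (1) and (2)) producing an entropy point that settles the top case $h=h(T,K)>0$. The only cosmetic difference is that you pin the separation scale to $\frac{\delta}{2}$ by inspecting the construction inside Lemma \ref{good}, whereas the paper uses the lemma as stated with some $\epsilon\le\delta$ and then observes that $2\epsilon$ is again an expansive constant before applying Lemma \ref{huang}.
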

\begin{proof}
Let $(X, T)$ be an expansive TDS with metric $d$ and an expansive
constant $2 \delta> 0$.

Let $E\subseteq X$ be a non-empty compact subset and $0\le h\le h
(T, E)$. If $h= 0$ then $h (T, \{x\})= 0$ for any $x\in E$. Now
assume $h=h (T, E)>0$, then by Theorem \ref{ye-zhang} (1), $h_d (x,
\delta, E)$ is u.s.c. on $E$ and $h (T, E)= \sup_{x\in X} h (x, E)$.
Note that
\begin{eqnarray*}
h (x, E)&= & \lim_{\epsilon\rightarrow 0 +} \inf \{r (d, T,
\epsilon, K): K\ \text{is a compact neighborhood of}\ x\ \text{in}\
E\} \\
&= & \lim_{\epsilon\rightarrow 0 +} \inf \{s (d, T, \epsilon, K): K\
\text{is a compact neighborhood of}\ x\ \text{in}\
E\} \\
&= & h_d (x,\delta, E)\ (\text{using Lemma \ref{huang}}),
\end{eqnarray*}
then $h (T, E)= \max_{x\in E} h_d (x, \delta, E)$. Say $x_0\in E$
with $h_d (x_0, \delta, E) (= h (x_0, E)) = h (T, E)>0$. By Theorem
\ref{ye-zhang} (2) there exists a countable compact subset
$K_0\subseteq E$ with $x_0$ as its unique limit point such that $h
(T, K_0)= h (x_0, E) = h (T, E)$. This completes the proof in the
case of $h= h (T, E)>0$.

Now assume $0< h< h (T, E)$. By Lemma \ref{good} there exist $0<
\epsilon\le \delta$ small enough and a countable compact subset
$K_h\subseteq E$ with a unique limit point such that $ s(d, T, \ep,
K_h)= h$. Since $2 \epsilon$ is also an expansive constant, $h (T,
K_h)= h$ by Lemma \ref{huang}.

Thus for each non-empty compact subset $E$ and each $0\le h\le h (T,
E)$ there is a non-empty compact subset $K_h\subseteq K$ with a
unique limit point such that $h (T, K_h)= h$, that is, TDS $(X, T)$
is \HUL.
\end{proof}

 Note that we can also introduce the \HUL\
property for a GTDS, the results of this section remain true for a
GTDS. In particular, following similar discussions, the results in
Lemma \ref{huang} and Theorem \ref{expansive} also hold for each
positively expansive dynamical system. Let $X$ be a compact metric
space endowed with a continuous surjection $T: X\rightarrow X$ and a
compatible metric $d$. Recall that we say $(X, T)$ {\it positively
expansive} if there exists $\delta> 0$ such that $x\neq y$ implies
$\sup_{n\in \mathbb{Z}_+} d (T^n x, T^n y)> \delta$ (see for example
\cite{RW}). We also call $\delta$ an {\it expansive constant}.

\section{A \HUL\ TDS is asymptotically h-expansive}

In this section we shall answer Question \ref{q 3} partially. Note
that the invertibility can be removed for TDSs considered in this
section without changing our results.

\medskip

We discuss two classes of weak expansiveness: the
$h$-expansiveness and asymptotical $h$-expansiveness, introduced
by Bowen \cite{Bex} and Misiurewicz  \cite{Mi}, respectively. Let
$(X, T)$ be a GTDS with metric $d$. For each $\epsilon> 0$ we
define
\begin{equation*}
h_T^*(\epsilon)= \sup_{x\in X} h (T, \Phi_{\epsilon} (x)), \
\text{where}\ \Phi_{\epsilon} (x)= \{y\in X: d(T^n x, T^n y)\le
\epsilon\ \text{if}\ n\ge 0\}.
\end{equation*}
$(X, T)$ is called {\it $h$-expansive} if there exists an $\epsilon>
0$ such that $h_T^*(\epsilon)= 0$, and is called {\it asymptotically
$h$-expansive} if $\lim_{\epsilon\rightarrow 0+} h_T^*(\epsilon)=0$.
It is shown by Bowen \cite{Bex} that positively expansive systems,
expansive homeomorphisms, endomorphisms of a compact Lie group and
Axiom $A$ diffeomorphisms are all $h$-expansive, by Misiurewicz
\cite{Mi1} that every continuous endomorphism of a compact metric
group is asymptotically $h$-expansive if it has finite entropy, and
by Buzzi \cite{Bu} that any $C^\infty$ diffeomorphism on a compact
manifold is asymptotically $h$-expansive.

In this section we prove that each \HUL\ TDS is asymptotically
$h$-expansive.

\medskip

The following two results seem too technical but interesting
themselves, which are needed in proving the main result of this
section.

\begin{thm}\label{zero}
Let $(X,T)$ be a TDS. Then for any compact subset $K\subseteq X$
with $h (T, K)> 0$, there is a countable infinite compact subset
$K_\infty\subseteq K$ such that $h (T, K_\infty)=0$.
\end{thm}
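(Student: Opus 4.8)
The plan is to notice that the hypothesis $h(T,K)>0$ is used only to guarantee that $K$ is infinite (finite sets carry zero entropy), and then to build $K_\infty$ by a direct thinning construction rather than through the local-entropy machinery of Theorem \ref{ye-zhang}. Since $K$ is an infinite compact metric space it has a limit point $x_0\in K$, and one may fix distinct points $y_1,y_2,\dots\in K\setminus\{x_0\}$ with $y_i\to x_0$. I will extract a sufficiently sparse subsequence $z_j=y_{i_j}$ and set $K_\infty=\{x_0\}\cup\{z_j:j\ge 1\}$; this is automatically a countably infinite compact subset of $K$ whose only limit point is $x_0$, so the whole task reduces to forcing $h(T,K_\infty)=0$.

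First I would record the elementary separated-set estimate that drives everything. Writing $A=\{y\in K_\infty:d_n(y,x_0)\le \epsilon/2\}$ and $B=K_\infty\setminus A$, the triangle inequality for the metric $d_n$ shows that any two points of $A$ lie within $\epsilon$ of each other in $d_n$, so an $(n,\epsilon)$-separated subset of $K_\infty$ can meet $A$ in at most one point. Since $x_0\in A$, this yields
\[
s_n\!\left(d,T,\epsilon,K_\infty\right)\le 1+\bigl|\{j:d_n(z_j,x_0)>\epsilon/2\}\bigr|.
\]
Thus it suffices to arrange, for every fixed $\delta>0$, that the number of indices $j$ whose orbit has separated from that of $x_0$ at scale $\delta$ by time $n$ grows subexponentially in $n$.

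The construction is diagonal. Choosing the subsequence inductively, at stage $j$ I pick $i_j$ large (using that $y_i\to x_0$, hence $d_j(y_i,x_0)\to 0$ as $i\to\infty$ for the fixed index $j$) so that $d_j(z_j,x_0)\le 1/j$. The crucial point is that $d_n(\cdot,\cdot)$ is non-decreasing in $n$, so for $n\le j$ one has $d_n(z_j,x_0)\le d_j(z_j,x_0)\le 1/j$. Consequently, once $j\ge 1/\delta$ the quantity $d_n(z_j,x_0)$ cannot exceed $\delta$ until $n>j$; hence $d_n(z_j,x_0)>\delta$ forces $j\le n-1$, and
\[
\bigl|\{j:d_n(z_j,x_0)>\delta\}\bigr|\le n+\lceil 1/\delta\rceil .
\]
Feeding this into the estimate above with $\delta=\epsilon/2$ gives $s_n(d,T,\epsilon,K_\infty)\le n+\lceil 2/\epsilon\rceil+1$, a linear bound, whence $s(d,T,\epsilon,K_\infty)=\limsup_{n}\frac1n\log s_n(d,T,\epsilon,K_\infty)=0$ for every $\epsilon>0$, and therefore $h(T,K_\infty)=\lim_{\epsilon\to 0+}s(d,T,\epsilon,K_\infty)=0$.

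I expect the only genuine obstacle to be killing the entropy at all scales $\epsilon$ simultaneously: for very small $\delta$ the separation times $\sigma_j(\delta)=\min\{n:d_n(z_j,x_0)>\delta\}$ can be small, so no single scale controls the rest. The diagonal choice $d_j(z_j,x_0)\le 1/j$ is precisely what couples the index $j$ to the separation time uniformly in $\delta$, via monotonicity of $d_n$ in $n$, turning the count of ``early separators'' into a linear function of $n$ at each fixed scale. Everything else — compactness and countable infiniteness of $K_\infty$, and the identity $h(T,K_\infty)=\lim_{\epsilon\to 0+}s(d,T,\epsilon,K_\infty)$ recalled in Section 2 — is routine.
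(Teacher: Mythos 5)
Your proof is correct, and it takes a genuinely different --- and considerably more elementary --- route than the paper's. The paper first invokes Theorem \ref{ye-zhang} to locate a countable compact $K_0\subseteq K$ with a unique limit point and positive entropy $h$, then iterates Lemma \ref{good} to build a nested chain $K_0\supseteq K_1\supseteq K_2\supseteq\cdots$ with $s(d,T,\epsilon_n,K_n)=\frac{h}{n+1}$ at scales $\epsilon_n\to 0$, and assembles $K_\infty$ from one point $y_n$ of each difference $K_n\setminus K_{n+1}$, so that up to a finite set $K_\infty$ sits inside $K_n$ and $s(d,T,\epsilon_n,K_\infty)\le h/n\to 0$. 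You dispense with both ingredients: the hypothesis $h(T,K)>0$ is used only to force $K$ to be infinite, and the diagonal choice $d_j(z_j,x_0)\le 1/j$ (legitimate, since $d_j(y_i,x_0)\to 0$ as $i\to\infty$ by continuity of $T,\dots,T^{j-1}$) together with monotonicity of $n\mapsto d_n$ yields, at every scale simultaneously, the linear bound $s_n(d,T,\epsilon,K_\infty)\le n+\lceil 2/\epsilon\rceil+1$. The two steps that carry the weight both check out: the triangle inequality correctly limits an $(n,\epsilon)$-separated set to at most one point of the $d_n$-ball of radius $\epsilon/2$ about $x_0$, and the count of ``early separators'' is right, since $d_n(z_j,x_0)>\epsilon/2$ forces $j<n$ or (via $d_n(z_j,x_0)\le d_j(z_j,x_0)\le 1/j$ for $j\ge n$) $j<2/\epsilon$; moreover $d(z_j,x_0)\le 1/j$ makes $K_\infty$ compact with unique limit point $x_0$, and the identity $h(T,K_\infty)=\lim_{\epsilon\to 0+}s(d,T,\epsilon,K_\infty)$ is exactly what Section 2 records. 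Your argument in fact proves a slightly stronger and cleaner statement --- every \emph{infinite} compact subset contains a countably infinite compact subset with a unique limit point and zero entropy --- and it never uses invertibility, so it applies verbatim to GTDSs. What the paper's route buys is mainly economy within the paper (Lemma \ref{good} is needed anyway for Theorem \ref{expansive}) and scale-by-scale quantitative control of the separated-set growth of the nested sets; none of that extra structure is exploited in the later uses of Theorem \ref{zero} (in the remark following Lemma \ref{sum} and in Theorem \ref{haech}), where your simpler construction would serve equally well.
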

\begin{proof}
First, there is a countable compact subset $K_0= \{x, x_1, x_2,
\cdots\}\subseteq K$ such that $h= h (T, K_0)> 0$ and
$\lim_{n\rightarrow +\infty} x_n= x$ (using Theorem \ref{ye-zhang}).

Let $d$ be a metric on $(X, T)$. For sufficiently small $\ep_1> 0$
let $K_1\subseteq K_0$ be the subset constructed in Lemma \ref{good}
such that $s (d, T, \ep_1, K_1)= \frac{h}{2}$. Now if $K_n$, $n\in
\N$, is constructed, for a more smaller $0< \ep_{n+ 1}< \ep_n$, by
Lemma \ref{good} we let $K_{n+1}$ be a proper compact subset of
$K_n$ with $s (d, T, \ep_{n+1}, K_{n+1})= \frac{h}{n+ 2}$. In fact,
we can require that $\lim_{n\ra +\infty} \ep_n= 0$. Now let
$K_\infty= \{x, y_1, y_2, \cdots\}$ be a subset of $K_0$, where
$y_n\in K_n\setminus K_{n+1}$ for each $n\in\N$.

It is clear that $K_\infty\subseteq K$ is a countable infinite
compact subset, and $$s (d, T, \ep_n, K_\infty)\le s(d, T, \ep_n,
K_n)\le \frac{h}{n}$$ for each $n\in \mathbb{N}$, as
$K_\infty\setminus \{y_1, \cdots, y_{n-1}\}\subseteq K_n$. Hence we
have $$h (T, K_\infty)= \lim_{n\ra +\infty} s(d, T, \ep_n,
K_\infty)= 0.$$ This completes the proof.
\end{proof}
%\begin{rem}
%\end{rem}

\begin{lem}\label{sum}
Let $(X,T)$ be a TDS. Assume that $\{B_n\}_{n\in
\mathbb{N}}\subseteq 2^X$ satisfies $\lim_{n\rightarrow +\infty}
B_n=\{x_0\}$ (in the sense of Hausdorff metric) for some $x_0\in X$
and
\begin{equation} \label{diameter}
\inf_{J\in \mathbb{Z}_+} \lim_{n\rightarrow +\infty} \sup_{j\ge J}
 \text{diam} (T^j B_n)= 0.
\end{equation}
Let $x_n\in B_n$ for each $n\in \N$. Then
\begin{equation} \label{equality}
h\left(T, \bigcup_{n=1}^{+ \infty} B_n \cup \{x_0\}\right)=
\max\left\{\sup_{n\in \mathbb{N}} h(T, B_n), h(T,
\{x_i\}_0^\infty)\right\}.
\end{equation}
If in addition $x_0\not \in B_n$ for each $n\in \mathbb{N}$, then
any countable compact subset of $\bigcup_{n=1}^{+ \infty} B_n \cup
\{x_0\}$ with a unique limit point $x_0$ has entropy at most
$h(T,\{x_i\}_0^\infty)$.
 \end{lem}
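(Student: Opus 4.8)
The plan is to prove the two inequalities giving \eqref{equality} and then obtain the addendum by the same mechanism. Write $Y=\bigcup_{n=1}^{+\infty}B_n\cup\{x_0\}$, $a=\sup_n h(T,B_n)$ and $b=h(T,\{x_i\}_0^\infty)$. The inequality $h(T,Y)\ge\max\{a,b\}$ is immediate, since every $B_n$ and the whole set $\{x_i\}_0^\infty$ are subsets of $Y$ and $h(T,\cdot)$ increases with respect to the set variable. So the content is the reverse estimate $h(T,Y)\le\max\{a,b\}$, for which I would work with the Bowen separated quantities $s_m(d,T,\epsilon,\cdot)$ and $s(d,T,\epsilon,\cdot)$.

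The key preliminary step, and the one I expect to be the crux, is to upgrade the two hypotheses into a single uniform smallness statement: for every $\rho>0$ there is $N=N(\rho)$ such that $\text{diam}(T^jB_n)<\rho$ for all $j\ge 0$ and all $n\ge N$. To get this I would fix $\rho$ and use \eqref{diameter} to pick $J$ with $\limsup_{n\to\infty}\sup_{j\ge J}\text{diam}(T^jB_n)<\rho$, controlling the tail coordinates $j\ge J$ for large $n$; for the finitely many coordinates $0\le j<J$, the Hausdorff convergence $B_n\to\{x_0\}$ and continuity of $\widehat{T}^j$ on $2^X$ give $T^jB_n\to\{T^jx_0\}$, hence $\text{diam}(T^jB_n)\to 0$ as $n\to\infty$. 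Combining the two bounds over the disjoint ranges of $j$ yields the claim. This reduction is what makes the bookkeeping tractable: it forces each $B_n$ with $n\ge N$ to be $\rho$-thin along its entire orbit, so that $d_m(y,x_n)<\rho$ for every $y\in B_n$ and every $m$.

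With this in hand I would fix $\epsilon>0$, set $\rho=\epsilon/3$ and $N=N(\rho)$, and bound an arbitrary $(m,\epsilon)$-separated set $E\subseteq Y$. Split $E=E_1\cup E_2$ with $E_1=E\cap\bigcup_{n<N}B_n$ and $E_2=E\setminus E_1\subseteq\bigcup_{n\ge N}B_n\cup\{x_0\}$. The set $\bigcup_{n<N}B_n$ is a fixed compact set, so $|E_1|\le s_m(d,T,\epsilon,\bigcup_{n<N}B_n)$, whose exponential growth rate is at most $\max_{n<N}h(T,B_n)\le a$ by finite stability of $h(T,\cdot)$. For $E_2$, uniform smallness gives $d_m(y,y')<\rho<\epsilon$ whenever $y,y'$ lie in the same $B_n$ with $n\ge N$, so $E_2$ meets each such $B_n$ in at most one point; sending each $y\in E_2\cap B_n$ to $x_n$ (and $x_0$ to itself) defines an injection $\pi$ of $E_2$ into $\{x_i\}_0^\infty$. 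A triangle-inequality estimate in the $d_m$ metric, using $d_m(y,x_n)<\rho$, shows that distinct points of $E_2$ map to $(m,\epsilon/3)$-separated points, whence $|E_2|\le s_m(d,T,\epsilon/3,\{x_i\}_0^\infty)$. Taking $\frac{1}{m}\log$, using $\limsup_m\frac{1}{m}\log(u_m+v_m)=\max\{\limsup_m\frac{1}{m}\log u_m,\ \limsup_m\frac{1}{m}\log v_m\}$, and letting $\epsilon\to 0+$ gives $h(T,Y)\le\max\{a,b\}$.

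For the addendum, suppose $x_0\notin B_n$ for all $n$ and let $F\subseteq Y$ be a countable compact set with unique limit point $x_0$, so $F=\{x_0\}\cup\{f_k\}$ with $f_k\to x_0$ and $f_k\neq x_0$. The only change is that the finite-index part now contributes nothing: since $\bigcup_{n<N}B_n$ is compact and does not contain $x_0$, it has positive distance to $x_0$, and $f_k\to x_0$ then forces $F\cap\bigcup_{n<N}B_n$ to be finite. Running the same separated-set argument on $F$, the $E_1$-part is bounded by a constant independent of $m$ (zero growth rate), while the $E_2$-part is again controlled by $s_m(d,T,\epsilon/3,\{x_i\}_0^\infty)$; hence $h(T,F)\le b=h(T,\{x_i\}_0^\infty)$, as desired. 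The points to handle carefully are the uniform smallness reduction and the discipline of fixing $\epsilon$ \emph{before} choosing $N$ (and thus the finite-index cutoff), so that the $\epsilon$-dependence of the splitting causes no circularity.
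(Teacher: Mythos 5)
Your proof is correct, and it differs from the paper's in how the hypothesis \eqref{diameter} is exploited. The counting core is the same in both arguments: split $Y$ into the finitely many ``large'' sets $B_n$, $n<N$, plus a uniformly thin tail, collapse each thin $B_n$ to the marker $x_n$, and compare separated sets via the triangle inequality (your $\rho=\epsilon/3$ plays the role of the paper's $\epsilon/4$). But since \eqref{diameter} only controls $\text{diam}(T^jB_n)$ for $j\ge j_\epsilon$, the paper runs the count on the shifted set $X_1=T^{j_\epsilon}\bigl(\bigcup_{n}B_n\cup\{x_0\}\bigr)$ and returns via the invariance of the separation rate under forward images (a step it states without proof, and which at fixed $\epsilon$ requires a small extra argument of the kind used in Lemma \ref{huang}); you instead upgrade the two hypotheses to uniform thinness --- $\text{diam}(T^jB_n)<\rho$ for \emph{all} $j\ge 0$ once $n\ge N(\rho)$ --- by handling the finitely many head coordinates $0\le j<J$ through the Hausdorff convergence $B_n\to\{x_0\}$, the continuity of the induced map $\widehat{T}$ on $2^X$, and the ($2$-Lipschitz) continuity of diam on $2^X$. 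This is a genuine simplification: no time shift is needed, and the argument works verbatim for non-invertible maps. For the addendum the routes also differ: the paper reduces to \eqref{equality} applied to the auxiliary family $\{(K\cap B_n)\cup\{x_n\}\}_{n\in\mathbb{N}}$, using that each $K\cap B_n$ is finite, whereas you rerun the count directly, observing that $\bigcup_{n<N}B_n$ is compact and misses $x_0$, hence meets $F$ in a finite set whose contribution has zero growth rate --- equally valid and self-contained. Two small points to write out in a polished version: the $B_n$ need not be pairwise disjoint, so your map $y\mapsto x_n$ involves a choice of $n$ for each $y\in E_2$ (the triangle-inequality estimate $d_m(x_n,x_{n'})>\epsilon-2\rho=\epsilon/3$ is unaffected by the choice); and injectivity of this map is itself a \emph{consequence} of that estimate, since a priori $x_n=x_{n'}$ could occur for $n\neq n'$, so the clean formulation is that the image points form an $(m,\epsilon/3)$-separated subset of $\{x_i\}_0^\infty$, which is exactly what your computation delivers.
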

\begin{proof}
Let $d$ be a metric on $(X, T)$ and $\epsilon> 0$. Thus by
\eqref{diameter} there exist $i_\epsilon, j_\epsilon\in \mathbb{N}$
such that if $i\ge i_\epsilon$ then $\epsilon_i<
\frac{\epsilon}{4}$, where $\epsilon_i= \sup_{j\ge j_\epsilon}
\text{diam} (T^j B_i)$. Set $X_1= T^{j_\epsilon} (\bigcup_{i\in
\mathbb{N}} B_i \cup\{x_0\})$.

For $n\in\N$, let  $E_n$ be an $(n,\epsilon)$-separated subset of
$X_1$ w.r.t. $T$ with $s_n (d, T, \epsilon, X_1)=|E_n|$. It is clear
that
 if $i\ge i_{\epsilon}$ then $|E_n\cap T^{j_\epsilon} B_{i}|\le 1$,
and if $y\in E_n\cap T^{j_\epsilon} B_i$ then $d_n (y,
T^{j_\epsilon} x_i)\le \epsilon_i< \frac{\epsilon}{4}$. Say $E_n\cap
T^{j_\epsilon} (\bigcup_{i\ge i_{\epsilon}} B_i)= \{ y_1, \cdots,
y_l\}$. For each $1\le r \le l$, there exists $i_r\ge i_{\epsilon}$
such that $y_r\in E_n\cap T^{j_\epsilon} B_{i_r}$. Let $F_n=
\{T^{j_\epsilon}x_{i_1}, \cdots, T^{j_\epsilon} x_{i_r}\}$. For each
$1\le r_1<r_2\le l$,
\begin{equation*}
d_n (T^{j_\epsilon} x_{i_{r_1}}, T^{j_\epsilon} x_{i_{r_2}})\ge d_n
(y_{r_1}, y_{r_2})- (d_n (y_{r_1}, T^{j_\epsilon} x_{i_{r_1}})+ d_n
(y_{r_2}, T^{j_\epsilon} x_{i_{r_2}}))> \frac{\epsilon}{2}.
\end{equation*}
Hence, $F_n$ is an $(n,\frac{\epsilon}{2})$-separated subset of
$T^{j_\epsilon} (\{x_i\}_0^\infty)$ w.r.t. $T$, which implies $s_n
(d, T, \frac{\epsilon}{2}, T^{j_\epsilon} (\{x_i\}_0^\infty))$ $\ge
l$. Note that $G_n\doteq E_n\cap T^{j_\epsilon}
(\bigcup_{i=1}^{i_\epsilon-1} B_i)$ is an $(n,\epsilon)$-separated
subset of $T^{j_\epsilon} (\bigcup_{i=1}^{i_\epsilon-1}B_i)$ w.r.t.
$T$, we have
\begin{eqnarray*}
 s_n (d, T, \epsilon, X_1)&= & |E_n| \le |G_n\cup \{y_1, \cdots,
y_l\}\cup \{T^{j_\epsilon} x_0\}|\le |G_n|+ l+ 1\\
&\le & s_n \left(d, T, \epsilon, T^{j_\epsilon}
\left(\bigcup_{i=1}^{i_\epsilon- 1} B_i\right)\right)+ s_n \left(d,
T, \frac{\epsilon}{2}, T^{j_\epsilon} (\{x_i\}_0^\infty)\right)+ 1,
\end{eqnarray*}
which implies that
\begin{eqnarray*}
& & s \left(d, T, \epsilon, \bigcup_{i\in \mathbb{N}} B_i
\cup\{x_0\}\right)= s
(d, T, \epsilon, X_1) \\
&\le & \max \left\{s \left(d, T, \epsilon, T^{j_\epsilon}
\left(\bigcup_{j= 1}^{i_\epsilon- 1} B_j\right)\right), s \left(d,
T, \frac{\epsilon}{2}, T^{j_\epsilon}
(\{x_i\}_0^\infty)\right)\right\}\\
&= & \max \left\{s \left(d, T, \epsilon, \bigcup_{j= 1}^{i_\epsilon-
1} B_j\right), s \left(d, T,
\frac{\epsilon}{2}, \{x_i\}_0^\infty\right)\right\} \\
&\le & \max \left\{\max_{1\le j\le i_\epsilon- 1} h(T, B_i), h(T,
\{x_i\}_0^\infty)\right\}.
\end{eqnarray*}
Thus we obtain the direction "$\le$" of \eqref{equality}. The other
direction is obvious.

If in additional $x_0\not\in B_n$ for each $n\in \mathbb{N}$, let
$K\subseteq \bigcup_{n=1}^{+ \infty} B_n \cup\{x_0\}$ be any
countable compact subset with a unique limit point $x_0$. Set
$B_n'=K\cap B_n$. Then $B_n'$ is finite for each $n\in \mathbb{N}$,
as $x_0$ is the unique limit point of $K$ and $x_0\not \in B_n'$. So
we have $\{ B_n'\cup\{x_n\}\}_{n\in \mathbb{N}} \subseteq 2^X$ and
\begin{eqnarray*}
h (T, K)& \le & h (T, K\cup \{x_i\}_{i=0}^\infty)=h\left(T,
\bigcup_{n=1}^{+ \infty}
 (B_n' \cup \{x_n\}) \cup \{x_0\} \right) \\
&= & \max\left\{\sup_{n\in \mathbb{N}} h(T, B_n'\cup\{x_n\}), h
(T, \{x_i\}_0^\infty)\right\}\
(\text{using \eqref{equality}}) \\
&= & h(T, \{x_i\}_0^\infty)\ (\text{as $B_n'\cup \{x_n\}$ is a
finite subset for each $n\in \mathbb{N}$}).
\end{eqnarray*}
This finishes the proof.
\end{proof}

\begin{rem}
Without the assumption of \eqref{diameter}, in general Lemma
\ref{sum} doesn't hold.\end{rem}

For example, let $\{x_n\}_{n\in \mathbb{N}}$ be a sequence of $X$
with limit $x\in X$ and $h (T, \{x, x_1, x_2, \cdots\})$ $= a> 0$.

By Theorem \ref{zero} there is a sub-sequence $\{n_i\}_{i\in
\mathbb{N}}$ such that $h(T, \{x, x_{n_1}, x_{n_2}, \cdots\})$ $=
0$. Let $B_j= \{x_{n_{j-1}}, x_{n_{j-1}+ 1}, \cdots, x_{n_{j}- 1}\}$
for each $j\in \mathbb{N}$, where $n_0= 1$. Then $\lim_{j\rightarrow
+\infty} \text{diam} (B_j)= 0$ and $h (T, \bigcup_{j\in \mathbb{N}}
B_j\cup \{x\})= a> 0$, but $\sup_{j\in \mathbb{N}} h (T, B_j) + h(T,
\{x, x_{n_1}, x_{n_2}, \cdots\})= 0$.

In fact, for a good choice of the sub-sequence $\{n_i\}_{i\in
\mathbb{N}}$ in the above construction, we can require that $h (T,
\bigcup_{i\in \mathbb{N}} B_{k_i}\cup \{x\})= a> 0$ for any
sub-sequence $\{k_i\}_{i\in \mathbb{N}}\subseteq \mathbb{N}$. This
is done as follows.

For each $j\in \mathbb{N}$ we can select a sub-sequence
$\{m^{j}_k\}_{k\in \mathbb{N}}\subseteq \N$ such that
\begin{eqnarray*}
& & \lim_{k\rightarrow +\infty} \frac{\log s_{m^{j}_k} (d, T,
\frac{1}{j}, \{x_l: l\ge n_i\})}{m^{j}_k} \\
&= & s \left(d, T, \frac{1}{j}, \{x_l: l\ge n_i\}\right) \left(= s
\left(d, T, \frac{1}{j}, \{x,x_1,x_2,\ldots\}\right)\right)
\end{eqnarray*}
 for each $i\in
\mathbb{N}$. We may assume (replace the sequences $\{n_i\}_{i\in
\mathbb{N}}$ and $\{m^j_k\}_{k\in \mathbb{N}}$ by sub-sequences if
necessary)
\begin{eqnarray*}
s_{m^{j}_i} \left(d, T, \frac{1}{j}, \{x_l: n_i\le l< n_{i+
1}\}\right)\ge e^{m^{j}_i (s (d, T, \frac{1}{j}, \{x, x_1, x_2,
\cdots\})- \frac{1}{i})}\ \text{if}\ 1\le j\le i.
\end{eqnarray*}
Let $B_j= \{x_{n_{j-1}}, x_{n_{j-1}+1}, \cdots, x_{n_{j}- 1}\}$.
Then $$\sup_{j\in \mathbb{N}} h (T, B_j) + h(T, \{x, x_{n_1},
x_{n_2}, \cdots\})= 0.$$ Now for any sub-sequence $\{k_i\}_{i\in
\mathbb{N}}\subseteq \mathbb{N}$ we have: if $l\in \mathbb{N}$ and
$1\le j\le k_l$ then
\begin{eqnarray*}
s_{m^{j}_{k_l}} \left(d, T, \frac{1}{j}, \bigcup_{i\in \mathbb{N}}
B_{k_i}\cup \{x\}\right)\ge s_{m^{j}_{k_l}} \left(d, T, \frac{1}{j},
B_{k_l}\right)\ge e^{m^{j}_{k_l} (s (d, T, \frac{1}{j}, \{x, x_1,
x_2, \cdots\})- \frac{1}{k_l})},
\end{eqnarray*}
which implies that for each fixed $j\in \mathbb{N}$
\begin{eqnarray*}
& &s \left(d, T, \frac{1}{j}, \{x, x_1, x_2, \cdots\}\right)\ge s
\left(d, T, \frac{1}{j},
\bigcup_{i\in \mathbb{N}} B_{k_i}\cup \{x\}\right) \\
&\ge & \limsup_{l\rightarrow +\infty} \frac{1}{m^j_{k_l}} \log
s_{m^{j}_{k_l}} \left(d, T, \frac{1}{j}, \bigcup_{i\in \mathbb{N}}
B_{k_i}\cup \{x\}\right) \ge s \left(d, T, \frac{1}{j}, \{x, x_1,
x_2, \cdots\}\right).
\end{eqnarray*}
Then letting $j\rightarrow +\infty$ we have $h (T, \bigcup_{i\in
\mathbb{N}} B_{k_i}\cup \{x\})= h (T, \{x, x_1, x_2, \cdots\})= a$.

\medskip
Now we are ready to prove the main result in this section.

\begin{thm} \label{haech}
Each \HUL\ TDS is asymptotically h-expansive.
\end{thm}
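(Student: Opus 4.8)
The plan is to argue by contraposition: assuming $(X,T)$ is \emph{not} asymptotically $h$-expansive, I will manufacture a compact set that violates the \HUL\ property. Since $\epsilon\mapsto h_T^*(\epsilon)$ is monotone (smaller $\epsilon$ shrinks each $\Phi_{\epsilon}(x)$), non-asymptotic-$h$-expansiveness means $c:=\lim_{\epsilon\to 0+}h_T^*(\epsilon)>0$, so $h_T^*(\epsilon)\ge c$ for every $\epsilon>0$. For a sequence $\epsilon_n\downarrow 0$ I would pick $z_n\in X$ with $h(T,\Psi_n)>c/2$, where $\Psi_n:=\Phi_{\epsilon_n}(z_n)$. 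The essential feature of these blocks is the uniform forward contraction $\text{diam}(T^j\Psi_n)\le 2\epsilon_n$ for all $j\ge 0$, which both forces $\Psi_n$ to collapse to a point and supplies the hypothesis \eqref{diameter} of Lemma \ref{sum}.

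Passing to a subsequence I may assume $z_n\to x_0$; then $\text{diam}(\Psi_n)\le 2\epsilon_n\to 0$ yields $\Psi_n\to\{x_0\}$ in the Hausdorff metric, and by discarding finitely many blocks I arrange $x_0\notin\Psi_n$. Applying Theorem \ref{zero} to the skeleton $\{x_0\}\cup\{z_n\}$ and passing to a further subsequence, I may assume $h(T,\{x_0\}\cup\{z_n\})=0$, and take $x_n:=z_n$ as the representatives in $\Psi_n$. Set $K:=\{x_0\}\cup\bigcup_n\Psi_n$, which is compact because any sequence in $K$ either lingers in finitely many $\Psi_n$ or converges to $x_0$. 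By \eqref{equality} in Lemma \ref{sum}, $h(T,K)=\max\{\sup_n h(T,\Psi_n),\,h(T,\{x_i\}_0^\infty)\}=\sup_n h(T,\Psi_n)=:S\ge c/2>0$.

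Next I would classify every compact $K'\subseteq K$ with at most one limit point. If $K'$ is finite, then $h(T,K')=0$. If $K'$ has unique limit point $x_0$, the additional conclusion of Lemma \ref{sum} (valid since $x_0\notin\Psi_n$) gives $h(T,K')\le h(T,\{x_i\}_0^\infty)=0$. If $K'$ has unique limit point $p\neq x_0$, then since $\Psi_n\to\{x_0\}$ only finitely many blocks meet a fixed neighbourhood of $p$ missing $x_0$, so up to a finite set $K'\subseteq\bigcup_{n\in F}\Psi_n$ for some finite $F$, whence $h(T,K')\le\max_{n\in F}h(T,\Psi_n)$. Thus every admissible $K'$ has entropy either $0$ or bounded by the entropy of a single block.

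The decisive step is to choose the blocks so that the supremum $S=\sup_n h(T,\Psi_n)$ is \emph{not attained} along any finite subfamily; concretely I would arrange $h(T,\Psi_n)$ to be strictly increasing to $S$. Granting this, the three cases give $h(T,K')<S=h(T,K)$ for every compact $K'\subseteq K$ with at most one limit point, so the value $h:=S\in(0,h(T,K)]$ is realised by no such subset, contradicting \HUL. I expect this last step to be the main obstacle: the constraint $\epsilon_n\to 0$ (forced by the Hausdorff collapse and \eqref{diameter}) pins the block entropies beneath $h_T^*(\epsilon_n)\to c$, so they accumulate at $c$, and producing a strictly increasing sequence with unattained supremum requires realising blocks---or sub-blocks inside the $\Psi_n$, which inherit the forward contraction---of prescribed genuine entropy. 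Here Lemma \ref{good} controls only the scale-$\delta$ quantity $s(d,T,\delta,\cdot)$ rather than $h(T,\cdot)$, the gap that Lemma \ref{huang} closes solely under expansiveness, so the careful selection of the $\Psi_n$, exploiting that $h_T^*(\epsilon)\ge c$ lets one realise block entropies arbitrarily close to $c$, is exactly where the real work lies.
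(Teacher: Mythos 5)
Your plan has the right skeleton --- the assembled set $\{x_0\}\cup\bigcup_n\Psi_n$, the appeal to Lemma \ref{sum} via the forward contraction of the $\Phi_\epsilon$-blocks, Theorem \ref{zero} to kill the skeleton's entropy, and the three-way classification of compact subsets with at most one limit point is exactly the mechanism of the paper's contradiction --- but it has one outright false step and one admitted hole, and both are precisely where the paper does its real work. The false step is ``by discarding finitely many blocks I arrange $x_0\notin\Psi_n$.'' Nothing forces $x_0\notin\Phi_{\epsilon_n}(z_n)$ for large $n$: the entropy of the system may concentrate at the limit point itself (take $z_n=x_0$ for all $n$, so $x_0\in\Psi_n$ always), and then the ``in addition'' clause of Lemma \ref{sum}, which your second classification case relies on, is unavailable. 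This is exactly Case 2 of the paper's proof, and it is not removable by a subsequence trick: when $x\in\Phi_{\epsilon_i}(x_i)$ eventually, the paper must either translate along an infinite orbit toward a point $y\in\omega(x,T)$ to recreate the Case-1 geometry, or, when $x$ is periodic, invoke the variational principle on the invariant sets $Y_n=\bigcup_{i=0}^{l-1}\Phi_{1/n}(T^ix)$ together with Katok's theorem to manufacture compact sets $K_n\subseteq\Phi_{1/n}(x)$ with $x\notin K_n$ and $h(T,K_n)$ large. Your proposal contains no mechanism for this scenario.

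The second gap is the one you flag yourself: producing blocks with prescribed, strictly increasing entropies whose supremum is unattained. You correctly observe that Lemma \ref{good} only controls $s(d,T,\delta,\cdot)$ and that Lemma \ref{huang} upgrades this to $h(T,\cdot)$ only under expansiveness, so in a general TDS you cannot prescribe $h(T,\Psi_n)$ --- and your contrapositive framing makes this unfixable, because you have discarded the one hypothesis that does the job. The paper argues by \emph{contradiction}, so \HUL\ is available as a standing assumption, and it is used twice: first, applied inside each $\Phi_{\epsilon_i}(x_i)$, it produces countable compact $X_i$ with a unique limit point and \emph{exact} entropy $a_i<a$, $a_i\to a$ (this replaces your unresolved ``decisive step''); second, applied to the assembled set $\bigcup_i X_i\cup\{x\}$ of entropy $a$ at the level $h=a$, it yields a one-limit-point compact $A$ with $h(T,A)=a$, which your classification argument (limit point at $x$ gives entropy $0$ via Lemma \ref{sum}; limit point elsewhere traps $A$, up to a finite set, in finitely many blocks of entropy $<a$) then shows cannot exist. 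So the repair is structural, not cosmetic: keep your classification, but run the proof as a contradiction so that \HUL\ itself supplies the exact-entropy blocks, and add the Case-2 analysis for when the limit point cannot be separated from the blocks.
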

\begin{proof}
Let $(X, T)$ be a \HUL\ TDS with metric $d$. Assume the contrary
that $(X, T)$ is not asymptotically $h$-expansive, i.e. $a= h^*(T)=
\lim_{\ep\ra 0+} h_T^*(\epsilon)> 0$. Then there exist a sequence
$\{x_i\}_{i\in \mathbb{N}}\subseteq X$ with limit $x$ and a sequence
$\{\epsilon_i\}_{i\in \mathbb{N}}$ of positive numbers with limit
$0$ such that $\lim_{i\rightarrow +\infty} h (T, \Phi_{\epsilon_i}
(x_i))= a$. There are two cases.

\medskip

\noindent{\bf Case 1.} There exist infinitely many $i\in \mathbb{N}$
such that for which $x\not \in \Phi_{\epsilon_i} (x_i)$. Thus w.l.g.
we assume $x\not \in \Phi_{\epsilon_i}(x_i)$ for each $i\in
\mathbb{N}$.

Since $(X, T)$ is \HUL, for each $i\in \mathbb{N}$ we can take a
countable infinite compact subset $X_i\subseteq \Phi_{\epsilon_i}
(x_i)$ with a unique limit point $y_i$ such that $a_i\doteq h (T,
X_i)< a$ and $\lim_{i\rightarrow +\infty} a_i= a$. Clearly,
$\lim_{i\rightarrow +\infty} y_i= x$. Moreover, by Theorem
\ref{zero} we may assume that $h (T, \{x, y_1, y_2, \cdots\})= 0$
(if necessary we take a sub-sequence).

By Lemma \ref{sum}, $h (T, \bigcup_{i\in \mathbb{N}} X_i\cup
\{x\})=\max\left\{\sup_{i\in \mathbb{N}} h(T, X_i), h(T,
\{x,y_1,y_2,\cdots\})\right\}$ $= a$, so there exists a countable
infinite compact subset $A\subseteq \bigcup_{i\in \mathbb{N}}
X_i\cup \{x\}$ with a unique limit point $z$ such that $h (T, A)= a>
0$. By assumptions, if $z= x$ then each $A_i\doteq A\cap X_i$ is a
finite subset, which implies
\begin{eqnarray*}
& &h (T, A)\le  h (T, A\cup \{y_1, y_2, \cdots\}) \\
&= & \max \left\{\sup_{i\in \mathbb{N}} h (T, A_i\cup \{y_i\}), h
(T, \{x, y_1, y_2, \cdots\})\right\}\ (\text{using Lemma
\ref{sum}})= 0,
\end{eqnarray*}
a contradiction with $h (T, A)= a> 0$. Thus $z\not =x$, and so $z\in
X_{v}$ for some $v\in \mathbb{N}$. Put $r= \frac{d (z, x)}{2}$, then
$r> 0$ follows from $x\notin \Phi_{\epsilon_v} (x_v)$. Since $\{x\}$
is the limit of $\{X_i\}_{i\in \mathbb{N}}$ (in the sense of
Hausdorff metric $H_d$), there exists $L\in \mathbb{N}$ such that if
$i> L$ then $d (z, X_i)\ge d (z, x)- H_d (X_i, \{x\})> r$. Thus
$(\bigcup_{i> L} X_i\cup \{x\}) \cap A$ is a finite set, as $z$ is
the unique limit point of $A$ and $d(z,(\bigcup_{i>L} X_i\cup \{x
\}) \cap A)\ge r$. Therefore $$h (T, A)= h \left(T, A\cap
\bigcup_{1\le i\le L} X_i\right)\le \max_{1\le i\le L} h (T, X_i)=
\max_{1\le i\le L} a_i< a,$$ a contradiction.

\medskip

\noindent{\bf Case 2.} $x\in \Phi_{\epsilon_i}(x_i)$ for each large
enough $i\in \mathbb{N}$. W.l.g. we assume that for each $i\in
\mathbb{N}$, $x\in \Phi_{\epsilon_i} (x_i)$ and so
$\Phi_{\epsilon_i} (x_i)\subseteq \Phi_{2\epsilon_i} (x)$. So
$\lim_{\epsilon\rightarrow 0 +} h (T, \Phi_{\epsilon} (x))= a$. As
$h (T, \Phi_{\epsilon} (T^k x))\ge h (T, \Phi_{\epsilon} (x))$ for
each $k\in \mathbb{N}$ and $\epsilon> 0$, then
$\lim_{\epsilon\rightarrow 0 +} h (T, \Phi_{\epsilon} (T^k x))= a$
for each $k\in \mathbb{N}$.

If $\{x, T x, T^2 x, \cdots\}$ is infinite, we fix a point $y\in
\omega (x, T)\doteq \bigcap_{n\in \mathbb{N}} \overline{\{T^j x:
j\ge n\}}$. Then for each $i\in \mathbb{N}$ there exists $k_i\in
\mathbb{N}$ such that $0< d (y, T^{k_i} x)< \frac{1}{i}$. For each
$i\in \mathbb{N}$, as $\lim_{\epsilon\rightarrow 0+} h (T,
\Phi_{\epsilon} (T^{k_i} x))= a$, we may take $0< \eta_i< d (y,
T^{k_i} x)$ such that $h (T, \Phi_{\eta_i} (T^{k_i} x))> \min \{a
(1- \frac{1}{i}), i\}$. Let $y_i= T^{k_i} x$, then
$\lim_{i\rightarrow +\infty} y_i= y$ and $\lim_{i\rightarrow
+\infty} h (T, \phi_{\eta_i} (y_i))= a$ and $y\notin \Phi_{\eta_i}
(y_i)$ for each $i\in \mathbb{N}$. By a similar proof to Case 1, it
is impossible. Hence, $x$ must have a finite orbit, we may assume
that $x$ is a periodic point (if necessary we replace $x$ by $T^k x$
for some $k\in \mathbb{N}$).

Let $l\in \mathbb{N}$ be the period of $x$. Since $T
(\Phi_{\epsilon} (T^k x))\subseteq \Phi_{\epsilon} (T^{k+ 1} x)$ for
each $k\in \mathbb{N}$ and $\epsilon>0$, $\bigcup_{i= 0}^{l- 1}
\Phi_\epsilon (T^i x)$ is compact and $T$-invariant (i.e. $T
(\bigcup_{i= 0}^{l- 1} \Phi_\epsilon (T^i x))\subseteq \bigcup_{i=
0}^{l- 1} \Phi_\epsilon (T^i x)$) for any $\epsilon> 0$. For each
$n\in \mathbb{N}$, let $Y_n= \bigcup_{i=0}^{l- 1} \Phi_{\frac{1}{n}}
(T^i x)$. Then $(Y_n, T)$ is a sub-system of $(X,T)$ and
$h_{\text{top}}(T, Y_n)\ge h (T, \Phi_{\frac{1}{n}}(x))\ge a$. By
the variational principle, there exists $\mu_n\in \mathcal{M}^e
(Y_n, T)$ such that $h_{\mu_n} (T, Y_n)>\min
\{a(1-\frac{1}{n}),n\}$. Obviously, $\mu_n(\{x\})=0$ and
$\mu_n(\Phi_{\frac{1}{n}}(x))\ge \frac{1}{l}$. Thus, we can take a
compact subset $K_n\subseteq \Phi_{\frac{1}{n}}(x)$ such that
$\mu_n(K_n)>0$ and $x\notin K_n$. By a classic result of Katok
\cite[Theorem 1.1]{Ka} (see also \cite[Theorem 3.7]{YZ}), we know
that
$$h(T, K_n)\ge h_{\mu_n}(T, Y_n)>\min \left\{a\left(1-\frac{1}{n}\right), n\right\},$$ as
$\mu_n (K_n)>0.$ As $(X, T)$ is \HUL, there is a countable compact
subset $X_n\subseteq K_n$ with a unique limit point $y_n$ such that
$h (T, X_n)= a_n= \min \{a(1-\frac{1}{n}), n\}< a$. Clearly,
$x\notin X_n$ for each $n\in \mathbb{N}$, as $x\notin K_n$. Again by
a similar proof to Case 1, we know that this is impossible. Thus,
$(X, T)$ must be asymptotically $h$-expansive.
\end{proof}

\section{Properties preserved by a principal extension}

Combined with the results obtained in sections 5 and 6, in this
section we shall answer question \ref{q 3} by proving that a TDS is
\HUL\ iff it is asymptotically $h$-expansive. Moreover, we present a
hereditarily lowerable TDS with finite entropy which is not \HUL. As
a byproduct, we show that principal extension preserves the
properties of lowering, hereditary lowering and \HUL.

\medskip

Let $(X, T)$ and $(Y, S)$ be GTDSs. We say that $\pi: (X,
T)\rightarrow (Y, S)$ is a {\it factor map} if $\pi$ is a continuous
surjective map and $\pi\circ T= S\circ \pi$. Let $\pi: (X,
T)\rightarrow (Y, S)$ be a factor map between GTDSs.  The {\it
relative topological entropy of $(X, T)$ w.r.t. $\pi$} is defined as
follows:
\begin{equation*}
h_{\text{top}} (T, X| \pi)= \sup_{y\in Y} h(T,\pi^{-1}(y)).
\end{equation*}

Let $\pi:(X,T)\rightarrow (Y,S)$ be a factor map between GTDSs.
It's easy to check that on $Y$ the function $y\mapsto h (T,
\pi^{-1} (y))$ is $S$-invariant and Borel measurable. Thus for
each $\nu\in \mathcal{M} (Y,S)$ we may define
\begin{equation*}
h (T, X| \nu)= \int_{Y} h (T, \pi^{-1} (y)) d \nu (y).
\end{equation*}
In particular, if $\nu$ is ergodic then $h (T, \pi^{-1} (y))= h (T,
X| \nu)$ for $\nu$-a.e $y\in Y$. Thus

\begin{prop}
Let $\pi: (X, T)\rightarrow (Y, S)$ be a factor map between GTDSs.
Then for each $\nu\in \mathcal{M}^e (Y,S)$ there exists a
countable compact set $K\in 2^X$ with $h (T, K)= h (T, X| \nu)$.
\end{prop}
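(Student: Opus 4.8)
The plan is to reduce the statement to a single fiber and then invoke the Ye--Zhang selection theorem. First I would use the observation recorded immediately before the statement: since $\nu$ is ergodic and the function $y\mapsto h(T,\pi^{-1}(y))$ is $S$-invariant and Borel measurable, it is $\nu$-a.e. equal to its own integral, namely $h(T,X|\nu)$. Consequently there is a point $y_0\in Y$ (in fact $\nu$-a.e. $y_0$ will do) for which $h(T,\pi^{-1}(y_0))=h(T,X|\nu)$, and it suffices to fix one such $y_0$.

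Next I would verify that $E:=\pi^{-1}(y_0)$ is a legitimate element of $2^X$. It is non-empty because $\pi$ is surjective, and it is closed, hence compact, because $\pi$ is continuous and the singleton $\{y_0\}$ is closed in $Y$. Thus $E\in 2^X$ and, by the choice of $y_0$, $h(T,E)=h(T,X|\nu)$.

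Finally I would apply Theorem \ref{ye-zhang}(3) to the compact set $E$: it yields a countable compact subset $K\subseteq E$ with $h(T,K)=h(T,E)=h(T,X|\nu)$, which is exactly the set required. Note that this covers the degenerate case $h(T,X|\nu)=0$ as well, since then $K$ may simply be taken to be a single point of $E$; no separate argument is needed. Although Theorem \ref{ye-zhang} is phrased for a homeomorphism, the remark following Theorem \ref{expansive} records that the results of that section persist for a GTDS, so its application in the present (merely continuous) setting is justified.

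The main point requiring care here is not a genuine obstacle but a matter of bookkeeping: confirming that the $\nu$-a.e.\ constancy argument applies verbatim and that the chosen fiber sits in $2^X$. All the substantive work---producing a countable compact subset whose entropy equals that of a given compact set---is already packaged in Theorem \ref{ye-zhang}, so once the fiber $\pi^{-1}(y_0)$ is identified the conclusion follows immediately.
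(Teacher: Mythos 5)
Your proposal is correct and follows essentially the same route as the paper: the paper states this proposition as an immediate consequence ("Thus") of the preceding observation that for ergodic $\nu$ the $S$-invariant function $y\mapsto h(T,\pi^{-1}(y))$ equals $h(T,X|\nu)$ for $\nu$-a.e.\ $y$, combined with the Ye--Zhang result (Theorem 5.1(3), originally \cite[Remark 5.13]{YZ}) extracting a countable compact subset of the fiber $\pi^{-1}(y_0)$ with the same entropy. Your additional bookkeeping (non-emptiness and compactness of the fiber, the GTDS remark, the trivial case $h(T,X|\nu)=0$) is accurate and only makes explicit what the paper leaves implicit.
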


Now let $\pi:(X,T)\rightarrow (Y,S)$ be a factor map between GTDSs
and $\mu\in \mathcal{M} (X, T)$, note that the sub-$\sigma$-algebra
$\pi^{-1} (\mathcal{B}_Y)\subseteq \mathcal{B}_X$ satisfies
$T^{-1}\pi^{-1} (\mathcal{B}_Y)\subseteq \pi^{-1} (\mathcal{B}_Y)$
in the sense of $\mu$, we define {\it relative measure-theoretical
$\mu$-entropy of $(X, T)$ w.r.t. $\pi$} as
\begin{equation*}
 h_\mu (T, X| \pi)= h_\mu (T, X| \pi^{- 1} (\mathcal{B}_Y)).
\end{equation*}

The following results are proved in \cite{DS} and \cite{LW1}.

\begin{lem} \label{BD22}
Let $\pi: (X,T)\rightarrow (Y,S)$ be a factor map between GTDSs.
Then
\begin{enumerate}

\item One has
\begin{equation*}
\hskip 26pt h_{\text{top}} (T,X|\pi)= \sup_{\nu\in
\mathcal{M}(Y,S)} h (T, X| \nu)= \sup_{\nu\in \mathcal{M}^e (Y,S)}
h (T, X| \nu).
\end{equation*}

\item For each $\nu\in \mathcal{M} (Y,S)$, $$h (T, X| \nu)=
\sup \{h_\mu (T, X| \pi): \mu\in \mathcal{M} (X, T), \pi\mu=
\nu\}.$$

\item For each $\mu\in \mathcal{M}(X,T)$, $h_\mu (T, X)=
h_\mu (T, X| \pi)+ h_{\pi\mu} (S, Y)$.
\end{enumerate}
\end{lem}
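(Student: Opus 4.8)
The plan is to prove (3) first, since it is the algebraic engine behind the other two parts, then to obtain (2) by feeding (3) into the Ledrappier--Walters relative variational principle, and finally to address (1), whose real content is a conditional variational principle that I would import from \cite{DS}.

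For (3) I would establish the Abramov--Rokhlin addition formula for the $T$-invariant sub-$\sigma$-algebra $\mathcal{C}=\pi^{-1}(\mathcal{B}_Y)$, which corresponds to the factor $(Y,S,\pi\mu)$. Fixing $\mu\in\mathcal{M}(X,T)$, a finite $\alpha\in\mathcal{P}_X$ and a finite $\beta\in\mathcal{P}_Y$, the chain rule for static conditional entropy gives, for every $n$,
\begin{equation*}
H_\mu\!\left(\alpha_0^{n-1}\vee(\pi^{-1}\beta)_0^{n-1}\right)=H_\mu\!\left(\alpha_0^{n-1}\,\big|\,(\pi^{-1}\beta)_0^{n-1}\right)+H_{\pi\mu}\!\left(\beta_0^{n-1}\right),
\end{equation*}
where the last term uses $H_\mu((\pi^{-1}\beta)_0^{n-1})=H_{\pi\mu}(\beta_0^{n-1})$. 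Dividing by $n$ and letting $n\to+\infty$, the left side tends to $h_\mu(T,\alpha\vee\pi^{-1}\beta)$ and the last term to $h_{\pi\mu}(S,\beta)$, while the middle term tends to the relative entropy $h_\mu(T,\alpha|\mathcal{C})$ once $\beta$ is refined so that $(\pi^{-1}\beta)^T$ generates $\mathcal{C}$. Taking suprema over $\alpha$ and $\beta$ then produces $h_\mu(T,X)$ on the left and $h_\mu(T,X|\pi)+h_{\pi\mu}(S,Y)$ on the right. The one delicate point is the interchange of the $n\to+\infty$ limit with the refinement of the conditioning $\sigma$-algebra, which is handled by the standard martingale convergence of the conditional information functions together with the u.s.c.\ estimates recorded in Section 2 (see also \cite{G}).

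Granting (3), part (2) is almost formal. The Ledrappier--Walters relative variational principle of \cite{LW1} states that for each $\nu\in\mathcal{M}(Y,S)$,
\begin{equation*}
\sup_{\pi\mu=\nu}h_\mu(T,X)=h_\nu(S,Y)+\int_Y h(T,\pi^{-1}(y))\,d\nu(y)=h_\nu(S,Y)+h(T,X|\nu).
\end{equation*}
By (3), every $\mu$ with $\pi\mu=\nu$ satisfies $h_\mu(T,X|\pi)=h_\mu(T,X)-h_\nu(S,Y)$, so taking the supremum and subtracting $h_\nu(S,Y)$ yields $\sup_{\pi\mu=\nu}h_\mu(T,X|\pi)=h(T,X|\nu)$; this cancellation is legitimate when $h_\nu(S,Y)<+\infty$, the infinite base-entropy case being degenerate and treated separately. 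For (1), the second equality $\sup_{\mathcal{M}(Y,S)}=\sup_{\mathcal{M}^e(Y,S)}$ is the easy half: writing the ergodic decomposition $\nu=\int_\Omega\nu_\omega\,d\tau(\omega)$ and using the $S$-invariance and Borel measurability of $y\mapsto h(T,\pi^{-1}(y))$ noted just before the lemma, one has $\int_Y h(T,\pi^{-1}(y))\,d\nu=\int_\Omega\big(\int_Y h(T,\pi^{-1}(y))\,d\nu_\omega\big)\,d\tau(\omega)\le\sup_\omega\int_Y h(T,\pi^{-1}(y))\,d\nu_\omega$, so the supremum over all invariant measures is already attained in the limit along ergodic ones.

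The main obstacle is the first equality of (1), that is, the conditional variational principle $\sup_y h(T,\pi^{-1}(y))=\sup_{\nu\in\mathcal{M}(Y,S)}\int_Y h(T,\pi^{-1}(y))\,d\nu(y)$, which I would take from \cite{DS}. The inequality ``$\ge$'' is immediate, since every integrand is bounded by the pointwise supremum. The reverse inequality ``$\le$'' is the hard direction: the fiber-entropy function $y\mapsto h(T,\pi^{-1}(y))$ is only Borel and in general fails to be upper semicontinuous, so one cannot simply take weak$^*$ limits of point masses along a sequence $y_k$ with $h(T,\pi^{-1}(y_k))\to\sup_y h(T,\pi^{-1}(y))$ and expect the fiber entropy to survive in the limiting invariant measure. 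Overcoming this requires a genuine construction, in the spirit of the relative Katok argument already invoked in the proof of Theorem \ref{haech}: from fibers carrying many $(n,\epsilon)$-separated points one builds, by spreading this separated mass along orbits, an $S$-invariant measure $\nu$ whose fiber entropy $\int_Y h(T,\pi^{-1}(y))\,d\nu$ comes within a prescribed error of $\sup_y h(T,\pi^{-1}(y))$. This is exactly the content established in \cite{DS}, so I would invoke it directly rather than reprove it.
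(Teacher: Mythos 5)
The paper offers no argument of its own for this lemma --- it is stated with the remark that the results are proved in \cite{DS} and \cite{LW1} --- so your proposal should be measured against those sources, and in substance it is the standard assembly of them. Your sketch of (3) is the Abramov--Rokhlin addition formula and is sound; the interchange you flag is handled exactly as you indicate: for fixed $n$, martingale convergence gives $\inf_\beta H_\mu\bigl(\alpha_0^{n-1}\mid(\pi^{-1}\beta)_0^{n-1}\bigr)=H_\mu\bigl(\alpha_0^{n-1}\mid\pi^{-1}\mathcal{B}_Y\bigr)$ along a refining sequence of $\beta$'s (a single finite $\beta$ never generates $\mathcal{B}_Y$, so the inf is genuinely needed), and since both sides are subadditive in $n$ the limits in $n$ are infima, so the two infima commute by Fekete's lemma. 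Your ergodic-decomposition argument for the second equality in (1) is correct --- the Borel measurability and $S$-invariance of $y\mapsto h(T,\pi^{-1}(y))$ noted just before the lemma make $\omega\mapsto\int_Y h(T,\pi^{-1}(y))\,d\nu_\omega(y)$ measurable and the averaging argument legitimate --- and deferring the hard direction of (1) to \cite{DS} is faithful to what the paper itself does.

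The one genuine gap is your treatment of (2) when $h_\nu(S,Y)=+\infty$; this case is not degenerate, and it is not vacuous for this paper. When $h_\nu(S,Y)=+\infty$, part (3) forces $h_\mu(T,X)=+\infty$ for every $\mu$ over $\nu$, so the Ledrappier--Walters identity reads $+\infty=+\infty$ and carries no information about the conditional quantity, while the cancellation you perform is of the form $\infty-\infty$. The lemma, however, is stated for arbitrary GTDSs and is applied in the Appendix to the natural extension of an arbitrary surjective GTDS --- possibly of infinite entropy, as with systems like $([0,1]^{\mathbb{Z}},\sigma)$ discussed in the paper --- where precisely the ``$\le$'' half of (2) (zero fiber entropies imply $h_\mu(T,X|\pi)=0$ for all $\mu$, hence the extension is principal) is load-bearing. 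The repair is to establish the two halves of (2) in conditional form directly rather than by subtraction: the upper bound $h_\mu(T,X|\pi)\le\int_Y h(T,\pi^{-1}(y))\,d\nu(y)$ holds unconditionally and is exactly the relative estimate contained in the proof in \cite{LW1} (and stated in conditional form in \cite{DS}), and the construction of measures $\mu$ over $\nu$ with $h_\mu(T,X|\pi)$ arbitrarily close to $\int_Y h(T,\pi^{-1}(y))\,d\nu(y)$ likewise survives infinite base entropy. In short, cite \cite{DS} for (2) in full, as you already do for the hard direction of (1); the subtraction route proves (2) only under a finiteness hypothesis that the lemma does not impose.
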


We have proved that each expansive TDS is \HUL. In fact, the same
conclusion holds for a more general case. To prove this, first we
shall prove the following Bowen's type theorem which is interesting
itself. We remark that the idea of the proof is inspired by the
proof of \cite[Theorem 17]{B}.

\begin{thm} \label{Fe}
Let $\pi:(X,T)\rightarrow (Y,S)$ be a factor map between GTDSs and
$E\in 2^X$. Then
\begin{equation*}
h (S, \pi (E))\le h (T, E)\le h (S, \pi (E))+ h_{\text{top}} (T, X|
\pi).
\end{equation*}
In particular, if $(Y,S)$ has finite entropy then for each $E\in
2^X$, $$h(T,E)-h(S,\pi(E))\le h_{\text{top}}(T,X|\pi).$$
\end{thm}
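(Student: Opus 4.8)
The plan is to prove the two displayed inequalities separately; the lower bound is soft, while the upper bound is a Bowen-style concatenation estimate. For the lower bound $h(S,\pi(E))\le h(T,E)$ I would work with pullbacks of open covers. Since $\pi\circ T=S\circ\pi$, for every $\mathcal V\in\mathcal C^o_Y$ one has $\pi^{-1}(\mathcal V_0^{n-1})=(\pi^{-1}\mathcal V)_0^{n-1}$, and the elements of $(\pi^{-1}\mathcal V)_0^{n-1}$ are precisely the $\pi$-preimages of the elements of $\mathcal V_0^{n-1}$. A subfamily of $\mathcal V_0^{n-1}$ covers $\pi(E)$ if and only if the corresponding preimages cover $E$ (using $E\subseteq\pi^{-1}\pi(E)$ for one direction and $\pi(\pi^{-1}W)\subseteq W$ for the other), so $N((\pi^{-1}\mathcal V)_0^{n-1},E)=N(\mathcal V_0^{n-1},\pi(E))$ and hence $h_{\pi^{-1}\mathcal V}(T,E)=h_{\mathcal V}(S,\pi(E))$. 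As $\pi^{-1}\mathcal V\in\mathcal C^o_X$, taking the supremum over $\mathcal V\in\mathcal C^o_Y$ gives $h(S,\pi(E))=\sup_{\mathcal V}h_{\pi^{-1}\mathcal V}(T,E)\le\sup_{\mathcal U\in\mathcal C^o_X}h_{\mathcal U}(T,E)=h(T,E)$.

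For the upper bound I would pass to Bowen spanning sets, using $h(T,E)=\lim_{\epsilon\to0+}r(d,T,\epsilon,E)$ and the fact that at any fixed scale the spanning rate never exceeds the entropy. Fix $\epsilon>0$ and $\eta>0$ and set $a=h_{\text{top}}(T,X|\pi)=\sup_{y}h(T,\pi^{-1}(y))$. The first step is a compactness lemma producing uniform fiber data: for each $y\in Y$ choose $m_y\in\mathbb N$ and an $(m_y,\epsilon)$-spanning set $F_y$ of the compact fiber $\pi^{-1}(y)$ with $|F_y|\le e^{m_y(a+\eta)}$; since the $d_{m_y}$-open $\epsilon$-neighborhood of $F_y$ is open and contains $\pi^{-1}(y)=\bigcap_{W\ni y}\pi^{-1}(\overline W)$, there is an open $W_y\ni y$ for which $F_y$ already $(m_y,2\epsilon)$-spans $\pi^{-1}(\overline{W_y})$ w.r.t. $T$. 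I would then extract a finite subcover $W_{y_1},\dots,W_{y_k}$ of $Y$, with Lebesgue number $\delta>0$ (for a metric $\rho$ on $Y$) and block lengths $m_1,\dots,m_k$.

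The second step combines the fiber data with a base spanning set. Given $x\in E$, I would read off the itinerary of $\pi(x)$ through $\{W_{y_j}\}$ along its $S$-orbit, cutting $[0,n)$ into consecutive blocks whose lengths are drawn from $\{m_1,\dots,m_k\}$ according to which $W_{y_j}$ the current base point lies in; on a block of type $j$ the set $F_{y_j}$ supplies a point $(m_j,2\epsilon)$-tracking the $T$-orbit of $x$, and concatenating these tracks $x$ on all of $[0,n)$. The total number of fiber choices is at most $\prod_\ell e^{m_{j_\ell}(a+\eta)}=e^{(a+\eta)\sum_\ell m_{j_\ell}}\le e^{(a+\eta)n}$ up to a bounded remainder factor, while the number of admissible itineraries is controlled by an $(n,\delta)$-spanning set of $\pi(E)$ in $(Y,S)$, whose growth rate is at most $h(S,\pi(E))$. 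This yields $r_n(d,T,2\epsilon,E)\le r_n(\rho,S,\delta,\pi(E))\,e^{(a+\eta)n}C_n$ with $C_n$ subexponential, hence $r(d,T,2\epsilon,E)\le h(S,\pi(E))+a+\eta$, and letting $\eta\to0$ then $\epsilon\to0$ gives $h(T,E)\le h(S,\pi(E))+a$. The main obstacle is exactly this bookkeeping: making $x\mapsto\text{itinerary}$ genuinely controlled by a base spanning set, since membership in the open $W_{y_j}$ is not constant on $\delta$-Bowen balls and the block lengths vary from block to block. I would resolve it in Bowen's manner, choosing $\delta$ small against the Lebesgue number and fixing a deterministic selection rule for block types, so that the itinerary and its induced block partition become a function of the base orbit up to a subexponential ambiguity absorbed into $C_n$; the variable lengths are harmless because the fiber exponents only sum to the covered length, which is at most $n$.

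Finally, the displayed ``in particular'' is immediate: when $h_{\text{top}}(S,Y)<\infty$ we have $h(S,\pi(E))\le h_{\text{top}}(S,Y)<\infty$, so the upper bound may be rearranged to $h(T,E)-h(S,\pi(E))\le h_{\text{top}}(T,X|\pi)$ with no indeterminate difference.
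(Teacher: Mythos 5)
Your proposal is correct and follows essentially the same route as the paper's proof: fiber $(m_y,\epsilon)$-spanning sets of cardinality at most $e^{m_y(a+\eta)}$, upper semicontinuity of $y\mapsto\pi^{-1}(y)$ to obtain open neighborhoods $W_y$ on which $(m_y,2\epsilon)$-spanning persists, a finite subcover with Lebesgue number $\delta$, and a concatenation count of at most $e^{(a+\eta)(n+M)}$ tubes indexed by an $(n,\delta)$-spanning set of $\pi(E)$. The bookkeeping obstacle you flag is resolved in the paper exactly as you suggest --- the block types $c_j(y)$ are chosen deterministically from the spanning point $y$ via $\overline{B(S^j y,\delta)}\subseteq W_{c_j(y)}$, so the itinerary is a function of $y$ alone and no subexponential ambiguity even arises (the paper then counts against a $4\epsilon$-separated set of $E$ rather than a spanning set, an immaterial variant) --- while your pullback-cover argument correctly supplies the easy inequality that the paper dispatches with ``by the continuity of $\pi$.''
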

\begin{proof}
By the continuity of $\pi$, it's easy to obtain $h(S,\pi(E))\le
h(T,E)$. Thus it remains to prove $h(T,E)\le h(S,\pi(E))+
h_{\text{top}} (T,X|\pi).$ If $h_{\text{top}} (T,X|\pi)=+\infty$,
this is obvious. Now we suppose that $a= h_{\text{top}} (T,X|\pi)<
+\infty$. Let $d_X$ and $d_Y$ be the metrics on $(X, T)$ and $(Y,
S)$, respectively.

Let $\epsilon>0$ and $\alpha>0$. By Lemma \ref{BD22} (1), for each
$y\in Y$ we may choose $m(y)\in \mathbb{N}$ such that
\begin{equation} \label{selection}
a+ \alpha\ge h (T, \pi^{-1} (y))+ \alpha\ge \frac{1}{m(y)} \log
r_{m(y)} (d_X, T, \epsilon, \pi^{-1} (y)).
\end{equation}
Let $E_y$ be an $(m(y),\epsilon)$-spanning set of $\pi^{-1}(y)$
w.r.t. $T$ with the minimum cardinality. Then $U_y\doteq
\bigcup_{z\in E_y} B_{m(y)}(z,2\epsilon)$ is an open neighborhood of
$\pi^{-1}(y)$, where $B_{m(y)}(z,2\epsilon)$ denotes the open ball
in $X$ with center $z$ and radius $2\epsilon$ (in the sense of
$(d_X)_{m (y)}$-metric). Since the map $\pi^{-1}: Y\rightarrow 2^X$,
$y\mapsto \pi^{-1}(y)$ is upper semi-continuous, there exists an
open neighborhood $W_y$ of $y$ for which $\pi^{-1}(W_y)\subseteq
U_y$. By the compactness of $Y$ there exist $\{y_1, \cdots,
y_k\}\subseteq Y$ such that $\mathcal{W}\doteq
\{W_{y_1},\cdots,W_{y_k}\}$ forms an open cover of $Y$. Let $\delta>
0$ be a Lebesgue number of $\mathcal{W}$ and $M=\max
\{m(y_1),\cdots,m(y_k) \}$.

Let $\pi(E)_n$ be any $(n,\delta)$-spanning set for $\pi(E)$ w.r.t.
$S$ with the minimum cardinality. For each $y\in \pi(E)_n$ and $0\le
j<n$, pick $c_j(y)\in \{y_1,\cdots,y_k\}$ with $\overline{B
(S^{j}(y), \delta)}\subseteq W_{c_j(y)}$, where $B (S^{j}(y),
\delta)$ denotes the open ball in $Y$ with center $S^{j}(y)$ and
radius $\delta$. Now define recursively $t_0(y)=0$ and
$t_{s+1}(y)=t_s(y)+m(c_{t_s(y)}(y))$ ($s\in \mathbb{Z}_+$) until one
gets a $t_{q+1}(y)\ge n$; set $q(y)=q\le t_q (y)$.

For any $y\in \pi(E)_n$ and $z_0\in E_{c_{t_0(y)}(y)}, z_1\in
E_{c_{t_1(y)}(y)},\cdots, z_{q (y)}\in E_{c_{t_{q(y)} (y)}(y)}$ we
define
\begin{equation*}
V(y;z_0,z_1,\cdots,z_{q(y)})=\{u\in X: (d_X)_{m(c_{t_s(y)}(y))}
(T^{t_s(y)}(u),z_s)<2\epsilon, 0\le s\le q(y)\}.
\end{equation*}
Obviously, for each $y\in \pi(E)_n$, the number of permissible
tuples
 $(z_0,z_1,\cdots,z_{q(y)})$ is
\begin{equation} \label{estimate 4}
N_{y}=\prod_{s=0}^{q(y)}
 r_{m(c_{t_s(y)}(y))}(d_X, T,\epsilon,\pi^{-1}(c_{t_s(y)}(y))).
\end{equation}
Then we have (using \eqref{selection} and \eqref{estimate 4})
\begin{equation} \label{estimate 5}
N_y\le \prod_{s=0}^{q(y)} e^{(a+ \alpha) m (c_{t_s(y)}(y))}= e^{(a+
\alpha) (t_{q (y)} (y)+ m (c_{t_{q (y)} (y)}(y)))}\le
e^{(a+\alpha)(n+M)}.
\end{equation}
Note that if $F$ is an $(n,4\epsilon)$-separated subset of $E$
w.r.t. $T$ then, for each permissible tuple $(z_0, z_1, \cdots,
z_{q(y)})$, $V(y; z_0,z_1,\cdots,z_{q(y)})\cap F$ has at most one
element, and
\begin{equation*}
\bigcup_{y\in \pi(E)_n} \left(\bigcup_{z_s\in E_{c_{t_i (y)} (y)},
0\le s\le q (y)} V(y; z_0,z_1,\cdots,z_{q(y)})\right)\supseteq E.
\end{equation*}
Thus combining \eqref{estimate 5} we have
\begin{equation*}
s_n (d_X, T, 4\epsilon, E)\le \sum_{y\in \pi(E)_n} N_y\le r_n
(d_Y, S, \delta, \pi (E)) e^{(a+\alpha)(n+M)}.
\end{equation*}
Letting $n\rightarrow +\infty$ one has $s (d_X, T,4\epsilon,E)\le
r(d_Y, S,\delta,\pi(E))+a+\alpha\le h(S,\pi(E))+a+\alpha$. Since
$\epsilon>0$ and $\alpha>0$ are arbitrary, we obtain $h (T,E)\le
h(S,\pi(E))+a$. This finishes the proof.
\end{proof}

As a direct consequence of Theorem \ref{Fe}, we have the following
proposition.

\begin{prop} \label{Fe11}
Let $\pi:(X,T)\rightarrow (Y,S)$ be a factor map between GTDSs. If
$(Y,S)$ has finite entropy, then
\begin{equation*}
\sup_{E\in 2^X} (h(T,E)-h(S,\pi(E))=h_{\text{top}}(T,X|\pi).
\end{equation*}
\end{prop}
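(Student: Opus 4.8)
The plan is to prove the asserted equality by establishing the two inequalities separately; both are immediate once Theorem \ref{Fe} is in hand, together with the defining formula $h_{\text{top}}(T,X|\pi)=\sup_{y\in Y}h(T,\pi^{-1}(y))$. The entire quantitative content has already been carried by Theorem \ref{Fe}, so the present statement is essentially a matter of recognizing that its upper bound is sharp.

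For the inequality ``$\le$'', I would simply invoke the ``in particular'' clause of Theorem \ref{Fe}, which applies because $(Y,S)$ has finite entropy: for every $E\in 2^X$ one has
$$h(T,E)-h(S,\pi(E))\le h_{\text{top}}(T,X|\pi).$$
The finite-entropy hypothesis on $(Y,S)$ is exactly what guarantees $h(S,\pi(E))\le h_{\text{top}}(S,Y)<+\infty$, so that the displayed difference is well defined and the rearrangement of the upper estimate of Theorem \ref{Fe} is legitimate. Taking the supremum over all $E\in 2^X$ then gives $\sup_{E\in 2^X}\bigl(h(T,E)-h(S,\pi(E))\bigr)\le h_{\text{top}}(T,X|\pi)$.

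For the reverse inequality ``$\ge$'', the idea is to test the left-hand supremum on the fibers of $\pi$. For each $y\in Y$, surjectivity and continuity of $\pi$ together with compactness of $X$ ensure that $\pi^{-1}(y)$ is a non-empty compact subset, i.e. $\pi^{-1}(y)\in 2^X$, and that $\pi(\pi^{-1}(y))=\{y\}$. Since a singleton carries zero topological entropy, $h(S,\{y\})=0$, and therefore $h(T,\pi^{-1}(y))-h(S,\pi(\pi^{-1}(y)))=h(T,\pi^{-1}(y))$. Taking the supremum over $y\in Y$ yields
$$\sup_{E\in 2^X}\bigl(h(T,E)-h(S,\pi(E))\bigr)\ge \sup_{y\in Y}h(T,\pi^{-1}(y))=h_{\text{top}}(T,X|\pi),$$
and combining the two inequalities finishes the argument.

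I do not anticipate any genuine obstacle: the hard analytic work is entirely absorbed into Theorem \ref{Fe}, and the matching lower bound comes from the cheap observation that each fiber $\pi^{-1}(y)$ projects to a single point of zero entropy. The only point requiring a little care is the bookkeeping role of the finite-entropy hypothesis on $(Y,S)$, which is used solely to keep $h(S,\pi(E))$ finite so that the difference $h(T,E)-h(S,\pi(E))$ is unambiguous. One should also dispose of the degenerate case $h_{\text{top}}(T,X|\pi)=+\infty$, where the ``$\le$'' direction is vacuous and the ``$\ge$'' direction still follows because the fiber entropies $h(T,\pi^{-1}(y))$ are then unbounded.
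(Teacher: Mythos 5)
Your proposal is correct and coincides with the paper's own proof: the inequality ``$\le$'' is exactly the ``in particular'' clause of Theorem \ref{Fe}, and the inequality ``$\ge$'' is obtained, as in the paper, by testing the supremum on the fibers $E=\pi^{-1}(y)$ and using $h(S,\{y\})=0$, so that $\sup_{y\in Y} h(T,\pi^{-1}(y))=h_{\text{top}}(T,X|\pi)$. Your extra remarks on the finiteness of $h(S,\pi(E))$ and on the degenerate case $h_{\text{top}}(T,X|\pi)=+\infty$ are sound but not needed beyond what the paper records.
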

\begin{proof} On one hand  we know
$\sup_{E\in 2^X} (h(T,E)-h(S,\pi(E))\le h_{\text{top}}(T,X|\pi)$ by
Theorem \ref{Fe}. On the other hand, we have
\begin{align*}
\sup_{E\in 2^X} (h(T,E)-h(S,\pi(E))&\ge \sup_{y\in Y}
(h(T,\pi^{-1}(y))-h(S,\{y\}))\\
&=\sup_{y\in Y} h(T,\pi^{-1}(y))= h_{\text{top}}(T,X|\pi).
\end{align*} This completes the proof.
\end{proof}

%Following the same idea of the proof of Theorem \ref{Fe} we can
%prove the following Theorem.
%
%\begin{thm} \label{observation 06.01.08}
%Let $(X, T)$ be a TDS. Assume that $(X_1, T)$ is a sub-system of
%$(X, T)$ admitting a family of non-empty compact subsets
%$\{K_x\}_{x\in X_1}$ of $X$ such that $\bigcup_{x\in X_1} K_x= X$
%and the map $X_1\rightarrow 2^X, x\mapsto K_x$ is upper-semi
%continuous. If for each $x\in X_1$ given a point $k_x\in K_x$ then
%one has that for each $X_2\subseteq X_1$
%\begin{equation*}
%h (T, \{k_x: x\in X_2\})\le h \left(T, \bigcup_{x\in X_2}
%K_x\right)\le h (T, \{k_x: x\in X_2\})+ \sup_{x\in X_1} h (T, K_x).
%\end{equation*}
%In particular, if $(X_1, T)$ has finite entropy then
%$$\sup_{X_2\in 2^{X_1}} \left(h
%\left(T, \bigcup_{x\in X_2} K_x\right)- h (T, \{k_x: x\in
%X_2\})\right)= \sup_{x\in X_1} h (T, K_x).$$
%\end{thm}

Let $\pi: (X, T)\rightarrow (Y, S)$ be a factor map between GTDSs.
We call that $\pi$ is a {\it principal factor map} (or $(X, T)$ is
a {\it principal extension of $(Y, S)$}) if $h_{\mu} (T, X)=
h_{\pi \mu} (S, Y)$ for each $\mu\in \mathcal{M} (X, T)$. This was
introduced and studied firstly by Ledrappier \cite{Le}.

The following result is a direct consequence of Lemma \ref{BD22}
and Theorem \ref{Fe}.

\begin{cor} \label{corollary-06.05.23}
Let $\pi: (X, T)\rightarrow (Y, S)$ be a principal factor map
betweens GTDSs. If $(Y, S)$ has finite entropy then
$h_{\text{top}}(T, X| \pi)= 0$ and $h (T, K)= h (S, \pi (K))$ for
all $K\in 2^{X}$.
\end{cor}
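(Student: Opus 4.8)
The plan is to first prove that the relative topological entropy $h_{\text{top}}(T,X|\pi)$ vanishes, and then to feed this into the two-sided estimate of Theorem \ref{Fe} so that it collapses to an equality. The entire argument is assembled from the three parts of Lemma \ref{BD22}. I would begin at the measure-theoretic level: fix $\mu\in\mathcal{M}(X,T)$ and apply Lemma \ref{BD22}(3), which gives $h_\mu(T,X)=h_\mu(T,X|\pi)+h_{\pi\mu}(S,Y)$. Since $(Y,S)$ has finite entropy, $h_{\pi\mu}(S,Y)\le h_{\text{top}}(S,Y)<+\infty$, so the identity may legitimately be rearranged into $h_\mu(T,X|\pi)=h_\mu(T,X)-h_{\pi\mu}(S,Y)$. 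The hypothesis that $\pi$ is a principal factor map is exactly the statement $h_\mu(T,X)=h_{\pi\mu}(S,Y)$ for every $\mu\in\mathcal{M}(X,T)$, and hence $h_\mu(T,X|\pi)=0$ for all such $\mu$.

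Next I would lift this vanishing to the topological level. For an arbitrary $\nu\in\mathcal{M}(Y,S)$, the fiber $\{\mu\in\mathcal{M}(X,T):\pi\mu=\nu\}$ is non-empty because $\pi$ is a surjective factor map and every $S$-invariant measure on $Y$ admits a $T$-invariant lift to $X$; therefore Lemma \ref{BD22}(2) yields $h(T,X|\nu)=\sup\{h_\mu(T,X|\pi):\pi\mu=\nu\}=0$. Taking the supremum over all $\nu\in\mathcal{M}(Y,S)$ and invoking Lemma \ref{BD22}(1) then gives $h_{\text{top}}(T,X|\pi)=\sup_{\nu\in\mathcal{M}(Y,S)}h(T,X|\nu)=0$, which is the first assertion of the corollary.

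Finally I would deduce the equality of entropies over subsets directly from Theorem \ref{Fe}, which supplies for each $K\in 2^X$ the chain $h(S,\pi(K))\le h(T,K)\le h(S,\pi(K))+h_{\text{top}}(T,X|\pi)$. Substituting the value $h_{\text{top}}(T,X|\pi)=0$ obtained above forces both inequalities to be equalities, so $h(T,K)=h(S,\pi(K))$ for every $K\in 2^X$. I do not expect a genuine obstacle, since the result is flagged as a corollary of the two cited results; the one point demanding care is the rearrangement in the first step, and it is precisely there that the finite-entropy hypothesis on $(Y,S)$ is essential, for without it the difference $h_\mu(T,X)-h_{\pi\mu}(S,Y)$ need not be well defined and principality alone would not force the conditional entropy to be zero.
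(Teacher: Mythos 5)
Your proposal is correct and follows exactly the route the paper intends: the paper presents this corollary as a direct consequence of Lemma \ref{BD22} and Theorem \ref{Fe}, and your argument---using Lemma \ref{BD22}(3) together with principality and the finite entropy of $(Y,S)$ to get $h_\mu(T,X|\pi)=0$ for every $\mu\in\mathcal{M}(X,T)$, then parts (2) and (1) to conclude $h_{\text{top}}(T,X|\pi)=0$, and finally the sandwich in Theorem \ref{Fe}---is precisely that derivation. You also correctly identify the one delicate point, namely that the finiteness hypothesis is what legitimizes the subtraction in Lemma \ref{BD22}(3).
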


A characterization of asymptotical $h$-expansiveness is obtained
recently by Boyle and Downarowicz as the following \cite[Theorem
8.6]{BD}:

\begin{lem} \label{BD}
A TDS $(X,T)$ is  asymptotically $h$-expansive iff it admits a
principal extension to a symbolic TDS.
\end{lem}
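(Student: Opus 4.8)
The plan is to read asymptotic $h$-expansiveness as the vanishing of the tail (topological conditional) entropy $h^*(T)=\lim_{\epsilon\to 0+}h_T^*(\epsilon)$, and to prove the two implications separately; the statement as a whole is the Boyle--Downarowicz theorem, whose engine is the theory of entropy structures, so I would organize the argument around that invariant.

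\textbf{The direction $\Leftarrow$.} Suppose $\pi:(Y,S)\to(X,T)$ is a principal factor map with $(Y,S)$ a subshift. A subshift is expansive, hence $h$-expansive by Bowen, so $h_S^*(\epsilon)=0$ for small $\epsilon$ and $h^*(S)=0$; thus $(Y,S)$ is asymptotically $h$-expansive. The task is to transport this down a principal extension. I would use the characterization of $h^*(T)$ as the maximal tail of an entropy structure, i.e. the defect of uniform convergence of an increasing sequence $h_k\nearrow(\mu\mapsto h_\mu(T,X))$ on $\mathcal M(X,T)$. Principality gives $h_\nu(S,Y)=h_{\pi\nu}(T,X)$ for every $\nu\in\mathcal M(Y,S)$, so $\pi$ adds no measure entropy and the entropy functions of $Y$ and $X$ agree under the affine surjection $\pi_*:\mathcal M(Y,S)\to\mathcal M(X,T)$; in Downarowicz's framework the entropy structure of $Y$ then descends to one for $X$, and uniform convergence upstairs forces uniform convergence downstairs. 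Uniform convergence downstairs is exactly $h^*(T)=0$, so $(X,T)$ is asymptotically $h$-expansive.

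\textbf{The direction $\Rightarrow$.} Assume $h^*(T)=0$ and build a principal symbolic extension. By Misiurewicz's reformulation, $h^*(T)=0$ is equivalent to uniform convergence of the entropy structure, i.e. $u_k:=\bigl(\mu\mapsto h_\mu(T,X)\bigr)-h_k\to 0$ uniformly on $\mathcal M(X,T)$. In the Boyle--Downarowicz calculus the least extension-entropy function attainable by a symbolic extension is the smallest upper semicontinuous superenvelope of the entropy structure, and uniform convergence is precisely the condition under which the entropy function $\mu\mapsto h_\mu(T,X)$ is itself such a superenvelope. I would then realize this minimal function by an inductive coding producing a subshift $(Y,S)$ with a factor map $\pi$ onto $(X,T)$: at successive resolutions one injects just enough symbols to record the entropy gained between consecutive levels of the structure, and the uniform control of $u_k$ lets the cumulative coding rate be tuned so that $h_\nu(S,Y)=h_{\pi\nu}(T,X)$ for every $\nu\in\mathcal M(Y,S)$; this is exactly principality.

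\textbf{The main obstacle.} All the difficulty sits in the construction of the forward direction: one must manufacture an honest subshift surjecting onto $(X,T)$ that adds no measure entropy on any fiber, and this must hold simultaneously over the entire simplex $\mathcal M(X,T)$ rather than at a single measure. This is why a one-measure variational estimate does not suffice and one is forced through the entropy-structure/superenvelope formalism and the explicit tower coding of Boyle and Downarowicz. A related subtlety worth flagging is that upper semicontinuity of the entropy map alone is too weak; the proof genuinely uses the stronger uniform vanishing $h^*(T)=0$, which is what guarantees there is no residual entropy that every symbolic extension would be compelled to encode.
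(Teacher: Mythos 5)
You cannot really be matched against an internal argument here: the paper offers no proof of this lemma at all, quoting it directly as \cite[Theorem 8.6]{BD}, so within the paper the statement functions as a black box imported from Boyle and Downarowicz. Measured against that cited proof, your outline reproduces its architecture faithfully: asymptotic $h$-expansiveness read as uniform convergence of an entropy structure (Misiurewicz's $h^*(T)=0$ translated through Downarowicz's theory), the symbolic extension entropy function identified with the minimal superenvelope, and the realization of the entropy function itself as that superenvelope by an inductive coding when the convergence is uniform. One remark on the direction $\Leftarrow$: the entropy-structure machinery is heavier than needed and slightly misstated as you use it — entropy structures are \emph{lifted} along a principal map (the lift of an entropy structure of $X$ is an entropy structure of $Y$), not pushed down, and one then descends uniform convergence using surjectivity of $\pi_*$; but in fact the paper's own toolkit gives this direction in one line, since a subshift is expansive, hence asymptotically $h$-expansive, and Ledrappier's theorem \cite[Theorem 3]{Le} — which this paper itself invokes in Proposition \ref{standard} and in the Appendix — says a principal extension preserves asymptotic $h$-expansiveness in both directions.

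As a standalone proof, however, your forward direction has a genuine gap, which to your credit you flag yourself: the tower coding that manufactures an honest subshift extension whose extension entropy function equals $\mu\mapsto h_\mu(T,X)$ at \emph{every} invariant measure simultaneously is the entire content of the Boyle--Downarowicz construction (building on Downarowicz's earlier symbolic-extension paper), and nothing in your sketch carries it out. Likewise, the equivalences you lean on are themselves substantial theorems that must be quoted or proved: that $h^*(T)=0$ is equivalent to uniform convergence of the entropy structure; that under uniform convergence the entropy function is the minimal superenvelope; and that this minimal value is actually \emph{attained} by a symbolic extension, which requires $h_{\text{top}}(T,X)<+\infty$ and upper semicontinuity of the entropy function, both consequences of asymptotic $h$-expansiveness that your sketch uses tacitly. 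Since the paper cites the result rather than proving it, your proposal is at the right level as a roadmap into \cite{BD}, but it should not be mistaken for a self-contained proof.
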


Then question \ref{q 3} is answered as follows:

\begin{thm} \label{ex-general}
A TDS $(X, T)$ is asymptotically $h$-expansive iff it is \HUL.
\end{thm}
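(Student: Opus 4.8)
The plan is to prove the two implications separately, assembling results already established in the excerpt.

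For the direction ``asymptotically $h$-expansive $\Rightarrow$ \HUL'', I would exploit Lemma \ref{BD} (Boyle--Downarowicz). If $(X,T)$ is asymptotically $h$-expansive, it admits a principal extension $\pi:(\widetilde X,\widetilde T)\to(X,T)$ with $(\widetilde X,\widetilde T)$ symbolic, hence expansive. By Theorem \ref{expansive} the symbolic system $(\widetilde X,\widetilde T)$ is \HUL. The key step is then to transfer the \HUL\ property down a principal extension. Given a non-empty compact $K\subseteq X$ and $0\le h\le h(T,K)$, I would lift to a compact $\widetilde K\subseteq\pi^{-1}(K)$ with $\pi(\widetilde K)=K$; since $\pi$ is a principal factor map and $(X,T)$ has finite entropy (asymptotical $h$-expansiveness forces finite entropy, as remarked in the abstract), Corollary \ref{corollary-06.05.23} gives $h(\widetilde T,\widetilde K)=h(T,\pi(\widetilde K))=h(T,K)$. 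Applying \HUL\ of $(\widetilde X,\widetilde T)$ to $\widetilde K$ and the value $h$, I obtain a compact $\widetilde K_h\subseteq\widetilde K$ with at most one limit point and $h(\widetilde T,\widetilde K_h)=h$. Then $K_h\doteq\pi(\widetilde K_h)$ is a compact subset of $K$ with at most one limit point (continuous image), and by Corollary \ref{corollary-06.05.23} again, $h(T,K_h)=h(\widetilde T,\pi^{-1}(K_h)\cap\widetilde K_h\text{-fibre})$; more directly $h(T,K_h)=h(\widetilde T,\widetilde K_h)=h$. This establishes that $(X,T)$ is \HUL.

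For the converse direction ``\HUL\ $\Rightarrow$ asymptotically $h$-expansive'', this is precisely Theorem \ref{haech}, already proved in Section 6, so I would simply invoke it.

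The main obstacle I anticipate is the downward transfer of \HUL\ through the principal extension, specifically two delicate points. First, I must check that $\pi(\widetilde K_h)$ genuinely has at most one limit point: a continuous image of a set with at most one limit point can in principle acquire more accumulation if distinct points collapse, but since $\widetilde K_h$ is countable compact with a single limit point, its image is countable compact and its derived set is the image of a single point together with possible collisions, which still yields at most one limit point provided $\pi$ is injective on the relevant fibre—so I would instead argue that $\pi(\widetilde K_h)$ is a countable compact set whose only possible accumulation point is $\pi(\widetilde x_0)$, where $\widetilde x_0$ is the unique limit point of $\widetilde K_h$. Second, the entropy preservation $h(T,\pi(\widetilde K_h))=h(\widetilde T,\widetilde K_h)$ requires Corollary \ref{corollary-06.05.23}, whose hypothesis ``$(Y,S)$ has finite entropy'' must be verified; here the \emph{base} is $(X,T)$, which has finite entropy because a \HUL\ system has finite entropy (again from the abstract, or because by Theorem \ref{haech} it is asymptotically $h$-expansive, hence of finite entropy). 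Assembling these, together with the lifting of a compact subset to the extension so that its image is exactly $K$, completes the argument.

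\begin{proof}
If $(X,T)$ is \HUL, then it is asymptotically $h$-expansive by Theorem \ref{haech}.

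Conversely, suppose $(X,T)$ is asymptotically $h$-expansive. By Lemma \ref{BD} there is a principal extension $\pi:(\widetilde X,\widetilde T)\to(X,T)$ with $(\widetilde X,\widetilde T)$ a symbolic TDS; in particular $(\widetilde X,\widetilde T)$ is expansive, so by Theorem \ref{expansive} it is \HUL. Since $(X,T)$ is asymptotically $h$-expansive it has finite entropy, whence Corollary \ref{corollary-06.05.23} applies to the principal factor map $\pi$, giving $h_{\text{top}}(\widetilde T,\widetilde X|\pi)=0$ and
\begin{equation*}
h(\widetilde T,\widetilde K)=h(T,\pi(\widetilde K))\ \text{for all}\ \widetilde K\in 2^{\widetilde X}.
\end{equation*}
Now let $K\subseteq X$ be a non-empty compact subset and $0\le h\le h(T,K)$. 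Choose a compact $\widetilde K\subseteq\widetilde X$ with $\pi(\widetilde K)=K$ (for instance $\widetilde K=\pi^{-1}(K)$); then $h(\widetilde T,\widetilde K)=h(T,K)\ge h$. Applying the \HUL\ property of $(\widetilde X,\widetilde T)$ to $\widetilde K$ and $h$, we obtain a non-empty compact $\widetilde K_h\subseteq\widetilde K$ with at most one limit point $\widetilde x_0$ and $h(\widetilde T,\widetilde K_h)=h$. Set $K_h=\pi(\widetilde K_h)$. Then $K_h$ is a non-empty compact subset of $K$, and since $\widetilde K_h$ is countable with unique possible accumulation point $\widetilde x_0$, its continuous image $K_h$ is countable with unique possible accumulation point $\pi(\widetilde x_0)$, so $K_h$ has at most one limit point. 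Finally, by the displayed equality applied to $\widetilde K_h$,
\begin{equation*}
h(T,K_h)=h(T,\pi(\widetilde K_h))=h(\widetilde T,\widetilde K_h)=h.
\end{equation*}
Hence $(X,T)$ is \HUL.
\end{proof}
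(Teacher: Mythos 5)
Your proof is correct and takes essentially the same route as the paper: one direction is Theorem \ref{haech}, and for the other you use the Boyle--Downarowicz principal symbolic extension (Lemma \ref{BD}) together with Theorem \ref{expansive} and Corollary \ref{corollary-06.05.23}, lifting $K$ to $\pi^{-1}(K)$ and pushing the one-limit-point compact set back down. The only cosmetic difference is your justification of the finite-entropy hypothesis for Corollary \ref{corollary-06.05.23}; the paper gets it instantly from the extension you already have, via $h_{\text{top}}(T,X)\le h_{\text{top}}(\widetilde T,\widetilde X)<+\infty$ since $(\widetilde X,\widetilde T)$ is symbolic.
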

\begin{proof}
First, each \HUL\ TDS is asymptotically $h$-expansive by Theorem
\ref{haech}.

Now assume that TDS $(X, T)$ is asymptotically $h$-expansive and by
Lemma \ref{BD} let $\pi: (X', T')\rightarrow (X, T)$ be a principal
factor map with $(X', T')$ a symbolic TDS. Then $h_{\text{top}} (T,
X)\le h_{\text{top}} (T', X')< +\infty$ (as $(X', T')$ is a symbolic
TDS), and so for each $E\in 2^{X'}$, we have  $h (T', E)= h (T, \pi
(E))$ by Corollary \ref{corollary-06.05.23}. Given $E\in 2^X$. Since
$(X', T')$ is an expansive TDS, then using Theorem \ref{expansive}
we have that for each $0\le h\le h (T, E)= h (T', \pi^{- 1} (E))$
there exists a countable compact subset $X'_h\subseteq \pi^{- 1}
(E)$ with at most a limit point in $X'$ such that $h (T', X'_h)= h$.
Now set $X_h= \pi (X'_h)\subseteq E$. So $X_h\subseteq X$ is a
countable compact subset with at most a limit point in $X$ and $h
(T, X_h)= h (T', X'_h)= h$. That is, TDS $(X, T)$ is \HUL.
\end{proof}

Moreover, combining with Corollary \ref{corollary-06.05.23} and
Theorem \ref{ex-general} we claim that principal extension preserves
the properties of lowering, hereditary lowering and \HUL.

\begin{prop} \label{standard}
Let $\pi: (X', T')\rightarrow (X, T)$ be a principal factor map
between TDSs. If $(X, T)$ has finite entropy then
\begin{enumerate}

\item $(X', T')$ is asymptotically $h$-expansive iff so is $(X, T)$.

\item $(X', T')$ is lowerable (resp. hereditarily lowerable,
\HUL) iff so is $(X, T)$.
\end{enumerate}
\end{prop}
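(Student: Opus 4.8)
The plan is to reduce everything to the entropy-transfer identity provided by Corollary~\ref{corollary-06.05.23}. Since $(X,T)$ has finite entropy and $\pi$ is a principal factor map, that corollary gives $h_{\text{top}}(T',X'|\pi)=0$ and, crucially,
\[
h(T',K)=h(T,\pi(K))\qquad\text{for every }K\in 2^{X'}.
\]
Taking $K=X'$ (and using that $\pi$ is onto) shows $h_{\text{top}}(T',X')=h_{\text{top}}(T,X)<\infty$, so $(X',T')$ also has finite entropy; in particular the ranges $0\le h\le h(T',X')$ and $0\le h\le h(T,X)$ coincide. I would then treat the three lowering properties of part (2) by pushing compact subsets back and forth along $\pi$, and defer the \HUL\ case to part (1) via Theorem~\ref{ex-general}.

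For the lowerable and hereditarily lowerable equivalences I would use the two elementary operations $K'\mapsto\pi(K')$ and $K\mapsto\pi^{-1}(K)$, both of which land in the respective hyperspaces and preserve entropy by the displayed identity (note $\pi(\pi^{-1}(K))=K$ since $\pi$ is surjective). Concretely: if $(X',T')$ is lowerable and $0\le h\le h(T,X)$, choose $K'\in 2^{X'}$ with $h(T',K')=h$ and take $\pi(K')$; conversely, given $K\in 2^{X}$ with $h(T,K)=h$, take $\pi^{-1}(K)$. The hereditary versions are the same, localized to a prescribed set: from $(X',T')$ I apply lowerability of $\pi^{-1}(K)$ and push the result down by $\pi$; from $(X,T)$, given $K'\in 2^{X'}$ and a target $K_h\subseteq\pi(K')$ with $h(T,K_h)=h$, I set $L'=K'\cap\pi^{-1}(K_h)\in 2^{X'}$, check $\pi(L')=K_h$ (the inclusion $\supseteq$ holds because every point of $K_h\subseteq\pi(K')$ has a preimage inside $K'$), and conclude $h(T',L')=h(T,K_h)=h$ with $L'\subseteq K'$. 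These steps are routine once the entropy identity is in hand.

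For part (1), the implication ``$(X',T')$ asymptotically $h$-expansive $\Rightarrow$ $(X,T)$ asymptotically $h$-expansive'' I would obtain from Lemma~\ref{BD}: an asymptotically $h$-expansive $(X',T')$ admits a principal factor map $\psi$ from a symbolic TDS $(\Sigma,\sigma)$, and then $\pi\circ\psi$ is again principal, since for $\mu\in\mathcal{M}(\Sigma,\sigma)$ one has $h_\mu(\sigma,\Sigma)=h_{\psi\mu}(T',X')=h_{(\pi\circ\psi)\mu}(T,X)$; thus $(X,T)$ is a principal factor of a symbolic system and Lemma~\ref{BD} applies. For the reverse implication I would compare tail entropies directly. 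Recall $h_T^*(\epsilon)=\sup_{x\in X}h(T,\Phi_{\epsilon}(x))$. For $x'\in X'$ and $\epsilon>0$ the set $\Phi_{\epsilon}(x')$ is a non-empty closed, hence compact, subset of $X'$, and uniform continuity of $\pi$ furnishes $\delta(\epsilon)\to 0$ with $\pi(\Phi_{\epsilon}(x'))\subseteq\Phi_{\delta(\epsilon)}(\pi(x'))$ (using $\pi\circ T'=T\circ\pi$). Hence
\[
h(T',\Phi_{\epsilon}(x'))=h(T,\pi(\Phi_{\epsilon}(x')))\le h(T,\Phi_{\delta(\epsilon)}(\pi(x')))\le h_T^*(\delta(\epsilon)).
\]
Taking the supremum over $x'$ and letting $\epsilon\to0+$ yields $\lim_{\epsilon\to0+}h_{T'}^*(\epsilon)\le\lim_{\epsilon\to0+}h_T^*(\epsilon)$, so $(X,T)$ asymptotically $h$-expansive forces $(X',T')$ asymptotically $h$-expansive. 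The \HUL\ equivalence in part (2) then follows formally: by Theorem~\ref{ex-general}, being \HUL\ is equivalent to being asymptotically $h$-expansive for each of $(X',T')$ and $(X,T)$, and part (1) matches the two.

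I expect the genuine obstacle to be the reverse direction of part (1), namely the tail-entropy comparison $\lim_{\epsilon\to0+}h_{T'}^*(\epsilon)\le\lim_{\epsilon\to0+}h_T^*(\epsilon)$; the point is that the fibre-wise complexity added by a principal extension is invisible because $h_{\text{top}}(T',X'|\pi)=0$, which is exactly what the entropy identity of Corollary~\ref{corollary-06.05.23} encodes, once one checks that $\Phi_{\epsilon}(x')$ is compact and that its $\pi$-image sits in a small Bowen ball of $X$. This is also what lets me avoid the seemingly hardest task --- directly lifting a countable subset of $\pi(K')$ with ``at most one limit point'' back into $K'$ without losing entropy (a naive lift followed by passing to a convergent subsequence may destroy the entropy). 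Routing the \HUL\ case through asymptotic $h$-expansiveness and Theorem~\ref{ex-general} bypasses this difficulty completely.
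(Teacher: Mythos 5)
Your proof is correct, and its overall skeleton is the paper's: the paper disposes of part (2) with the one-line remark that it ``follows directly from Corollary \ref{corollary-06.05.23}, Theorem \ref{ex-general} and (1)'', and your push-forward/pull-back bookkeeping --- $\pi(K')$ and $\pi^{-1}(K)$ for the lowerable case, $L'=K'\cap\pi^{-1}(K_h)$ with $\pi(L')=K_h$ for the hereditary case --- is precisely the routine verification being left to the reader there; likewise, your decision to route \HUL\ through asymptotic $h$-expansiveness instead of trying to lift countable one-limit-point sets is exactly the paper's route. The genuine divergence is in part (1): the paper quotes Ledrappier \cite[Theorem 3]{Le} as a black box (``a principal extension preserves asymptotic $h$-expansiveness''), whereas you re-prove both implications from material internal to the paper. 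Your forward direction composes a principal symbolic extension $\psi$ of $(X',T')$ (Lemma \ref{BD}) with $\pi$ --- principality composes, since it is an identity $h_\mu=h_{\text{push-forward}}$ of measure-theoretic entropies --- and applies Lemma \ref{BD} again; your reverse direction compares tail entropies via compactness of $\Phi_\epsilon(x')$, the inclusion $\pi(\Phi_{\epsilon}(x'))\subseteq\Phi_{\delta(\epsilon)}(\pi(x'))$ (uniform continuity plus $\pi\circ T'=T\circ\pi$), the identity $h(T',\Phi_{\epsilon}(x'))=h(T,\pi(\Phi_{\epsilon}(x')))$ from Corollary \ref{corollary-06.05.23}, and monotonicity of $h_T^*$ together with $\delta(\epsilon)\to 0$. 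All of these steps check out, and there is no circularity since Theorem \ref{ex-general} and Lemma \ref{BD} do not depend on Proposition \ref{standard}. As for what each approach buys: the paper's citation is shorter, and Ledrappier's theorem is a general fact about principal extensions proved by relative variational-principle methods, with no detour through symbolic extensions; your version is self-contained within the paper's Sections 6--7, at the mild cost that the reverse direction consumes the finite-entropy hypothesis through Corollary \ref{corollary-06.05.23} (harmless, since the proposition assumes it), while your forward direction needs no entropy hypothesis at all.
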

\begin{proof}
(1) is only a special case of Ledrappier's result about principal
extensions \cite[Thorem 3]{Le}. (2) follows directly from Corollary
\ref{corollary-06.05.23}, Theorem \ref{ex-general} and (1).
\end{proof}

%\section{Remarks on hereditary lowering}

It is not hard to construct examples with infinite entropy which
are hereditarily lowerable. Thus, there are TDSs which are
hereditarily lowerable but not \HUL. In fact, an example with the
same property which has finite entropy exists.

\begin{ex} \label{example 1}
There exists a hereditarily lowerable TDS $(X, T)$ with finite
entropy which is not \HUL. The detailed construction is given as
follows:
\end{ex}

Take a countable copies of the full shift over $\{0, 1\}^\Z$ and
embed them into $B_n$ with $\{B_n\}_{n\in \mathbb{N}}$ a sequence of
disjoint compact balls in $\mathbb{R}^2$ such that $(0, 0)\notin
B_n\ra \{(0, 0)\}$ (in the sense of Hausdorff metric). Let $(X, T)$
be the TDS of the union of $\{(0, 0)\}$ with these copies, where $T$
is the shift if it is restricted on each copy and $(0, 0)\mapsto (0,
0)$. Then $h_{\text{top}} (T, X)= \log 2$ (using the classical
variational principle). For each copy we may take $C_n\subseteq B_n$
with entropy $a_n< \log 2$ such that $\lim_{n\rightarrow +\infty}
a_n= \log 2$ (using Theorem \ref{aim of paper}). Then $h (T,
\bigcup_{n\in \mathbb{N}} C_n\cup \{(0, 0)\})= \log 2$. Whereas, by
definition it is not hard to see that any countable compact subset
of $X$ with a unique limit point $(0, 0)$ must have zero entropy,
which implies that each countable compact subset of $\bigcup_{n\in
\mathbb{N}} C_n\cup \{(0, 0)\}$ with a unique limit point has
entropy smaller strictly than $\log 2$. Thus $(X, T)$ is not \HUL.

 Now we claim
that $(X, T)$ is hereditarily lowerable. For each $n\in \mathbb{N}$,
we take $x_n\in B_n$. Then it is not hard to see that
$h(T,\{x_n\}_{n\in \mathbb{N}}\cup \{ (0,0)\})=0$. Let $K\in 2^X$
and $0\le h \le h (T, K)$. For each $n\in \mathbb{N}$, set
$K_n=K\cap B_n$. Since $B_n$ is \HUL\ (see Theorem \ref{expansive}),
we may take $K_n^h\in 2^{K_n}$ with $h (T, K_n^h)= \min \{h, h (T,
K_n)\}$. Using Lemma \ref{sum} we have
\begin{eqnarray*}
\sup_{n\in \mathbb{N}} h (T, K_n)&=& \max \left\{\sup_{n\in
\mathbb{N}} h(T, K_n\cup \{x_n\}),h(T,\{x_n\}_{n\in \mathbb{N}}\cup
\{
(0,0)\})\right\}\\
&=& h \left(T, \bigcup_{n\in \mathbb{N}} (K_n\cup \{x_n\})
\cup\{(0,0)\}\right)\ge h(T, K)\ge h.
\end{eqnarray*}
This implies
 $\sup_{n\in
\mathbb{N}} h (T, K_n^h)= h$. Let $K_h= \bigcup_{n\in \mathbb{N}}
K_n^h\cup\{(0,0)\}\in 2^K$. Then using Lemma \ref{sum} again one has
$h (T, K_h)= \sup_{n\in \mathbb{N}} h (T, K_n^h)= h$. This means
that $K$ is lowerable, and so $(X, T)$ is a hereditarily lowerable
TDS. This ends the example.

\medskip
It is not difficult to show that the above example (by a small
modification) has a symbolic extension with the same entropy, which
is not a principal one (see \cite{BD} for other examples of the same
type). Thus it is an interesting question if each system having a
symbolic extension is hereditarily lowerable.
%\newpage

\section{Appendix}
In this Appendix we want to explain that our main results hold for
GTDSs.  Note that we can also introduce the lowerable, hereditarily
lowerable, {\bf HUL} and asymptotically $h$-expansive properties for
a GTDS. Let $(X,T)$ be a GTDS. If $T$ is surjective, we can use the
standard natural extension as follows:

Assume that $d$ is a metric on $X$. We say $(X_T,S)$ is the {\it
natural extension} of $(X,T)$, if $X_T=\{ (x_1, x_2, \cdots):
T(x_{i+1})=x_i, x_i\in X, i\in \N \}$, which is a sub-space of the
product space $\prod_{i=1}^\infty X$ with the compatible metric
$d_T$ defined by
$$d_T((x_1, x_2, \cdots),(y_1,y_2,\cdots))=\sum_{i=1}^\infty
\frac{d(x_i,y_i)}{2^i}.$$ Moreover, $S:X_T\ra X_T$ is the shift
homeomorphism, i.e. $S(x_1, x_2, x_3, \cdots)= (T(x_1), x_1, x_2,
\cdots)$. The following observation is easy.

\begin{thm} Let $(X_T,S)$ be the natural extension of $(X,T)$ with $T$
surjective. Then $(X_T,S)$ is lowerable (resp. hereditarily
lowerable, {\bf HUL}, asymptotically $h$-expansive) iff so is
$(X,T)$.
\end{thm}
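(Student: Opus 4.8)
The plan is to exploit the first-coordinate projection $\pi\colon X_T\to X$, $\pi(x_1,x_2,\dots)=x_1$. Since $T$ is surjective every point of $X$ admits a backward orbit, so $\pi$ is onto, and from $S(x_1,x_2,\dots)=(T(x_1),x_1,x_2,\dots)$ one reads off $\pi\circ S=T\circ\pi$; thus $\pi\colon(X_T,S)\to(X,T)$ is a factor map of GTDSs. The heart of the matter is the estimate $h_{\text{top}}(S,X_T|\pi)=0$. I would prove it by a direct contraction computation on fibres: if $u=(x_1,x_2,\dots)$ and $v=(x_1,y_2,\dots)$ both lie in $\pi^{-1}(x_1)$, then $S^iu$ and $S^iv$ share their first $i+1$ coordinates $(T^ix_1,\dots,x_1)$, whence
\[
d_T(S^iu,S^iv)=\sum_{k\ge i+2}\frac{d((S^iu)_k,(S^iv)_k)}{2^k}\le \frac{\text{diam}(X)}{2^{i+1}}\longrightarrow 0\quad(i\to\infty).
\]
Consequently, for fixed $\varepsilon>0$ choosing $N$ with $\text{diam}(X)\,2^{-(N+1)}<\varepsilon$ makes a subset of $\pi^{-1}(x_1)$ be $(n,\varepsilon)$-separated (for $n\ge N$) exactly when it is $(N,\varepsilon)$-separated, so $s_n(d_T,S,\varepsilon,\pi^{-1}(x_1))$ is bounded in $n$ and $h(S,\pi^{-1}(x_1))=0$ for every $x_1\in X$.

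Two facts follow at once. First, applying Theorem~\ref{Fe} to the factor map $\pi$ gives
\[
h(T,\pi(E))\le h(S,E)\le h(T,\pi(E))+h_{\text{top}}(S,X_T|\pi)=h(T,\pi(E)),
\]
so $h(S,E)=h(T,\pi(E))$ for every $E\in 2^{X_T}$. Second, by Lemma~\ref{BD22}(3), $h_\mu(S,X_T)=h_\mu(S,X_T|\pi)+h_{\pi\mu}(T,X)=h_{\pi\mu}(T,X)$ for every $\mu\in\mathcal{M}(X_T,S)$ (the conditional term vanishes since $h_\mu(S,X_T|\pi)\le h_{\text{top}}(S,X_T|\pi)=0$); i.e.\ $\pi$ is a principal factor map. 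With the identity $h(S,E)=h(T,\pi(E))$ in hand the lowerable and hereditarily lowerable cases are routine transports. If $(X,T)$ is (hereditarily) lowerable and $E\in 2^{X_T}$, put $K=\pi(E)$, lower $K$ (or $X$) inside $X$ to the prescribed entropy value, and pull the resulting set $K'\subseteq K$ back by $E\cap\pi^{-1}(K')$, using $\pi(E\cap\pi^{-1}(K'))=K'$ and $\pi(\pi^{-1}(K))=K$. Conversely one pushes a witness $E'\subseteq X_T$ down to $\pi(E')\in 2^X$. Each step preserves the entropy value by the identity and preserves compactness by continuity and surjectivity of $\pi$.

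The \HUL\ and asymptotically $h$-expansive cases are where the real work lies. Since $\pi$ is principal, Ledrappier's theorem \cite{Le} (used already in Proposition~\ref{standard}(1)) shows that $(X_T,S)$ is asymptotically $h$-expansive iff $(X,T)$ is. For \HUL\ I would argue as follows. The downward implication $(X_T,S)\ \HUL\Rightarrow(X,T)\ \HUL$ is direct: given $K\in 2^X$ and $0\le h\le h(T,K)$, lower $E=\pi^{-1}(K)$ inside $X_T$ to a compact $E_h\subseteq E$ with $h(S,E_h)=h$ and at most one limit point, and set $K_h=\pi(E_h)$; then $h(T,K_h)=h(S,E_h)=h$, and $K_h$ has at most one limit point because a continuous image of a convergent sequence converges. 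The upward implication I would route through the equivalence with asymptotic $h$-expansiveness: $(X,T)\ \HUL$ forces $(X,T)$ asymptotically $h$-expansive (Theorem~\ref{haech}, whose proof uses no invertibility), hence $(X_T,S)$ is asymptotically $h$-expansive by the principal-extension step, hence $(X_T,S)\ \HUL$ by Theorem~\ref{ex-general} applied to the genuine TDS $(X_T,S)$.

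The main obstacle is precisely this upward \HUL/asymptotic direction. One cannot simply lift a low-entropy set with a single limit point from $X$ to $X_T$: the natural extension records the whole backward history while $\Phi_\varepsilon^T(x)$ constrains only forward orbits, so the fibres of $\pi$ need not vary lower-semicontinuously and no convergent compact section over a given convergent sequence need exist. Passing through the principal-extension (measure-entropy-preserving) property circumvents this difficulty entirely, which is why establishing $h_{\text{top}}(S,X_T|\pi)=0$ — and with it both the subset identity and principality of $\pi$ — is the decisive step on which all four equivalences rest.
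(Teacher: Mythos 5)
Your proposal is correct, and its skeleton coincides with the paper's: the same first-coordinate projection $\pi_1$, the same contraction estimate showing $\text{diam}(S^n\pi_1^{-1}(x))\to 0$ and hence $h(S,\pi_1^{-1}(x))=0$, Theorem~\ref{Fe} to get $h(S,E)=h(T,\pi_1(E))$ for every $E\in 2^{X_T}$ (which transports the lowerable and hereditarily lowerable properties in both directions), and Lemma~\ref{BD22} together with Ledrappier's theorem \cite{Le} for asymptotic $h$-expansiveness via principality. The one place you genuinely deviate is the \HUL\ case: the paper disposes of it with the single phrase ``by Theorem~\ref{Fe}'', which hides the asymmetry you correctly isolate --- the downward direction is a routine projection (continuous images preserve ``at most one limit point''), but the upward direction cannot be done by pulling back through $\pi_1$, since $E\cap\pi_1^{-1}(K_h)$ may acquire an entire (possibly uncountable) fiber's worth of limit points, and no convergent compact section over a prescribed one-limit-point set need exist. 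Your detour --- Theorem~\ref{haech} applied to the GTDS $(X,T)$ (legitimate, since the paper notes invertibility is not needed in Section 6), then principality to move asymptotic $h$-expansiveness up to $(X_T,S)$, then Theorem~\ref{ex-general} applied to the genuine TDS $(X_T,S)$ --- is exactly what is needed to make the paper's one-liner rigorous, and it avoids circularity with the appendix's final theorem, whose proof depends on the present statement. In short, your version buys a fully justified \HUL\ equivalence at the cost of invoking the heavier machinery of Sections 6 and 7, whereas the paper's proof is shorter but leaves this lifting step implicit.
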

\begin{proof} Let $\pi_1: X_T\ra X$ be the projection to the first
coordinate. Observe that
$\text{diam} (S^n\pi_1^{-1}(x))$ $\ra 0$ for each $x\in X$. This
implies that $h(S, \pi_1^{-1}(x))=0$ for each $x\in X$, and hence
$h_{\text{top}}(S, X_T|\pi_1)=\sup_{x\in X}h(S, \pi_1^{-1}(x))=0$.
By Theorem \ref{Fe}, $(X_T,S)$ is lowerable (resp. hereditarily
lowerable, {\bf HUL}) iff so is $(X,T)$.

Since $h(S, \pi_1^{-1}(x))=0$ for each $x\in X$, $\pi_1$ is a
principal extension by Lemma \ref{BD22}. Now as a principal
extension preserves the property of asymptotical $h$-expansiveness
(see \cite[Theorem 3]{Le}), $(X_T,S)$ is asymptotically
$h$-expansive iff so is $(X,T)$.
\end{proof}

If $T$ is not surjective, we will construct a surjective system
$(X', T')$ such that the dynamical properties of $(X,T)$ and $(X',
T')$ are 'very close' as follows:

Let $X'=X\times \{0\}\cup X\times \{\frac{1}{n}: n\in\N\}$.
Moreover, put $T'(x,0)=(x, 0), T'(x, \frac{1}{n+1})=(x,
\frac{1}{n})$ and $T'(x,1)=(Tx,1)$ for $n\in \N$ and $x\in X$.

It is not hard to check that $(X',T')$ is lowerable (resp.
hereditarily lowerable, {\bf HUL}, asymptotically $h$-expansive) iff
so is $(X,T)$. Collecting terms, one has
\begin{thm} Let $(X,T)$ be a GTDS. Then $(X,T)$ is \HUL\ iff it is asymptotically $h$-expansive, and if $(X,T)$ has
finite entropy then it is lowerable.
\end{thm}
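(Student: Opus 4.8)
The plan is to reduce the general case to the homeomorphism case already settled in Theorems \ref{ex-general} and \ref{aim of paper}, by peeling off the two ways a GTDS $(X,T)$ can fail to be a TDS: $T$ may be non-surjective, and $T$ may be non-invertible. Each obstruction is removed by one of the two constructions introduced above, and the point is that both constructions preserve all four properties (lowerable, hereditarily lowerable, \HUL, asymptotically $h$-expansive) as well as finiteness of entropy.

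First I would dispose of non-surjectivity. Given $(X,T)$ with $T$ not surjective, I pass to the surjective GTDS $(X',T')$ constructed above: the level $X\times\{1\}$ carries a faithful copy of $(X,T)$, the levels $X\times\{1/n\}$ ($n\ge 2$) are transient and flow down into it, and $X\times\{0\}$ is a fixed (identity) copy of $X$. Since identity maps and purely transient orbits carry no exponential complexity, one has $h_{\text{top}}(T',X')=h_{\text{top}}(T,X)$, and, as recorded in the construction, $(X',T')$ enjoys any one of the four properties exactly when $(X,T)$ does. Hence it suffices to treat surjective $T$, so from now on I assume $T$ surjective.

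Next I would remove non-invertibility via the natural extension $(X_T,S)$, which is a genuine TDS since $S$ is a homeomorphism. The projection $\pi_1\colon X_T\to X$ satisfies $\text{diam}(S^n\pi_1^{-1}(x))\to 0$, so $h(S,\pi_1^{-1}(x))=0$ for every $x$ and $h_{\text{top}}(S,X_T|\pi_1)=0$; by Lemma \ref{BD22} this makes $\pi_1$ a principal factor map, whence in particular $h_{\text{top}}(S,X_T)=h_{\text{top}}(T,X)$. The preceding theorem on natural extensions — whose lowering and \HUL\ parts rest on the Bowen-type estimate of Theorem \ref{Fe} applied to $\pi_1$, and whose asymptotic $h$-expansiveness part rests on principality together with Ledrappier's result — then gives that $(X_T,S)$ has each of the four properties iff $(X,T)$ does.

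It remains to invoke the homeomorphism results on $(X_T,S)$. By Theorem \ref{ex-general}, $(X_T,S)$ is \HUL\ iff it is asymptotically $h$-expansive; transferring both sides back through the two reductions yields that $(X,T)$ is \HUL\ iff it is asymptotically $h$-expansive. For the second assertion, if $(X,T)$ has finite entropy then so does $(X_T,S)$, since entropy is preserved under both reductions; hence $(X_T,S)$ is lowerable by Theorem \ref{aim of paper}, and lowerability transfers back to $(X,T)$. I expect the main obstacle to be not any single deep step but the bookkeeping of the two constructions: one must verify that passing to $(X',T')$ and then to $(X_T,S)$ preserves \emph{each} of the four properties together with finite entropy simultaneously. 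The crucial leverage in both reductions is that the relevant fibre entropy vanishes, so Theorem \ref{Fe} forces $h$ of a compact set to equal $h$ of its image, which is precisely what makes the lowering and limit-point structure descend and lift cleanly.
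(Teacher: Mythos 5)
Your proposal is correct and follows essentially the same route as the paper's Appendix: first surjectivize via the tower $(X',T')$, then pass to the natural extension $(X_T,S)$, where $\mathrm{diam}(S^n\pi_1^{-1}(x))\to 0$ gives $h_{\text{top}}(S,X_T|\pi_1)=0$, so that Theorem \ref{Fe} transfers the lowering and \HUL\ properties while Lemma \ref{BD22} plus Ledrappier's theorem make $\pi_1$ principal and transfer asymptotic $h$-expansiveness, after which Theorems \ref{ex-general} and \ref{aim of paper} apply to the homeomorphism $(X_T,S)$. Even your level of detail matches the paper's, which likewise leaves the property-transfer for $(X',T')$ as a ``not hard to check'' verification.
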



\begin{thebibliography}{99}

\bibitem{Bo} T. Bogensch\"{u}tz, \emph{Entropy pressure, and a
variational principle for random dynamical systems}, Random Comput.
Dynam. {\bf 1} (1992/93), 99-116.

\bibitem{B}R. Bowen, \emph{Entropy for group endomorphisms and
homogeneous spaces}, {Trans. Amer. Math. Soc. \bf 153} (1971),
401-414.

\bibitem{Bex} R. Bowen, \emph{Entropy-expansive maps},
{Trans. Amer. Math. Soc. \bf 164} (1972), 323-331.

\bibitem{B3} R. Bowen, \emph{Topological entropy for noncompact sets},
{Trans. Amer. Math. Soc. \bf 184} (1973), 125-136.

\bibitem{BD} M. Boyle and T. Downarowicz, \emph{The entropy theory
of symbolic extensions}, {Invent. Math. \bf 156} (2004), 119-161.

\bibitem{BFF} M. Boyle, D. Fiebig and U. Fiebig,
\emph{Residual entropy, conditional entropy and subshift covers},
{Forum Math. \bf 14} (2002), 713-757.

\bibitem{Bu} J. Buzzi, \emph{Intrinsic ergodicity of
smooth interval maps}, {Israel J. Math. \bf 100} (1997), 125-161.

\bibitem{D2} T. Downarowicz, \emph{Entropy of a symbolic
extension of a dynamical system}, {Ergod. Th. $\&$ Dynam. Sys. \bf
21} (2001), 1051-1070.

\bibitem{D1} T. Downarowicz, \emph{Entropy structure}, {J.
Anal. Math. \bf 96} (2005), 57-116.

\bibitem{DS} T. Downarowicz and J. Serafin, \emph{Fiber
entropy and conditional variational principles in compact
non-metrizable spaces}, {Fund. Math. \bf 172} (2002), 217-247.

\bibitem{Fal} K. J. Falconer, \emph{Fractal Geometry --- Mathematical
Foundations and Applications}, Chichester: John Wiley \& Sons, 1990.

\bibitem{F} D. Feng, \emph{Private communication}.

\bibitem{Fu} H. Furstenberg, \emph{Recurrence in Ergodic Theory and
Combinatorial Number Theory}, Princeton Univ. Press, 1981.

\bibitem{G} E. Glasner, \emph{Ergodic Theory via Joinings},
{Mathematical Surveys and Monographs \bf 101}, American Mathematical
Society, 2003.

\bibitem{HYZ} W. Huang, X. Ye and G. H. Zhang, \emph{A local
variational principle for conditional entropy}, {Ergod. Th. $\&$
Dynam. Sys. \bf 26} (2006), 219-245.

\bibitem{Ka} A. Katok, \emph{Lyapunov exponents, entropy
and periodic orbits for diffeomorphisms}, {Inst. Hautes Etudes. Sci.
Publ. Math. \bf 51} (1980), 137-173.

\bibitem{K} Y. Kifer, \emph{Ergodic Theory of Random Transformations}, Progress
in Probability and Statistics {\bf 10}, Birkh\"{a}user Boston, Inc.,
Boston, 1986.

\bibitem{Le} F. Ledrappier, \emph{A variational principle
for the topological conditional entropy}, {Lecture Notes in Math.
\bf 729} (1979), Springer-Verlag, 78-88.

\bibitem{LW1} F. Ledrappier and P.
Walters, \emph{A relativised variational principle for continuous
transformations}, {J. London Math. Soc. (2) \bf 16} (1977), 568-576.


\bibitem{L1} E. Lindenstrauss, \emph{Lowering topological entropy},
{J. Anal. Math. \bf 67} (1995), 231-267.

\bibitem{L2} E. Lindenstrauss,
\emph{Mean dimension, small entropy factors and an embedding
theorem}, {Inst. Hautes Etudes. Sci. Publ. Math. \bf 89} (1999),
227-262.

\bibitem{LW} E. Lindenstrauss and B. Weiss,
\emph{Mean topological dimension}, {Israel J. Math. \bf 115} (2000),
1-24.

\bibitem{M} P. Mattila, \emph{Geometry of Sets and Measures in Euclidean
Spaces, Fractals and Rectifiability}, {Cambridge Studies in Advanced
Mathematics \bf 44}, Cambridge University Press, Cambridge, 1995.

\bibitem{Mi} M. Misiurewicz, \emph{Diffeomorphism without
any measure with maximal entropy}, {Bull. Acad. Pol. Sci. \bf 21}
(1973), 903-910.

\bibitem{Mi1} M. Misiurewicz, \emph{Topological conditional entropy},
{Studia Math. \bf 55} (1976), 175-200.

\bibitem{P} Y. Pesin, \emph{Dimension Theory in Dynamical Systems.
Contemporary Views and Applications}, Chicago Lectures in
Mathematics, The University of Chicago Press, Chicago, 1998.
% ISBN:
%0-226-66222-5; Russian Translation, Moscow--Izevsk, 2002.

\bibitem{RW} D. Richeson and J. Wiseman, \emph{Positively expansive
dynamical systems}, {Topology Appl. \bf 154} (2007), 604-613.

\bibitem{SW} M. Shub and B. Weiss, \emph{Can one always lower
topological entropy}? {Ergod. Th. $\&$ Dynam. Sys. \bf 11} (1991),
535-546.

\bibitem{Wa} P. Walters, \emph{An Introduction to Ergodic Theory},
{Graduate Texts in Mathematics \bf 79}, Springer-Verlag, New
York-Berlin, 1982.

\bibitem{YZ} X. Ye and G. H. Zhang, \emph{Entropy
points and applications}, {Trans. Amer. Math. Soc. \bf 359} (2007),
6167-6186.

%\bibitem{Z} G. H. Zhang, \emph{Relative entropy, asymptotic pairs and
%chaos}, {J. London Math. Soc. (2) \bf 73} (2006), 157-172.
\end{thebibliography}
\end{document}